\newcommand{\C}[1]{{\cal {#1}}}
\newcommand{\tr}{^{\sf T}}
\newcommand{\m}[1]{{\bf{#1}}}
\newtheorem{theorem}{Theorem}[section]
\newtheorem{lemma}[theorem]{Lemma}
\newtheorem{corollary}[theorem]{Corollary}
\newtheorem{remark}{Remark}
\newtheorem{assum}{Assumption}
\theoremstyle{definition}
\numberwithin{equation}{section}
\begin{document}
	\makeatletter

	\title{A Decentralized Primal-Dual Method with Quasi-Newton Tracking}
	\begin{CJK}{UTF8}{gbsn}

		\author[a]{Liping Wang\thanks{E-mail address: wlpmath@nuaa.edu.cn}}
		\author[a]{Hao Wu \thanks{E-mail address: wuhoo104@nuaa.edu.cn}
		}
		\author[b]{Hongchao Zhang\thanks{E-mail address: hozhang@math.lsu.edu}}
		
		\affil[a]{School of Mathematics, Nanjing University of Aeronautics and Astronautics.}
		
		\affil[b]{Department of Mathematics, Louisiana State University.}
		\maketitle

	\end{CJK}
	
	\vspace{2mm}

	\footnotesize{
		\noindent\begin{minipage}{18cm}
			{\bf Abstract:}
			This paper considers the decentralized optimization problem of minimizing a finite sum of strongly convex and twice continuously differentiable functions over a fixed-connected undirected network. A fully decentralized primal-dual method(DPDM) and its generalization(GDPDM), which allows for multiple primal steps per iteration, are proposed. 
			In our methods, both primal and dual updates use second-order information obtained by quasi-Newton techniques which only involve matrix-vector multiplication.
			Specifically, the primal update applies a Jacobi relaxation step using the BFGS approximation for both computation and communication efficiency. 
			The dual update employs a new second-order correction step. 
			We show that the decentralized local primal updating direction on each node asymptotically approaches the centralized quasi-Newton direction. 
			Under proper choice of parameters, GDPDM including DPDM has global linear convergence 
			for solving strongly convex decentralized optimization problems. 
			Our numerical results show both GDPDM and DPDM are very efficient compared with other state-of-the-art methods
			for solving decentralized optimization.
		\end{minipage}
		\\[5mm]
		
		\noindent{\bf Keywords:} {Decentralized optimization, primal-dual method, quasi-Newton method, BFGS update, BB method, global convergence, linear convergence rate.}

		\hbox to14cm{\hrulefill}\par

		%%%%%%%%%%%%%%%%%%%%%%%%%%%%%%%%%%%%%%%%%%%%%%%%%%%%%%%%%%%%%%%%%%%%%%%
		%%%%%%%%%%%%%

\section{INTRODUCTION}
In this paper, we consider the following decentralized optimization problem over an undirected and connected network containing $n$ nodes
\begin{equation}\label{obj_fun1}
	{\m{z}^*} = \arg \mathop {\min }\limits_{\m{z} \in {R^p}} \sum\limits_{i = 1}^n {{f_i}(\m{z})},
\end{equation}
where the local objective function $f_i:R^{p} \rightarrow R$, $i=1,\ldots,n$, on each node is strongly convex and twice continuously differentiable. We denote two nodes as neighbors if they are connected by an
edge. In a decentralized setting, there does not exist one central server to gather local information from all nodes, compute shared global information, and broadcast it back to the whole network. 
Each local function $f_i$ is only known to node $i$. All the nodes collaborate with their neighbors through information exchange (i.e., communication) to finally
obtain the global solution $\m{z}^*$. Decentralized optimization has wide applications including decentralized resources control\cite{fusco2021decentralized}, wireless networks\cite{jeong2022asynchronous}, decentralized machine learning\cite{zhang2022distributed}, power systems\cite{chen2020fully}, federated learning\cite{pillutla2022robust}, etc..

As more important practical applications surge up, the decentralized optimization methods have been extensively studied in recent years, where first-order methods 
gain attention very rapidly due to their simple iterative schemes and low computational cost per iteration.
The decentralized gradient descent(DGD)\cite{nedic2009distributed,yuan2016convergence,zeng2018nonconvex} methods are a class of the most well-known first-order methods for solving decentralized optimization.
However, DGD methods usually converge to the optimal solution only with a diminishing step size, while a small constant stepsize often leads the iterate to a neighborhood of the minimizer\cite{yuan2016convergence}. 
There are many recent works devoted to a constant step size with guaranteed convergence. 
By significantly increasing the number of communication steps, the DGD method \cite{berahas2018balancing} achieves global convergence with R-linear rate. 
To ensure global convergence, EXTRA\cite{shi2015extra,li2020revisiting} as well as its modification, called  NIDS\cite{li2019decentralized},
adopts different mixing matrices at odd and even iterations. 
The method given in  \cite{yuan2018exact} combines the diffusion strategy with an important subclass of left-stochastic
matrices to obtain wider stability region and enhanced performance than EXTRA.
Gradient Tracking(GT) methods\cite{xu2015augmented,qu2017harnessing,nedic2017achieving,zhang2020distributed,song2023optimal} track the global average gradient
to design local search directions at any node.
An effective approach to achieve global convergence is to design the methods in the primal-dual domain.
A general method given in \cite{jakovetic2018unification} unified EXTRA and GT into a primal-dual framework.
A more flexible first-order primal-dual framework is proposed in \cite{mansoori2021flexpd}, where multiple primal steps per iteration are allowed. 
In addition, there are also some special classes of primal-dual methods based on alternating direction approaches \cite{zhu2016quantized,zhang2018admm,mancino2023decentralized}.

Although first-order methods are more simple and easily implementable, their asymptotic convergence speed is often slow for more accurate solutions. 
So, many second-order methods have been proposed recently to accelerate the convergence rate.
Some methods focus on penalized approaches for solving a constrained problem, where a consensus constraint is introduced to reformulate the problem \eqref{obj_fun1}.
NN\cite{mokhtari2016network} uses the Newton's method to solve the penalty problem. DQN\cite{bajovic2017newton} develops a diagonal correction technique to overcome the challenge 
that the Hessian inverse of the penalty function can not be computed in a decentralized way. 
DBFGS\cite{eisen2017decentralized} is a decentralized quasi-Newton method which only uses the local neighbor information.
However, NN, DQN, and DBFGS are inexact penalty methods in the sense that the penalty parameter needs to go to infinity for ensuring global convergence.
ESOM\cite{mokhtari2016decentralized}, PMM-DQN\cite{bajovic2017newton} and PD-QN\cite{eisen2019primal} improve NN, DQN and DBFGS, respectively, in the primal-dual framework.
Newton Tracking(NT)\cite{zhang2021newton} is also developed in the primal-dual domain, where the local search directions asymptotically track the global centralized Newton direction.
A decentralized ADMM \cite{li2022communication}  incorporates BFGS quasi-Newton techniques to improve computation efficiency, and 
the quadratically approximated ADMM given by \cite{mokhtari2016dqm} was improved in \cite{zhang2023decentralized} to promote communication efficiency. 

Among all the previously developed primal-dual methods, there are rarely methods applying second-order information in the dual update.
Moreover, when combining the quasi-Newton techniques into the decentralized methods in the primal-dual framework, some critical issues may arise.
First, the local quasi-Newton matrices generated by the BFGS approximation are not necessarily positive definite 
although the augmented Lagrangian function across all nodes is strongly convex. 
Second, when the primal problem is solved inexactly, the exact dual gradient is not obtained. 
Extracting useful Hessian information from inexact dual gradients also needs to be studied.  
Our goal of this paper is to explore second-order information in both primal and dual domains, and 
propose a new fully decentralized primal-dual quasi-Newton method with both theoretical global linear convergence
and numerical efficiency. 
Our main contributions are as follows. 

%We develop a decentralized primal-dual method containing second-order information in both primal and dual updates while using only matrix-vector products to reduce the computational expense.
\begin{itemize}
	\item [1.]  In the primal domain, we combine the Jacobi relaxation technique with the BFGS approximation for updating the primal iterates.
	Multiple adaptive primal updates per iteration are also allowed in our approach to balance the accuracy obtained in both the primal and dual domains.
	\item [2.] In the dual domain, by applying the Newton's method to the dual problem and making some critical reformulations,
	we obtain a novel dual updating step, which can be viewed as adding a second-order correction term to the usual dual ascent step. 
	To maintain computational efficiency, we apply BB-approximation techniques to capture the spectral information of the dual Hessian. To the best of our knowledge,
	we are the first to give a primal-dual method using quasi-Newton dual update with guaranteed convergence.
	\item [3.] Our proposed method is a quasi-Newton tracking method, where the local search direction on each node tracks the global centralized quasi-Newton direction. 
	Our quasi-Newton tracking can be considered as a generalization of the Newton tracking \cite{zhang2021newton}. 
	In addition, our approaches only involve matrix-vector multiplications to avoid inverting a matrix. 
	The numerical results show that our methods are very efficient compared with other state-of-the-art methods
	for solving decentralized optimization.
\end{itemize}

The paper organized is as follows: In Section~2, we reformulate the problem as a constrained decentralized optimization, and develop our new methods. 
Global convergence as well as linear convergence rate are established in Section~3.
Numerical experiments are performed in Section~4 to compare our method with other well-established first and second-order methods for solving decentralized optimization.
Finally, we draw some conclusions in Section~5.
\subsection{Notation}
We use uppercase and lowercase boldface letters to denote matrices and vectors, respectively. We let $\m{x}_i$ denote the local copy of the global variable $\m{z}$ at node $i$ and define $\mathcal{N}_i$ as the set consisting of the neighbors of node $i$ (we treat node $i$ itself as one of its neighbors for convenience.). $\bar{\m{x}}$ denotes $\frac{1}{n} \sum_{i=1}^n \m{x}_i$. $\mbox{Null}(\m{M})$ denotes the null space of $\m{M}$. We let $\operatorname{span}(\m{v})$ denote the linear subspace spanned by $\m{v}$. Kronecker Product is denoted as $\otimes$. $\operatorname{Proj}_{[a,b]} (\cdot)$ is the projection operator onto interval $[a,b]$. 
Given a symmetric matrix $\m{M}$, ${\lambda _{\min }(\m{M})}$ and ${\lambda _{\max }(\m{M})}$ denote its smallest eigenvalue and largest eigenvalue, respectively, and  
$\lambda_{2}(\m{M})$ denotes its second smallest eigenvalue. The trace and determinant of $\m{M}$ is denoted as  $\operatorname{tr}(\m{M})$ and  $\operatorname{det}(\m{M})$, respectively. 
For any vector $\m{v}$, $\left\|\m{v}\right\|_{\m{M}}^2$ denotes $\m{v}\tr \m{M} \m{v}$.
$\m{A}\tr$ denotes transpose of the matrix $\m{A}$ and $\m{A}^{\dagger}$ denotes its pseudo inverse. 
$\m{M}_1 \succeq \m{M}_2$ means $\m{M}_1 - \m{M}_2$ is positive definite.
We use $\m{I}_p$ to denote the $p \times p$ identity matrix. Especially, $\m{I}$ denotes $\m{I}_{np}$.  
We let $\operatorname{diag}\{a_1,\ldots,a_p\}$ denote a $p \times p$ diagonal matrix whose diagonal elements are $a_1,\ldots,a_p$,
and $\operatorname{log}(\cdot)$ denote $\operatorname{log}_{10}(\cdot)$. 
For any scalar $\theta \in R$, if $\theta \ne 0$, $I_{R \setminus \{0\}} = 1$; otherwise, $I_{R \setminus \{0\}} = 0$.
% $$
%\operatorname{sgn}(a):=\left\{\begin{array}{cl}
	%-1, & \text { if } a< 0, \\
	%	0, & \text { if } a=0, \\
	%	1, & \text { if } a>0.
	%\end{array}\right.
	% $$

	\section{PROBLEM FORMULATION AND ALGORITHM DEVELOPMENT}
	\subsection{Problem Formulation}
	
	Since the network is connected, the constraints $\m{x}_i=\m{x}_j, i,j = 1,\ldots,n$, are equivalent to $\m{x}_i=\m{x}_j, j \in \mathcal{N}_i, i = 1,\ldots,n$. Therefore, the problem \eqref{obj_fun1} can be reformulated as the following constrained optimization 
	\begin{align}\label{obj_fun2}
		\left\{ {\m{x}_i^*} \right\}_{i = 1}^n = &\arg \mathop {\min }\limits_{\left\{ {\m{x}_i} \right\}_{i = 1}^n } \sum\limits_{i = 1}^n {{f_i}({\m{x}_i})},  \\
		&s.t.{\rm{ }}{\m{x}_i} = {\m{x}_j},\forall j \in {\mathcal{N}_i},i = 1,\ldots,n.\notag
	\end{align}
	The consensus constraints ensure the solution of problem \eqref{obj_fun2} is equal to the solution of \eqref{obj_fun1} and $\m{x}_1^*=\m{x}_2^*= \ldots =\m{z}^*$.
	
	To reveal the network structure, we introduce a mixing matrix  $\tilde{\m{W}} \in R^{n \times n}$ which
	has the following standard features.\\
	$\mathbf{1}$. $\tilde{\m{W}}$ is nonnegative and $\tilde{W}_{ij}$ characterizes the active link $(i,j)$, i.e., $\tilde{{W}}_{ij}>0$ if $j \in \mathcal{N}_i$,  $\tilde{{W}}_{ij}=0$ otherwise.\\
	$\mathbf{2}$. $\tilde{\m{W}}$ is symmetric and doubly stochastic, i.e., $\tilde{\m{W}}=\tilde{\m{W}}\tr$ and $\tilde{\m{W}}\m{1}_n=\m{1}_n$.
	
	It has a few common choices for the mixing matrix $\tilde{\m{W}}$, such as Laplacian-based constant edge weight matrix \cite{sayed2014diffusion} and Metropolis constant edge weight matrix \cite{xiao2007distributed}. 
	From the second feature, we have $\mbox{Null}(\m{I}_n-\tilde{\m{W}})=\operatorname{span}(\m{1}_n)$. Using the mixing matrix $\tilde{\m{W}}$, we can rewrite problem \eqref{obj_fun2} in an equivalent compact form. Let us denote
	\begin{align*}
		\m{x} = [{\m{x}_1};{\m{x}_2};\ldots;{\m{x}_n}] \in {R^{np}}, \quad f(\m{x}) = \sum\limits_{i = 1}^n {{f_i}({\m{x}_i})}, \\
		\m{W} = \tilde{\m{W}} \otimes {\m{I}_p} \in {R^{np \times np}} \mbox{ and } \m{I}-\m{W} = {({\m{I}_n} - \tilde{\m{W}})} \otimes {\m{I}_p}.
	\end{align*}
	We can easily know that the equation $(\m{I}-\m{W})\m{x}=\m{0}$ holds if and only if ${\m{x}_1}={\m{x}_2}=\ldots={\m{x}_n}$. According to the Perron-Frobenius theorem \cite{pillai2005perron}, the eigenvalues of $\tilde{\m{W}}$ lie in $(-1,1]$ and 1 is the single eigenvalue. Since $\m{I}-\m{W}$ is symmetric positive semidefinite,  ${(\m{I} - \m{W})^{1/2}}$ has the same null space as $\m{I}-\m{W}$. 
	Hence, problem \eqref{obj_fun2} can be reformulated as
	\begin{align}\label{obj_fun3}
		{\m{x}^*} = &\arg \mathop {\min }\limits_{\m{x} \in {R^{np}}} f(\m{x}),\\
		&s.t.{\rm{ }}{(\m{I} - \m{W})^{1/2}}\m{x} = \m{0}.\notag
	\end{align}
	
	In this paper, we have the following assumptions on the objective function.
	\begin{assum}\label{as1}
		The local gradients $\left\{ \nabla f_i(\m{z}) \right\}_{i = 1}^n $ are Lipschitz continuous with constant $L>0$, i.e.,
		\begin{equation}\label{3.1}
			\left\| {\nabla {f_i}(\m{z}) - \nabla {f_i}(\tilde{\m{z}})} \right\| \le L \left\| {\m{z} - \tilde{\m{z}}} \right\|,
		\end{equation}
		$\forall \m{z},\tilde{\m{z}} \in {R^p},i = 1,\ldots,n$.
	\end{assum}
	
	\begin{assum}\label{as2}
		The local objective functions $\left\{ f_i(\m{z}) \right\}_{i = 1}^n $ are strongly convex with
		modulus $\mu>0$, i.e.,
		\begin{equation}\label{3.2}
			{f_i}(\tilde{\m{z}}) \ge {f_i}(\m{z}) + \nabla {f_i}{(\m{z})\tr}(\tilde{\m{z}}- \m{z}) + \frac{\mu }{2}{\left\| {\tilde{\m{z}} - \m{z}} \right\|^2},
		\end{equation}
		$\forall \m{z},\tilde{\m{z}} \in {R^p},i = 1,\ldots,n$.
	\end{assum}
	Combining \textbf{Assumption \ref{as1}} with \textbf{Assumption \ref{as2}}, we have
	\begin{equation}\label{3.3}
		\mu {\m{I}_p} \preceq {\nabla ^2}{f_i}(\m{z}) \preceq L {\m{I}_p}, \; \forall \m{z} \in {R^p},i = 1,\ldots,n.
	\end{equation}
	Since the Hessian $\nabla^2 f(\m{x})$ is a block diagonal matrix whose $i$-th diagonal block is $\nabla^2 f_i(\m{x}_i)$, the above bounds also hold for $\nabla^2 f(\m{x})$, that is
	\begin{equation}\label{3.4}
		\mu {\m{I}} \preceq {\nabla ^2}f(\m{x}) \preceq L {\m{I}}, \; \forall \m{x} \in {R^{np}}.
	\end{equation}
	
	As discussed previously, primal-dual methods are effective approaches to solving the problem \eqref{obj_fun3}.  
	However, most existing  methods only use first-order information to update the dual variables
	by employing a dual ascent step, such as
	\begin{equation}
		\bm{\lambda}^{t+1}=\bm{\lambda}^{t}+\alpha{{(\m{I} - \m{W})}^{1/2}}\m{x}^{t+1},
	\end{equation}
	where $\bm{\lambda}^{t}$ is the Lagrangian multiplier, also called dual variable. 
	In the following, we propose decentralized primal-dual methods whose primal and dual updates
	will use second-order information approximated by certain quasi-Newton techniques.

	\subsection{Algorithm Development}
	
	The augmented Lagrangian function of the problem \eqref{obj_fun3} is 
	\begin{equation}\label{AL2}
		\tilde{L}_{\alpha}(\m{x},\bm{\lambda}) = f(\m{x}) + \left\langle {\bm{\lambda},{{(\m{I} - \m{W})}^{1/2}}\m{x}} \right\rangle  + \frac{\alpha }{2}{\m{x}\tr}(\m{I} - \m{W})\m{x},
	\end{equation}
	where $\bm{\lambda} = [{\bm{\lambda}_1};{\bm{\lambda}_2};\ldots;{\bm{\lambda}_n}] \in R^{np}$, $\alpha >0 $ is the penalty parameter. \eqref{AL2} can be also viewed as the Lagrangian function of the following 
	penalized optimization
	\begin{align}\label{obj_fun4}
		{\m{x}^*} = &\arg \mathop {\min }\limits_{\m{x} \in {R^{np}}} f(\m{x})+\frac{\alpha}{2}{\m{x}\tr}(\m{I} - \m{W})\m{x},\\
		&s.t.{\rm{ }}{(\m{I} - \m{W})^{1/2}}\m{x} = \m{0}.\notag
	\end{align}
	Obviously, the problem \eqref{obj_fun4} is equivalent to the problem \eqref{obj_fun3}. 
	Let $\m{x}^*(\bm{\lambda})$ denote the minimizer of $\tilde{L}_{\alpha}(\cdot,\bm{\lambda})$, i.e.,
	\begin{equation}\label{pu0}
		\m{x}^*(\bm{\lambda})=\text{arg} \mathop {\min }\limits_{\m{x} \in R^{np}} \tilde{L}_{\alpha}(\m{x},\bm{\lambda}).
	\end{equation}
	The optimality condition of \eqref{pu0} gives
	\begin{equation}\label{le}
		\nabla f(\m{x}^*(\bm{\lambda}))+(\m{I}-\m{W})^{1/2}\bm{\lambda}+\alpha (\m{I}-\m{W})\m{x}^*(\bm{\lambda})=\m{0}.
	\end{equation}
	For any $\bm{\lambda}^{*} \in R^{np}$ that satisfies $(\m{I}-\m{W})^{1/2}\m{x}^*(\bm{\lambda}^*)=\m{0}$, $(\m{x}^*(\bm{\lambda}^*), \bm{\lambda}^*)$ 
	is an primal-dual solution of \eqref{obj_fun4}.
	%\textcolor{red}{If there exists some $\bm{\lambda}^{*} \in R^{np}$ such that $(\m{I}-\m{W})^{1/2}\m{x}^*(\bm{\lambda}^*)=\m{0}$, then $(\m{x}^*(\bm{\lambda}^*), \bm{\lambda}^*)$ 
		%is one primal-dual solution of \eqref{obj_fun4}.}
	We now apply Netwon's method to solve the feasibility system 
	\begin{equation}
		(\m{I}-\m{W})^{1/2}\m{x}^*(\bm{\lambda})=\m{0}.
	\end{equation}
	The $t$-th iteration of Netwon's method  gives
	\begin{align}\label{le1}
		(\m{I}-\m{W})^{1/2}\m{x}^*(\bm{\lambda}^t)+(\m{I}-\m{W})^{1/2} \left(\frac{\partial \m{x}^*(\bm{\lambda})}{\partial \bm{\lambda}} \right)\tr_{\bm{\lambda}=\bm{\lambda}^t} \left(\bm{\lambda}^{t+1}-\bm{\lambda}^t\right)=\m{0}.\notag
	\end{align}
	Differentiating \eqref{le} with respect to $\bm{\lambda}$, we have
	\[
	\left(\nabla^2 f(\m{x}^*(\bm{\lambda}))+\alpha(\m{I}-\m{W})\right)\left(\frac{\partial \m{x}^*(\bm{\lambda})}{\partial \bm{\lambda}} \right)\tr=-(\m{I}-\m{W})^{1/2},
	\]
	which gives
	\begin{equation}\label{le3}
		\left(\frac{\partial \m{x}^*(\bm{\lambda})}{\partial \bm{\lambda}} \right)\tr_{\bm{\lambda}=\bm{\lambda}^t}=-\left[\nabla^2_{\m{x}\m{x}}\tilde{L}_\alpha\left(\m{x}^*(\bm{\lambda}^t), \bm{\lambda}^t\right)\right]^{-1}(\m{I}-\m{W})^{1/2}.
	\end{equation}
	Substituting \eqref{le3} into \eqref{le1} yields
	\begin{align*}
		(\m{I}-\m{W})^{1/2}\m{x}^*(\bm{\lambda}^t)-(\m{I}-\m{W})^{1/2} \left[\nabla^2_{\m{x}\m{x}}\tilde{L}_\alpha\left(\m{x}^*(\bm{\lambda}^t), \bm{\lambda}^t\right)\right]^{-1}
		(\m{I}-\m{W})^{1/2} \left(\bm{\lambda}^{t+1}-\bm{\lambda}^t\right)=\m{0}.
	\end{align*}
	By letting $\m{x}^{t+1}=\m{x}^*(\bm{\lambda}^t)$, we have
	\begin{equation}\label{du1}
		\bm{\lambda}^{t+1}=\bm{\lambda}^t+ \left(\m{G}^{t}\right)^{\dagger} (\m{I}-\m{W})^{1 / 2}\m{x}^{t+1},
	\end{equation}
	where 
	\begin{equation*}
		{\m{G}^{t}} = {(\m{I} - \m{W})^{1/2}}\left[\nabla _{\m{x} \m{x}}^2{\tilde{L}_\alpha }({\m{x}^{t+1}},{\bm{\lambda}^t})\right]^{ - 1}{(\m{I} - \m{W})^{1/2}}.
	\end{equation*}
	Solving the primal problem \eqref{pu0} exactly is not only numerically expensive but also theoretically unnecessary. 
	One practical approach is to simply apply one Newton's iteration to the problem \eqref{pu0} and let 
	\begin{align}\label{pu1}
		\m{x}^{t+1}=\m{x}^t-\left[\nabla_{\m{x}\m{x}}^2{\tilde{L}_\alpha }({\m{x}^t},{\bm{\lambda}^t})\right]^{-1}\Big[\nabla f\left(\m{x}^t\right)
+(\m{I}-\m{W})^{1 / 2} \bm{\lambda}^{t}+\alpha(\m{I}-\m{W}) \m{x}^t\Big].
	\end{align}
	Unfortunately, the iterative schemes \eqref{du1} and \eqref{pu1} can not be directly applied in the decentralized setting.
	Note that ${\nabla ^2}f({\m{x}})$ is a block diagonal matrix whose $i-$th diagonal block is ${\nabla ^2}f_i({\m{x}_i})$ and 
	$\m{W}$ is a block sparse matrix related to the network structure. Thus, the Hessian $\nabla_{\m{x}\m{x}}^2{\tilde{L}_\alpha }({\m{x}},{\bm{\lambda}})$ is neighbor related in the calculation.
	However, the Hessian inverse as well as  $(\m{I}-\m{W})^{1/2}$ will destroy the neighbor relation.
	In addition, it is often very expensive to calculate the inverse of primal and dual Hessians in many practical computations. 
	In the following, we would like to modify  the iterative schemes \eqref{du1} and \eqref{pu1} into a decentralized setting that
	can be also computed in an efficient way. 
	
	\paragraph{Primal update}
	Note that \eqref{pu1} is equivalent to
	\begin{align*}
		\m{x}^{t+1}=\m{x}^t-\m{d}^t,
	\end{align*}
	where $\m{d}^t$ satisfies
	\begin{align}\label{fcz}
		\nabla_{\m{x}\m{x}}^2{\tilde{L}_\alpha }({\m{x}^t},{\bm{\lambda}^t}) \m{d}^t =\nabla_{\m{x}}{\tilde{L}_\alpha }({\m{x}^t},{\bm{\lambda}^t}).
	\end{align}
	We would solve the linear equations \eqref{fcz} inexactly by applying the Jacobi relaxation technique \cite{young2014iterative}, which is an effective technique for the iterative method. 
	By splitting the Hessian $\nabla_{\m{x}\m{x}}^2{\tilde{L}_\alpha }({\m{x}^t},{\bm{\lambda}^t})$ into two parts, $\nabla^2f(\m{x}^t)$ and $\alpha(\m{I}-\m{W})$, 
	one Jacobi relaxation iteration can be conducted by
	\begin{align*}
		\m{d}^{t,k+1}
		=\theta \left(-\alpha\nabla^2f(\m{x}^t)^{-1}(\m{I}-\m{W})\m{d}^{t,k}+\nabla^2f(\m{x}^t)^{-1} 
		\nabla_x\tilde{L}_\alpha (\m{x}^t,\bm{\lambda}^t) \right)
		+(1-\theta)\m{d}^{t,k}.
	\end{align*}
	where $\theta \in [0,1]$ is the relaxation parameter, introduced to accelerate the original Jacobi iteration. 
	For both computation and communication efficiency, our algorithm would only employ one Jacobi iteration as the following: 
	\begin{align*}
		\m{d}^{t,1}=\theta(-\alpha\nabla^2f(\m{x}^t)^{-1}(\m{I}-\m{W})\m{d}^{t,0}+\nabla^2f(\m{x}^t)^{-1}{\tilde{L}_\alpha }({\m{x}^t},{\bm{\lambda}^t}))
		+(1-\theta)\m{d}^{t,0},
	\end{align*}
	where 
	\begin{align*}
		\m{d}^{t,0}=\nabla^2f(\m{x}^t)^{-1}\nabla_{\m{x}}{\tilde{L}_\alpha }({\m{x}^t},{\bm{\lambda}^t}),
	\end{align*}
	and let $\m{d}^{t}=\m{d}^{t,1}$. Combining the above steps, we obtain
	\begin{align}\label{Ja}
		\m{d}^{t}=\left(\m{I}-\theta\alpha\nabla^2f(\m{x}^t)^{-1}(\m{I}-\m{W})\right)\nabla^2f(\m{x}^t)^{-1}\nabla_{\m{x}}{\tilde{L}_\alpha }({\m{x}^t},{\bm{\lambda}^t}).
	\end{align}
	Note that if $\theta=0$, \eqref{Ja} reduces to
	\begin{align}\label{dNT}
		\m{d}^{t}=\nabla^2f(\m{x}^t)^{-1}\nabla_{\m{x}}{\tilde{L}_\alpha }({\m{x}^t},{\bm{\lambda}^t}),
	\end{align}
	which is exactly primal update direction in NT\cite{zhang2021newton} with the parameter $\epsilon=0$.
	To avoid computing ${\nabla ^2}f({\m{x}}^t)$ and its inverse, we would approximate ${\nabla ^2}f({\m{x}}^t)$ by a block positive definite matrix $\m{B}^{t}$
	using BFGS quasi-Newton techniques. 
	Then, by  introducing a stepsize $\beta>0$ for global convergence, we have the following primal update,
	\begin{align}\label{pu11}
		\m{x}^{t+1}=\m{x}^{t}-\beta\left[\m{I}-\theta\alpha(\m{B}^t)^{-1}(\m{I}-\m{W})\right](\m{B}^t)^{-1}\nabla_{\m{x}}{\tilde{L}_\alpha }({\m{x}^t},{\bm{\lambda}^t}).
	\end{align}
	\iffalse
	Here, we need $\m{I}-\theta\alpha(\m{B}^t)^{-1}(\m{I}-\m{W}) \succ 0$ for any $t \geq 0$, which implies $\theta < \min \left\{\frac{\hat{\lambda}}{\alpha \lambda_{\max}(\m{I}-\m{W})},1\right\}$, where $\hat{\lambda}=\min\{\lambda_{\min}(\m{B}^k)|k=0,1,\ldots\}$.
	\fi
	More specifically, we update $\m{B}^t$ by the following BFGS formula
	\begin{equation}\label{B}
		{\m{B}^{t+1}} = \left[ {\begin{array}{*{20}{c}}
				{{ \m{B}_1^{t + 1}}
				}&{}&{}\\
				{}&{...}&{}\\
				{}&{}&{\m{B}_n^{t+1}}
		\end{array}} \right],
	\end{equation}
	where we set $\m{B}_i^0=\m{I}_p$, 
	\begin{equation}\label{Bit}
		\m{B}_i^{t + 1} = \m{B}_i^t - \frac{{\m{B}_i^t\m{s}_i^t{{(\m{s}_i^t)}\tr}\m{B}_i^t}}{{{(\m{s}_i^t)}\tr}\m{B}_i^t\m{s}_i^t} + \frac{{\m{y}_i^t{{(\m{y}_i^t)}\tr}}}{{{(\m{s}_i^t)}\tr}\m{y}_i^t}, \quad t \ge 0,
	\end{equation}
	$\m{s}_i^t = \m{x}_i^{t + 1} - \m{x}_i^t$ and $\m{y}_i^t = \nabla {f_i}( {\m{x}_i^{t + 1}} ) - \nabla {f_i}( {\m{x}_i^t} )$.
	
	We have some comments about $\m{B}^t$. First, since $f_i$ is strongly convex, we have $ (\m{s}_i^t) \tr \m{y}_i^t > 0$,
	which together with the positive definite initialization of $\m{B}_i^0$ ensures $\m{B}^t \succ \m{0}$ for all $t \ge 0$.
	Second, as usual, in practical implementation we always directly update $\m{H}^t := ({\m{B}^t})^{ - 1}$. 
	In particular, using the BFGS inversion formula, we would have 
	$\m{H}_i^{0}=({\m{B}_i^0})^{ - 1}$ and
	\begin{align}\label{Hit}
		\m{H}_i^{t+1}=\m{H}_i^{t}-\frac{\m{H}_i^{t} {\m{y}}_i^t (\m{s}_i^t)\tr+\m{s}_i^t ({\m{y}}_i^t)\tr \m{H}_i^{t}}{(\m{s}_i^t)\tr {\m{y}}_i^t}
		+\left(1+\frac{({\m{y}}_i^t)\tr \m{H}_i^{t} {\m{y}}_i^t}{(\m{s}_i^t)\tr {\m{y}}_i^t}\right)\frac{ \m{s}_i^t(\m{s}_i^t)\tr}{(\m{s}_i^t)\tr {\m{y}}_i^t},\quad t \ge 0.
	\end{align}
	Note that $\m{H}^t$ is applied in algorithm implementation while $\m{B}^t$ is only 
	used in the paper for convenient theoretical analysis.
	
	Finally, we want to emphasize that it is improper to use BFGS techniques to directly approximate $\nabla_{\m{x}\m{x}}^2 \tilde{L}_{\alpha}(\m{x}^t,\bm{\lambda}^t) =
	\nabla^2 f(\m{x}^t) + \alpha (\m{I}-\m{W})$ in decentralized setting. Although $\m{x}\tr(\m{I}-\m{W})\m{x} \ge 0$ for all $\m{x}$, its restriction on each node, i.e.,    
	$(\m{x}_i)\tr\left( {\m{x}_i - \sum_{j \in {{\cal N}_i}} {{\tilde{W}_{ij}}} \m{x}_j} \right)$ may not be positive. This may destroy the positive definite property of BFGS matrices.

	%Firstly, can we directly use $\m{B}^t$ to approximate $\nabla_{\m{x}\m{x}}^2 \tilde{L}_{\alpha}(\m{x}^t,\bm{\lambda}^t)$ in \eqref{pu1}? We emphasize that employing $\m{B}^t$ as the approximation of $\nabla^2 f(\m{x}^t)$ 
	%is more proper. The semi-definite property of $\m{I}-\m{W}$ yields $\m{x}\tr(\m{I}-\m{W})\m{x} \ge 0$ for all $\m{x} \in R^{np}$. However, $(\m{x}_i)\tr\left( {\m{x}_i - \sum_{j \in {{\cal N}_i}} {{\tilde{W}_{ij}}} \m{x}_j}
	% \right)$ calculated on node $i$ may be smaller than 0. The deficiency of semi-definite property in local nodes would cause ${{(\m{s}_i^t)}\tr}\m{y}_i^t$ may be smaller than 0 if $\m{y}_i^t$ is the difference of gradient 
	%of the augmented Lagrangian function, i.e., $\nabla {f_i}( {\m{x}_i^{t + 1}} ) - \nabla {f_i}( {\m{x}_i^t} )+\alpha\left( {\m{s}_i^t - \sum_{j \in {{\cal N}_i}} {{\tilde{W}_{ij}}} \m{s}_j^t} \right)$.
	% This makes $\m{B}_i^t$ have some negative eigenvalues, which may lead to the divergence of our algorithm. 

	\paragraph{Dual update}
	We first give the following lemma, which can be considered as an extension of
	\textbf{Lemma 1} in \cite{yuan2014analysis}.
	\begin{lemma}\label{lem3}
		Let $\m{A} \in R^{m \times n}$ and $\operatorname{rank}(\m{A})=r$ where $r \le m \le n$. Let $\m{M} \in R^{n \times n} $ be symmetric positive definite. Then
		\begin{equation}\label{lem2.2 eq1}
			\left[\m{A}(\m{M}+\m{A}\tr\m{A})^{-1}\m{A}\tr\right]^{\dagger}\m{A}=\left[\m{I}+(\m{A}\m{M}^{-1}\m{A}\tr)^{\dagger}\right]\m{A},
		\end{equation}
		which further implies
		\begin{equation}\label{lem2.2 eq3}
			\left[\m{A}(\m{M}+\m{A}\tr\m{N}\m{A})^{-1}\m{A}\tr\right]^{\dagger}\m{A}=\left[\m{N}+(\m{A}\m{M}^{-1}\m{A}\tr)^{\dagger}\right]\m{A},
		\end{equation}
		where $\m{N} \in R^{m \times m} $ is any symmetric positive definite matrix.
	\end{lemma}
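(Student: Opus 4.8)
The plan is to prove \eqref{lem2.2 eq1} first and then derive \eqref{lem2.2 eq3} as an immediate consequence by a substitution trick. For \eqref{lem2.2 eq1}, I would begin with a singular value decomposition of $\m{A}$, writing $\m{A} = \m{U}\begin{bmatrix}\m{\Sigma} & \m{0}\end{bmatrix}\m{V}\tr$ where $\m{U}\in R^{m\times m}$ and $\m{V}\in R^{n\times n}$ are orthogonal and $\m{\Sigma}\in R^{r\times r}$ is diagonal and invertible (after arranging so the trailing singular values are zero; since $\operatorname{rank}(\m{A})=r$, padding with zero rows/columns as needed). The key point is that the identity is invariant under the orthogonal change of coordinates $\m{A}\mapsto\m{U}\tr\m{A}\m{V}$, $\m{M}\mapsto\m{V}\tr\m{M}\m{V}$, so it suffices to verify it when $\m{A}=\begin{bmatrix}\m{\Sigma} & \m{0}\end{bmatrix}$ (with the bottom $m-r$ rows zero), using that pseudoinverse, transpose, and inverse all commute appropriately with conjugation by orthogonal matrices.

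With $\m{A}$ in this reduced form, both sides become block computations. I would partition $\m{M}^{-1}$ (or $\m{M}$) conformally with the split $n = r + (n-r)$. The right-hand side is straightforward: $\m{A}\m{M}^{-1}\m{A}\tr$ is essentially $\m{\Sigma}(\m{M}^{-1})_{11}\m{\Sigma}$ padded by zeros, its pseudoinverse inverts the nonzero block, and then $[\m{I}+(\m{A}\m{M}^{-1}\m{A}\tr)^{\dagger}]\m{A}$ is explicit. For the left-hand side I would use the Schur-complement formula for the relevant block of $(\m{M}+\m{A}\tr\m{A})^{-1}$ — note $\m{A}\tr\m{A}$ is $\operatorname{diag}(\m{\Sigma}^2,\m{0})$ — to evaluate $\m{A}(\m{M}+\m{A}\tr\m{A})^{-1}\m{A}\tr$, then take its pseudoinverse and right-multiply by $\m{A}$. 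The matrix-inversion lemma (Sherman–Morrison–Woodbury) applied to $(\m{M}+\m{A}\tr\m{A})^{-1}$ is the cleanest route: it yields $\m{A}(\m{M}+\m{A}\tr\m{A})^{-1}\m{A}\tr = \m{A}\m{M}^{-1}\m{A}\tr - \m{A}\m{M}^{-1}\m{A}\tr(\m{I}+\m{A}\m{M}^{-1}\m{A}\tr)^{-1}\m{A}\m{M}^{-1}\m{A}\tr$, which, writing $\m{P}:=\m{A}\m{M}^{-1}\m{A}\tr$, is $\m{P}(\m{I}+\m{P})^{-1}\m{P}$. Since $\m{P}$ is symmetric PSD of rank $r$ with $\m{I}+\m{P}$ invertible, $[\m{P}(\m{I}+\m{P})^{-1}\m{P}]^{\dagger} = \m{P}^{\dagger}(\m{I}+\m{P})\m{P}^{\dagger}$ by diagonalizing $\m{P}$, and multiplying by $\m{A}$ while using $\m{P}^{\dagger}\m{P}\m{A}=\m{A}$ (the range of $\m{A}$ equals the range of $\m{P}$, since $\m{M}^{-1}\succ\m{0}$) collapses this to $\m{P}^{\dagger}(\m{I}+\m{P})\m{A} = (\m{P}^{\dagger}+\m{I})\m{A}$, which is \eqref{lem2.2 eq1}.

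For \eqref{lem2.2 eq3}, I would substitute $\m{A}\leftarrow \m{N}^{1/2}\m{A}$ into \eqref{lem2.2 eq1}: then $\m{A}\tr\m{A}$ becomes $\m{A}\tr\m{N}\m{A}$, $\m{A}\m{M}^{-1}\m{A}\tr$ becomes $\m{N}^{1/2}\m{A}\m{M}^{-1}\m{A}\tr\m{N}^{1/2}$, and after using $(\m{N}^{1/2}\m{C}\m{N}^{1/2})^{\dagger}$-type manipulations together with the fact that pre/post-multiplying by the invertible $\m{N}^{1/2}$ interacts cleanly with the relevant pseudoinverses (again via range considerations, since $\operatorname{range}(\m{N}^{1/2}\m{A})=\m{N}^{1/2}\operatorname{range}(\m{A})$), one cancels the $\m{N}^{1/2}$ factors and recovers \eqref{lem2.2 eq3} with the stated $\m{N}$ term. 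I expect the main obstacle to be the careful handling of pseudoinverses of products: identities like $(\m{XYX}\tr)^{\dagger}=\m{X}^{\dagger\tr}\m{Y}^{\dagger}\m{X}^{\dagger}$ are false in general, so I must lean on the specific structure — $\m{P}$ symmetric PSD, $\m{I}+\m{P}$ invertible, and $\operatorname{range}(\m{A}\m{M}^{-1}\m{A}\tr)=\operatorname{range}(\m{A})$ because $\m{M}\succ\m{0}$ — and it is exactly this range equality and the commuting spectral decomposition of $\m{P}$ that make every pseudoinverse step legitimate. Keeping track of which identities need $\m{M}$ positive definite (as opposed to merely invertible) is the delicate bookkeeping, and I would state the needed auxiliary fact ($\m{P}^\dagger\m{P}\m{A}=\m{A}$) as a small preliminary observation before the main computation.
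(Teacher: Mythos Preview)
Your overall strategy is correct and genuinely different from the paper's. The paper works entirely in SVD coordinates: it writes $\m{A}=\m{U}\m{S}\m{V}\tr$, expands both sides of \eqref{lem2.2 eq1} explicitly, and uses a block Schur-complement identity (their Lemma~2.2, the $2\times 2$ block-inverse formula) to reduce each side to the same expression; no Woodbury identity appears. Your route via Woodbury is cleaner and essentially coordinate-free: once you set $\m{P}=\m{A}\m{M}^{-1}\m{A}\tr$, everything happens in the spectral calculus of the single symmetric PSD matrix $\m{P}$, and the only structural fact you need is $\operatorname{range}(\m{P})=\operatorname{range}(\m{A})$, i.e.\ $\m{P}\m{P}^{\dagger}\m{A}=\m{A}$. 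The paper's SVD approach is more mechanical but makes the rank-deficiency bookkeeping very explicit; yours is shorter but requires the reader to trust the range argument.

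One algebraic slip to fix: from Woodbury you get $\m{P}-\m{P}(\m{I}+\m{P})^{-1}\m{P}=\m{P}(\m{I}+\m{P})^{-1}$, \emph{not} $\m{P}(\m{I}+\m{P})^{-1}\m{P}$ (check with a scalar: $p-p^2/(1+p)=p/(1+p)$). With the correct expression the rest goes through cleanly: $[\m{P}(\m{I}+\m{P})^{-1}]^{\dagger}=(\m{I}+\m{P})\m{P}^{\dagger}=\m{P}^{\dagger}+\m{P}\m{P}^{\dagger}$, and then $(\m{P}^{\dagger}+\m{P}\m{P}^{\dagger})\m{A}=(\m{P}^{\dagger}+\m{I})\m{A}$ using $\m{P}\m{P}^{\dagger}\m{A}=\m{A}$. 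Your written chain (via $\m{P}^{\dagger}(\m{I}+\m{P})\m{P}^{\dagger}\m{A}$) would need $\m{P}^{\dagger}\m{A}=\m{A}$, which is false. For \eqref{lem2.2 eq3}, both you and the paper use the substitution $\m{A}\leftarrow\m{N}^{1/2}\m{A}$; the paper simply asserts it is ``easy,'' while you are right that the pseudoinverse-of-product identities must be justified via range considerations --- note in particular that $(\m{N}^{1/2}\m{C}\m{N}^{1/2})^{\dagger}\ne\m{N}^{-1/2}\m{C}^{\dagger}\m{N}^{-1/2}$ in general, so the cancellation only works \emph{after} right-multiplying by $\m{A}$; alternatively you can bypass this by applying Woodbury directly to $\m{M}+\m{A}\tr\m{N}\m{A}$ and repeating your argument with $\m{N}^{-1}+\m{P}$ in place of $\m{I}+\m{P}$.
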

	\begin{proof}
			To prove \textbf{Lemma \ref{lem3}}, we need the following result in \cite{yuan2014analysis}.
		\begin{lemma}\label{lem2}\cite{yuan2014analysis}
			Let the block matrix
			\begin{equation*}
				\m{M}=\left(\begin{array}{ll}
					\m{M}_{11} & \m{M}_{12} \\
					\m{M}_{21} & \m{M}_{22}
				\end{array}\right) \in {R}^{n \times n}
			\end{equation*}
			be nonsingular, where $\m{M}_{11}, \m{M}_{11}, \m{M}_{21}$ and $\m{M}_{22}$ are matrices in ${R}^{m \times m}, {R}^{m \times(n-m)}, {R}^{(n-m) \times m}$ and ${R}^{(n-m) \times(n-m)}$ respectively. If $\m{M}_{22}$ is also nonsingular, then the matrix $\m{M}_{11}-\m{M}_{12} \m{M}_{22}^{-1} \m{M}_{21}$ is also nonsingular, and moreover, the block representation of $\m{M}^{-1}$ can be written as
			\begin{equation*}
				\m{M}^{-1}=\left(\begin{array}{ll}
					\m{M}_{11} & \m{M}_{12} \\
					\m{M}_{21} & \m{M}_{22}
				\end{array}\right)^{-1}=\left(\begin{array}{ll}
					\m{Q}_{11} & \m{Q}_{12} \\
					\m{Q}_{21} & \m{Q}_{22}
				\end{array}\right)
			\end{equation*}
			such that
			\begin{equation*}
				\m{Q}_{11}=\left(\m{M}_{11}-\m{M}_{12} \m{M}_{22}^{-1} \m{M}_{21}\right)^{-1} .
			\end{equation*}
		\end{lemma}
		
		We now prove \textbf{Lemma \ref{lem3}}.
		By singular value decomposition(SVD) on \m{A}, \m{A} can be represented as $\m{U}\m{S}\m{V}\tr$ where $\m{U} \in R^{m \times m}$, $\m{V} \in R^{n \times n}$ are orthogonal matrices and $\m{S} \in R^{m \times n}$. The left-hand side of \eqref{lem2.2 eq1} is equal to
		\begin{align}\label{lem2.2 eq2}
			\left[\m{U}\m{S}\m{V}\tr(\m{M}+\m{V}\m{S}\tr\m{U}\tr\m{U}\m{S}\m{V}\tr)^{-1}\m{V}\m{S}\tr\m{U}\tr\right]^{\dagger}\m{A} 
			=\left[\m{U}\m{S}(\m{V}\tr\m{M}\m{V}+\m{S}\tr\m{S})^{-1}\m{S}\tr\m{U}\tr\right]^{\dagger}\m{A}.
		\end{align}
		Decompose 
		\begin{equation*}
			\m{U}=\left[\m{U}_1,\m{U}_2\right], \m{U}_1 \in R^{m \times r},\m{U}_2 \in R^{m \times (m-r)},
		\end{equation*}
		\begin{equation*}
			\m{V}=\left[\m{V}_1,\m{V}_2\right], \m{V}_1 \in R^{n \times r},\m{V}_2 \in R^{n \times (n-r)},
		\end{equation*}
		\begin{equation*}
			\m{S}=\left(\begin{array}{ll}
				\m{S}_{1} & \m{0} \\
				\m{0} & \m{0}
			\end{array}\right), \m{S}_{1} \in {R}^{r \times r} \text{ is a diagonal matrix.}
		\end{equation*}
		Then \eqref{lem2.2 eq2} can be written as
		\begin{align*}
			\Bigg\{ \left(\begin{array}{ll}
				\m{U}_1\m{S}_1 & \m{0} 
			\end{array}\right)\bigg[\left(\begin{array}{ll}
				\m{V}_1\tr\m{M}\m{V}_1 & \m{V}_1\tr\m{M}\m{V}_2 \\
				\m{V}_2\tr\m{M}\m{V}_1 & \m{V}_2\tr\m{M}\m{V}_2
			\end{array}\right)
			+\left(\begin{array}{ll}
				\m{S}_1\tr\m{S}_1 & \m{0} \\
				\m{0} & \m{0}
			\end{array}\right)\bigg]^{-1}\left(\begin{array}{ll}
				\m{S}_1\tr\m{U}_1\tr \\
				\m{0} 
			\end{array}\right)\Bigg\}^{\dagger}\m{A}.
		\end{align*}
		By applying \textbf{Lemma \ref{lem2}}, the above term can be simplified as
		\begin{align*}
			&\Big[\m{U}_1\m{S}_1\big(\m{S}_1\m{S}_1+\m{V}_1\tr\m{M}\m{V}_1-\m{V}_1\tr\m{M}\m{V}_2(\m{V}_2\tr\m{M}\m{V}_2)^{-1}\m{V}_2\tr\m{M}\m{V}_1\big)^{-1}\m{S}_1\tr\m{U}_1\tr\Big]^{\dagger}\m{A}\\
			=&\Big\{\m{U}_1\big[\m{I}+\m{S}_1^{-1}(\m{V}_1\tr\m{M}\m{V}_1-\m{V}_1\tr\m{M}\m{V}_2(\m{V}_2\tr\m{M}\m{V}_2)^{-1}\m{V}_2\tr\m{M}\m{V}_1)\m{S}_1^{-1}\big]^{-1}\m{U}_1\tr\Big\}^{\dagger}\m{A}\\
			=&\Big[\m{U}_1\m{U}_1\tr+\m{U}_1\m{S}_1^{-1}\big(\m{V}_1\tr\m{M}\m{V}_1-\m{V}_1\tr\m{M}\m{V}_2(\m{V}_2\tr\m{M}\m{V}_2)^{-1}\m{V}_2\tr\m{M}\m{V}_1\big)\m{S}_1^{-1}\m{U}_1\tr\Big]\m{A}\\
			=&\m{A}+\m{U}_1\m{S}_1^{-1}\big[\m{V}_1\tr\m{M}\m{V}_1 -\m{V}_1\tr\m{M}\m{V}_2(\m{V}_2\tr\m{M}\m{V}_2)^{-1}\m{V}_2\tr\m{M}\m{V}_1\big]\m{V}_1.
		\end{align*}
		The right-hand side of \eqref{lem2.2 eq1} can be rewritten as
		\begin{align*}
			\left[\m{I}+(\m{U}_1\m{S}_1\m{V}_1\tr\m{M}^{-1}\m{V}_1\m{S}_1\tr\m{U}_1\tr)^{\dagger}\right]\m{A}
			=\left[\m{I}+\m{U}_1\m{S}_1^{-1}(\m{V}_1\tr\m{M}^{-1}\m{V}_1)^{-1}\m{S}_1^{-1}\m{U}_1\tr\right]\m{A}.
		\end{align*}
		By applying \textbf{Lemma \ref{lem2}}, the above term can be simplified as
		\begin{align*}\notag
			&\left[\m{I}+\m{U}_1\m{S}_1^{-1}(\m{V}_1\tr\m{M}^{-1}\m{V}_1)^{-1}\m{S}_1^{-1}\m{U}_1\tr\right]\m{A}\\
			=&\Big[\m{I}+\m{U}_1\m{S}_1^{-1}\big(\m{V}_1\tr\m{M}\m{V}_1-\m{V}_1\tr\m{M}\m{V}_2(\m{V}_2\tr\m{M}\m{V}_2)^{-1}\m{V}_2\tr\m{M}\m{V}_1\big)\m{S}_1^{-1}\m{U}_1\tr\Big]\m{A}\\
			=&\m{A}+\m{U}_1\m{S}_1^{-1}\big[\m{V}_1\tr\m{M}\m{V}_1  -\m{V}_1\tr\m{M}\m{V}_2(\m{V}_2\tr\m{M}\m{V}_2)^{-1}\m{V}_2\tr\m{M}\m{V}_1\big]\m{V}_1,
		\end{align*}
		which shows \eqref{lem2.2 eq1} holds. Taking a preconditioning manner $\m{B}=\m{N}^{1/2}\m{A}$ and using the result \eqref{lem2.2 eq1}, we easily have \eqref{lem2.2 eq3} holds.
	\end{proof}

	\begin{lemma}\label{lemma2.3}
		Let $\m{A} \in R^{m \times n}$ and $\operatorname{rank}(\m{A})=r$ where $r \le m \le n$. Let $\m{M} \in R^{n \times n} $ and $\m{N} \in R^{m \times m} $ be any symmetric positive definite matrices. Then
		\begin{equation}\label{lemma2.3 eq1}
			\lim\limits_{\epsilon \to 0} \m{A}(\epsilon \m{M}+\m{A}\tr\m{N}\m{A})^{-1}\m{A}\tr=\m{A}(\m{A}\tr\m{N}\m{A})^{\dagger}\m{A}\tr.
		\end{equation}
	\end{lemma}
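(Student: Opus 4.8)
The plan is to collapse \eqref{lemma2.3 eq1} to a scalar computation on eigenvalues by two successive preconditioning steps followed by an SVD, mirroring the proof of Lemma~\ref{lem3}. All quantities introduced below will have rank $r$ because the preconditioners are invertible.

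\emph{Step 1: remove $\m{N}$.} Put $\m{B}=\m{N}^{1/2}\m{A}$, so that $\operatorname{rank}(\m{B})=r$, $\m{A}\tr\m{N}\m{A}=\m{B}\tr\m{B}$ and $\m{A}=\m{N}^{-1/2}\m{B}$. Both sides of \eqref{lemma2.3 eq1} are then of the form $\m{N}^{-1/2}(\cdot)\m{N}^{-1/2}$ with the same bracketed matrices as in the case $\m{N}=\m{I}$; hence it suffices to show $\lim_{\epsilon\to0}\m{B}(\epsilon\m{M}+\m{B}\tr\m{B})^{-1}\m{B}\tr=\m{B}(\m{B}\tr\m{B})^{\dagger}\m{B}\tr$.

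\emph{Step 2: remove $\m{M}$.} Write $\m{M}=\m{M}^{1/2}\m{M}^{1/2}$ and set $\m{D}=\m{B}\m{M}^{-1/2}$. Conjugating by $\m{M}^{-1/2}$ gives $\m{M}^{-1/2}(\epsilon\m{M}+\m{B}\tr\m{B})\m{M}^{-1/2}=\epsilon\m{I}+\m{D}\tr\m{D}$, whence $\m{B}(\epsilon\m{M}+\m{B}\tr\m{B})^{-1}\m{B}\tr=\m{D}(\epsilon\m{I}+\m{D}\tr\m{D})^{-1}\m{D}\tr$; and since $\m{M}^{-1/2}$ is invertible, $\operatorname{Range}(\m{D})=\operatorname{Range}(\m{B})$, so $\m{D}(\m{D}\tr\m{D})^{\dagger}\m{D}\tr=\m{B}(\m{B}\tr\m{B})^{\dagger}\m{B}\tr$ (both equal the orthogonal projector onto $\operatorname{Range}(\m{B})$).

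\emph{Step 3: SVD and the limit.} Take $\m{D}=\m{U}\m{S}\m{V}\tr$; writing $\sigma_1,\dots,\sigma_m$ for the singular values of $\m{D}$ (exactly $r$ of them nonzero), one gets $\m{D}(\epsilon\m{I}+\m{D}\tr\m{D})^{-1}\m{D}\tr=\m{U}\operatorname{diag}\!\big(\tfrac{\sigma_1^2}{\epsilon+\sigma_1^2},\dots,\tfrac{\sigma_m^2}{\epsilon+\sigma_m^2}\big)\m{U}\tr$, which is well defined for all small $\epsilon\ne0$. Each diagonal entry tends to $1$ if $\sigma_i\ne0$ and equals $0$ if $\sigma_i=0$, so as $\epsilon\to0$ the matrix converges to $\m{U}\operatorname{diag}(\m{I}_r,\m{0})\m{U}\tr=\m{U}_1\m{U}_1\tr$, which is precisely $\m{D}(\m{D}\tr\m{D})^{\dagger}\m{D}\tr$. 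Unwinding Steps~2 and~1 then yields \eqref{lemma2.3 eq1}.

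\emph{Main obstacle.} There is nothing deep here; the care needed is purely bookkeeping: (i) verifying the two preconditioners preserve rank and range so the pseudoinverses transform as claimed, and (ii) checking that $\epsilon\m{M}+\m{B}\tr\m{B}$ (equivalently $\epsilon\m{I}+\m{D}\tr\m{D}$) is nonsingular for all sufficiently small $\epsilon\ne0$, so the limit is meaningful. As an alternative one could keep only Step~1 and, exactly as in the proof of Lemma~\ref{lem3}, apply the block‑inverse identity of Lemma~\ref{lem2} to $\epsilon\,\m{V}\tr\m{M}\m{V}+\m{S}\tr\m{S}$: its $(1,1)$‑block is $\big(\m{S}_1^2+\epsilon\,\mathrm{Sch}(\epsilon)\big)^{-1}$ with $\mathrm{Sch}(\epsilon)$ the relevant Schur complement, which tends to $\m{S}_1^{-2}$ as $\epsilon\to0$, giving the same conclusion; the double‑preconditioning route above is simply the shortest.
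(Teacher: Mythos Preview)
Your proof is correct. The double preconditioning in Steps~1--2 reduces the identity to the elementary diagonal computation of Step~3, and the bookkeeping items you flag (rank preservation, equality of ranges so that $\m{D}(\m{D}\tr\m{D})^{\dagger}\m{D}\tr=\m{B}(\m{B}\tr\m{B})^{\dagger}\m{B}\tr$, and invertibility of $\epsilon\m{I}+\m{D}\tr\m{D}$ for small $\epsilon\neq0$) are all straightforward.

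The paper takes the route you sketch as the alternative: it applies the SVD directly to $\m{A}$ (not to the preconditioned matrix) and then must reduce $\m{U}\m{S}\bigl(\epsilon\,\m{V}\tr\m{M}\m{V}+\m{S}\tr\m{U}\tr\m{N}\m{U}\m{S}\bigr)^{-1}\m{S}\tr\m{U}\tr$ to an $r\times r$ inverse via the block/Schur-complement identity of Lemma~\ref{lem2}, after which the $\epsilon$-term drops out in the limit. Your preconditioning trick buys a genuinely shorter argument: by absorbing $\m{N}$ and $\m{M}$ into the matrix before the SVD, the block-inverse step disappears entirely and the limit becomes a scalar computation on singular values. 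The paper's route, by contrast, keeps $\m{N}$ and $\m{M}$ visible throughout and relies on Lemma~\ref{lem2} to isolate the leading $r\times r$ block. Both arrive at the same projector $\m{U}_1(\m{U}_1\tr\m{N}\m{U}_1)^{-1}\m{U}_1\tr$ (with $\m{U}_1$ the leading $r$ left singular vectors of $\m{A}$), but yours is the cleaner path.
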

	\begin{proof}
		
		Employ SVD on \m{A} as the proof of \textbf{Lemma \ref{lem3}}. The left-hand side of \eqref{lemma2.3 eq1} can be rewritten as
		\begin{align*}
			&\lim\limits_{\epsilon \to 0}\m{U}_1\m{S}_1\left(\m{S}_1\tr\m{U}_1\tr\m{N}\m{U}_1\m{S}_1+\epsilon\m{V}_1\tr\m{M}\m{V}_1\right)^{-1}\m{S}_1\tr\m{U}_1\tr\\
			=& \lim\limits_{\epsilon \to 0}\m{U}_1\left(\m{U}_1\tr\m{N}\m{U}_1+\epsilon\m{S}_1^{-1}\m{V}_1\tr\m{M}\m{V}_1\m{S}_1^{-1}\right)^{-1}\m{U}_1\tr\notag\\
			=&\m{U}_1\m{U}_1\tr\m{N}^{-1}\m{U}_1\m{U}_1\tr.\notag
		\end{align*}
		The right-hand side of \eqref{lemma2.3 eq1} can be rewritten as
		\begin{align*}
			\m{U}_1\m{S}_1\m{V}_1\tr\left(\m{V}_1\m{S}_1\tr\m{U}_1\tr\m{N}\m{U}_1\m{S}_1\m{V}_1\tr\right)^{\dagger}\m{V}_1\m{S}_1\tr\m{U}_1\tr
			=\m{U}_1\m{U}_1\tr\m{N}^{-1}\m{U}_1\m{U}_1\tr,\notag
		\end{align*}
		which shows \eqref{lemma2.3 eq1} holds.
	\end{proof}
	Based on \textbf{Lemma \ref{lem3}}, the iteration
	\begin{equation*}
		\bm{\lambda}^{t+1}=\bm{\lambda}^t+ \left(\m{G}^{t}\right)^{\dagger} (\m{I}-\m{W})^{1 / 2}\m{x}^{t+1}
	\end{equation*}
	can be equivalently written as
	\begin{equation}\label{du3}
		\bm{\lambda}^{t+1}=\bm{\lambda}^t+ \alpha (\m{I} - \m{W})^{1 / 2}\m{x}^{t+1}
		+\left( \tilde{\m{G}}^{t}\right)^{\dagger}(\m{I} - \m{W})^{1 / 2}\m{x}^{t+1},
	\end{equation}
	where
	\begin{equation*}
		\tilde{\m{G}}^{t}={(\m{I} - \m{W})^{1/2}} (\nabla^2 f(\m{x}^{t + 1}))^{-1} {(\m{I} - \m{W})^{1/2}}.
	\end{equation*}
	Without the last term, \eqref{du3} is just the dual ascent step of the standard 
	Augmented Lagrange Method (ALM)
	for solving \eqref{obj_fun4}. Hence, the last term of \eqref{du3} can be simply viewed as a
	second-order correction term for the dual update.
	Meanwhile, we also notice that the computation of $(\m{I} - \m{W})^{1/2}$ is not only expensive but also
	destroys the network structure. To overcome this undesirable computation, multiplying
	\eqref{du3} by $(\m{I}-\m{W})^{1 / 2}$ and denoting $\m{v} = (\m{I}-\m{W})^{1 / 2} \bm{\lambda}$,
	we obtain 
	\begin{align}\label{dvv}
		\m{v}^{t+1} = & \m{v}^t+\alpha (\m{I} - \m{W})\m{x}^{t+1}  
		 +(\m{I}-\m{W})^{1/2} \left( \tilde{\m{G}}^{t}\right)^{\dagger} (\m{I}-\m{W})^{1/2}\m{x}^{t+1} \\
		= & \m{v}^t+\alpha (\m{I} - \m{W})\m{x}^{t+1} + (\m{I}-\m{W}) \m{D}^{t} (\m{I}-\m{W}) \m{x}^{t+1}, \nonumber
	\end{align}
	where 
	\[
	\m{D}^t = \left((\m{I}-\m{W})^{1/2} \right)^{\dagger}\left( \tilde{\m{G}}^{t}\right)^{\dagger}
	\left((\m{I}-\m{W})^{1/2} \right)^{\dagger}.
	\]
	However, in practice, for computation efficiency and numerical stability, 
	we would approximate $\m{D}^t$ by $\tilde{\m{P}}^t\tilde{\m{D}}$, where  $\tilde{\m{P}}^t$  is a block diagonal approximation 
	of  $(\tilde{\m{G}}^{t}+r^t\m{I})^{-1}$ and $\tilde{\m{D}}$ is a block diagonal matrix whose
	$i$-th diagonal block $\tilde{\m{D}}_i = \frac{1}{1-\tilde{W}_{ii}}\m{I}_p$.
	Here $r^t >0$ is a scalar. \textbf{Lemma~\ref{lemma2.3}} implies that 
	as $r^t$ goes to zero,
	$(\m{I}-\m{W})^{1/2} (\tilde{\m{G}}^{t}+r^t\m{I})^{-1} (\m{I}-\m{W})^{1/2}$
	approaches $ (\m{I}-\m{W})^{1/2} ( \tilde{\m{G}}^{t})^{\dagger} (\m{I}-\m{W})^{1/2}$.
	Finally, by introducing a dual stepsize $\gamma >0$ for ensuring global convergence,
	based on \eqref{dvv}, we propose to update $\{\bm{\lambda}^t\}$ and $\{\m{v}^t\}$ in our new algorithm  as 
	\begin{equation}\label{du11}
		\m{v}^{t+1}= \m{v}^t+ \gamma (\m{I} - \m{W})\bm{\nu}^t \mbox{ and } 
		\bm{\lambda}^{t+1}=  \bm{\lambda}^t+ \gamma (\m{I} - \m{W})^{1/2} \bm{\nu}^t, 
	\end{equation}
	where $\bm{\nu}^t = \alpha \m{x}^{t+1}+\tilde{\m{P}}^t\tilde{\m{D}}(\m{I}-\m{W})\m{x}^{t+1} $.
	%
	% Considering rank deficiency of $\tilde{\m{G}}^t$, we use $\tilde{\m{G}}^{t}+r\m{I}_{np}$ in place of $\tilde{\m{G}}^t$ where $r$ is a positive number. Since $(\tilde{\m{G}}^{t}+r\m{I}_{np})^{-1}$ is hard to compute, we then use an invertible block diagonal matrix $\tilde{\m{P}}^{t}$ to approximate $(\tilde{\m{G}}^{t}+r\m{I}_{np})^{-1}$, which yields
	%\begin{equation}\label{du4}
	%	\bm{\lambda}^{t+1}=\bm{\lambda}^t+ \alpha (\m{I} - \m{W})^{1 / 2}\m{x}^{t+1}
	%	+\tilde{\m{P}}^{t}(\m{I} - \m{W})^{1 / 2}\m{x}^{t+1}.
	%\end{equation}

	We now explain our construction of $\tilde{\m{P}}^t$ by using BB \cite{barzilai1988two}
	and dynamic average consensus techniques
	to capture certain curvature information of  $(\tilde{\m{G}}^{t}+r^t\m{I})^{-1}$.
	First, by using again $\m{B}^{t+1}$ to approximate $\nabla^2 f(\m{x}^{t + 1})$,
	noticing $\m{v}^{t} - \m{v}^{t-1} = (\m{I} - \m{W})^{1/2} (\bm{\lambda}^{t} - \bm{\lambda}^{t-1})$ and \eqref{du11},
	let us define
	\begin{align*}
		\bm{\zeta}^{t-1} 
		:=&{(\m{I} - \m{W})^{1/2}} (\m{B}^{t})^{-1} {(\m{I} - \m{W})^{1/2}} (\bm{\lambda}^{t} - \bm{\lambda}^{t-1})\\
		=&{(\m{I} - \m{W})^{1/2}} (\m{B}^{t})^{-1} (\m{v}^{t} - \m{v}^{t-1})\\
		=&{(\m{I} - \m{W})^{1/2}} \bm{\mu} ^{t-1},
	\end{align*} 
	where $\bm{\mu} ^{t-1}=(\m{B}^{t})^{-1} (\m{v}^{t} - \m{v}^{t-1})$, and  define
	\[
	\bm{\xi} ^{t-1} :=\bm{\lambda}^{t} - \bm{\lambda}^{t-1}= \gamma (\m{I} - \m{W})^{1/2} \bm{\nu}^{t-1}.
	\]
	Then, the standard centralized BB technique suggests to approximate $ (\tilde{\m{G}}^t +r^t\m{I})^{-1}$
	by a scalar matrix $p^{t}\m{I}$ , where
	\begin{align}\label{p}
		p^{t} =\left(\frac{(\bm{\zeta}^{t-1})\tr\bm{\xi}^{t-1}}{(\bm{\xi}^{t-1})\tr\bm{\xi}^{t-1}}+r^t \right)^{-1}
		=\left(\frac{(\m{v}^{t} - \m{v}^{t-1})\tr\bm{\mu} ^{t-1}}{\gamma (\m{v}^{t} - \m{v}^{t-1})\tr \bm{\nu}^{t-1}}+r^t \right)^{-1} 
		=\left(\frac{\sum_{i=1}^n\tilde{b}_i^{t}}{\sum_{i=1}^n\tilde{a}_i^{t}}+r^t \right)^{-1} 
	\end{align}
	with $\tilde{b}_i^{t} =(\m{v}_i^{t} - \m{v}_i^{t-1})\tr \bm{\mu}_i ^{t-1}$ and 
	$\tilde{a}_i^{t} = \gamma (\m{v}_i^{t} - \m{v}_i^{t-1})\tr \bm{\nu}_i^{t-1}
	= \gamma (\m{v}_i^{t} - \m{v}_i^{t-1})\tr\left(\alpha\m{x}_i^t+\tilde{\m{P}}^{t-1}_{i}\tilde{\m{D}}_i (\m{x}_i^{t} - \sum_{j \in \mathcal{N}_i} {{\tilde{W}_{ij}}\m{x}_j^{t}}) \right)$.
	Obviously, the calculation of $p^t$ in \eqref{p} needs global information from all nodes, which can not 
	be realized in the decentralized setting.
	On the other hand, we also notice that the scalar ${\sum_{i=1}^n\tilde{b}_i^{t}}/{\sum_{i=1}^n\tilde{a}_i^{t}}$
	in \eqref{p} is in fact the ratio of the average values of $\tilde{a}_i^{t}$ and $\tilde{b}_i^{t}$ over $n$ nodes. Motivated by the idea of dynamic average consensus \cite{zhu2010discrete}, we set $\tilde{\m{P}}^{t}$ 
	to have the following block scalar matrix format
	\begin{equation}\label{P1}
		\tilde{\m{P}}^{t} = \left[ {\begin{array}{*{20}{c}}
				{\tilde{p}_1^{t} \m{I}_p}&{}&{}\\
				{}&{...}&{}\\
				{}&{}&{\tilde{p}_n^{t} \m{I}_p}
		\end{array}} \right],
	\end{equation} 
	where
	\begin{equation}\label{pi}
		\tilde{p}_i^{t}=\left\{\begin{array}{cl}
			\left(\operatorname{Proj}_{[\underline{\omega},\overline{\omega}]}\left( \frac{b_i^{t}}{a_i^{t}} \right)+r^t \right)^{-1}, & \text {if } a_i^t \neq 0, \\
			\left(\overline{\omega}+r^t \right)^{-1}, & \text {if } a_i^t=0 \text{ and } b_i^t>0, \\
			\left(\underline{\omega}+r^t \right)^{-1}, & \text {if } a_i^t=0 \text{ and } b_i^t \leq 0,
		\end{array}\right.
	\end{equation}
	with $\underline{\omega} >\overline{\omega}> 0$ and the ratio $\frac{0}{0}$ being defined as zero.
	Here, $a_i^{t}$ and $b_i^{t}$ are certain estimations of $\frac{1}{n} \sum_{i=1}^n\tilde{a}_i^{t}$ 
	and $\frac{1}{n} \sum_{i=1}^n\tilde{b}_i^{t}$, respectively, and can be calculated only by the 
	local information from neighboring nodes. More specifically, in our algorithm $a_i^{t}$ and $b_i^{t}$ 
	are calculated as follows:
	\begin{equation}\label{a}
		{a}^{t}_i=\sum_{j \in \mathcal{N}_i} \tilde{W}_{i j}{a}^{t-1}_j+\tilde{a}_i^{t}-\tilde{a}_i^{t-1},\text{ } a^0_i=\tilde{a}^0_i=1,
	\end{equation}
	\begin{equation}\label{b}
		{b}^{t}_i=\sum_{j \in \mathcal{N}_i} \tilde{W}_{i j}{b}^{t-1}_j+\tilde{b}_i^{t}-\tilde{b}_i^{t-1},\text{ } b^0_i=\tilde{b}^0_i=1.
	\end{equation}	
	With the above calculation, we have the following lemma.
	\begin{lemma}\label{lem_a_b}
		$\frac{1}{n} \sum_{i=1}^n{a}_i^{t} = \frac{1}{n} \sum_{i=1}^n\tilde{a}_i^{t}$, $\frac{1}{n} \sum_{i=1}^n{b}_i^{t} = \frac{1}{n} \sum_{i=1}^n\tilde{b}_i^{t}$.
	\end{lemma}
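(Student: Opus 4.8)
The plan is a direct induction on $t$, exploiting the double stochasticity of $\tilde{\m{W}}$ to make the consensus-mixing term conservative, and then telescoping. I will prove the identity for $\{a_i^t\}$; the argument for $\{b_i^t\}$ is verbatim the same with \eqref{b} in place of \eqref{a}.

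First I would sum the recursion \eqref{a} over $i=1,\ldots,n$, writing $A^t := \sum_{i=1}^n a_i^t$ and $\tilde{A}^t := \sum_{i=1}^n \tilde{a}_i^t$. The only nontrivial point is the mixing term $\sum_{i=1}^n \sum_{j \in \mathcal{N}_i} \tilde{W}_{ij} a_j^{t-1}$. Since $\tilde{W}_{ij}=0$ whenever $j \notin \mathcal{N}_i$, this equals $\sum_{i=1}^n \sum_{j=1}^n \tilde{W}_{ij} a_j^{t-1} = \sum_{j=1}^n a_j^{t-1}\big(\sum_{i=1}^n \tilde{W}_{ij}\big)$. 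Because $\tilde{\m{W}}$ is symmetric and doubly stochastic (feature $\mathbf{2}$ of the mixing matrix), each column sums to one, i.e. $\sum_{i=1}^n \tilde{W}_{ij}=1$, so the mixing term collapses to $\sum_{j=1}^n a_j^{t-1}=A^{t-1}$. Hence summing \eqref{a} gives the scalar recursion $A^t = A^{t-1} + \tilde{A}^t - \tilde{A}^{t-1}$, equivalently $A^t - \tilde{A}^t = A^{t-1} - \tilde{A}^{t-1}$ for all $t \ge 1$.

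Then I would close the argument by induction: the quantity $A^t - \tilde{A}^t$ is constant in $t$, and at $t=0$ the initializations $a_i^0 = \tilde{a}_i^0 = 1$ give $A^0 = \tilde{A}^0 = n$, so $A^0 - \tilde{A}^0 = 0$. Therefore $A^t = \tilde{A}^t$ for every $t \ge 0$, and dividing by $n$ yields $\frac{1}{n}\sum_{i=1}^n a_i^t = \frac{1}{n}\sum_{i=1}^n \tilde{a}_i^t$. Repeating the same three steps with $b_i^t$, $\tilde{b}_i^t$ and \eqref{b} (note $b_i^0 = \tilde{b}_i^0 = 1$) gives the second identity.

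There is essentially no hard step here; the entire content is the observation that column-stochasticity of $\tilde{\m{W}}$ makes the consensus update preserve the running sum-of-corrections discrepancy, which is the standard dynamic average consensus invariant \cite{zhu2010discrete}. The only thing to be careful about is bookkeeping the "$+\,\tilde{a}_i^t - \tilde{a}_i^{t-1}$" increment so that the telescoping is matched against the correct initial condition; once that is set up, the induction is immediate.
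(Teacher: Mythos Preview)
Your proposal is correct and follows essentially the same approach as the paper: sum the recursion over all nodes, use double stochasticity of $\tilde{\m{W}}$ so that the mixing term preserves the sum, obtain the scalar recursion $A^t - \tilde A^t = A^{t-1} - \tilde A^{t-1}$, and conclude from the initialization $a_i^0=\tilde a_i^0$. The only cosmetic difference is that the paper writes the same computation in vector form via $\m{1}_n\tr\tilde{\m{W}}=\m{1}_n\tr$ before telescoping.
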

	\begin{proof}
		
		Denote 
		\begin{align*}
			\m{a}^t=[a_1^t; \ldots ;a_n^t], \quad\m{b}^t=[b_1^t; \ldots ;b_n^t],\\
			\tilde{\m{a}}^{t}=[\tilde{a}_1^{t};\ldots; \tilde{a}_n^{t}],\quad \tilde{\m{b}}^{t}=[\tilde{b}_1^{t};\ldots; \tilde{b}_n^{t}].
		\end{align*}
		Then, \eqref{a} and \eqref{b} can be written in the following matrix format 
		\begin{equation}\label{a1}
			\m{a}^{t}=\tilde{\m{W}}\m{a}^{t-1} +\tilde{\m{a}}^{t} - \tilde{\m{a}}^{t-1},
		\end{equation}
		\begin{equation}\label{b1}
			\m{b}^{t}=\tilde{\m{W}}\m{b}^{t-1} +\tilde{\m{b}}^{t} - \tilde{\m{b}}^{t-1}.
		\end{equation}
		Since $\tilde{\m{W}}$ is doubly stochastic, we have $\m{1}_n\tr=\m{1}_n\tr\tilde{\m{W}}$. Therefore, we have
		\begin{align*}
			\frac{1}{n} \sum_{i=1}^n{a}_i^{t} =\frac{1}{n}\m{1}_n\tr\m{a}^{t}
			= \frac{1}{n}\m{1}_n\tr\tilde{\m{W}}\m{a}^{t-1}+\frac{1}{n}\m{1}_n\tr\tilde{\m{a}}^{t}-\frac{1}{n}\m{1}_n\tr\tilde{\m{a}}^{t-1}
			=\frac{1}{n} \sum_{i=1}^n{a}_i^{t-1}+\frac{1}{n} \sum_{i=1}^n\tilde{a}_i^{t}-\frac{1}{n} \sum_{i=1}^n\tilde{a}_i^{t-1}.
		\end{align*}
		The above recursive formula gives $\frac{1}{n} \sum_{i=1}^n{a}_i^{t}=\frac{1}{n} \sum_{i=1}^n{a}_i^{0}+\frac{1}{n} \sum_{i=1}^n\tilde{a}_i^{t}-\frac{1}{n} \sum_{i=1}^n\tilde{a}_i^{0}$.
		Since ${a}_i^{0}=\tilde{a}_i^{0},i=1,\ldots,n$, we obtain $\frac{1}{n} \sum_{i=1}^n{a}_i^{t} = \frac{1}{n} \sum_{i=1}^n\tilde{a}_i^{t}$. 
		The identity $\frac{1}{n} \sum_{i=1}^n{b}_i^{t} = \frac{1}{n} \sum_{i=1}^n\tilde{b}_i^{t}$ can be similarly proved. 
	\end{proof}
	
	%The result of \textbf{Lemma \ref{lem_a_b}} demonstrates the rationality of updates \eqref{a1} and \eqref{b1}. 
	%Next, we employ a modification for \eqref{pi0} to ensure ${b_i^{t}}/{a_i^{t}}$ has non-negative lower bound and still use the notation $\tilde{p}_i^{t}$, 
	
	Noticing $\m{v}^t = (\m{I}-\m{W})^{1/2} \bm{\lambda}^t$, the primal updates in \eqref{pu11}
	can be also written as
	\begin{align}\label{pu2}
		\m{x}^{t+1}=\m{x}^{t}-\beta\left[\m{I}-\theta\alpha(\m{B}^t)^{-1}(\m{I}-\m{W})\right](\m{B}^t)^{-1}\nabla_{\m{x}}{{L}_\alpha }({\m{x}^t},{\m{v}^t}),
	\end{align}
	where 
	\begin{equation}\label{AL3}
		L_{\alpha}(\m{x},\m{v})= f(\m{x}) + \left\langle {\m{v},\m{x}} \right\rangle  + \frac{\alpha }{2}{\m{x}\tr}(\m{I} - \m{W})\m{x}.
	\end{equation}
	Then, summarizing the above discussion, our decentralized primal-dual method (DPDM)
	is given in \textbf{Algorithm \ref{alg:Framwork1}}.
	
	\begin{figure*}[!t]
		\centering
		\begin{minipage}{\textwidth}
			\begin{algorithm}[H]
				\caption{DPDM}
				\label{alg:Framwork1}
				\begin{algorithmic}[1]
					\Require
					$\m{x}^0$,  MaxIter, $\alpha$, $\beta$, $\gamma$, $\theta$, $\underline{\omega}$, $\overline{\omega}$, $\m{W}$, 
					$\{r^t\}_{t \geq 0}$.
					\State Set $t=0$, $T=\operatorname{MaxIter}$, $\m{v}^0 = \m{0}$, $\tilde{\m{P}}^0=\frac{1}{1+r^0}\m{I}$, $\m{H}^0=\m{I}$, $a^0_i=\tilde{a}^0_i=1$, $b^0_i=\tilde{b}^0_i=1$, $i=1, \ldots, n$.
					\State If $t \geq T$, stop.
					\State $\m{x}^{t+1}=\m{x}^{t}-\beta\left[\m{I}-\theta\alpha\m{H}^t(\m{I}-\m{W})\right]\m{H}^t\nabla_{\m{x}}{{L}_\alpha }({\m{x}^t},{\m{v}^t})$.
					\State
					$
					{\m{H}^{t+1}} = \left[ {\begin{array}{*{20}{c}}
							{{ \m{H}_1^{t + 1}}
							}&{}&{}\\
							{}&{...}&{}\\
							{}&{}&{\m{H}_n^{t+1}}
					\end{array}} \right]$, where
					$	\m{H}_i^{t+1}=\m{H}_i^{t}-\frac{\m{H}_i^{t} {\m{y}}_i^t (\m{s}_i^t)\tr+\m{s}_i^t ({\m{y}}_i^t)\tr \m{H}_i^{t}}{(\m{s}_i^t)\tr {\m{y}}_i^t}
					+\left(1+\frac{({\m{y}}_i^t)\tr \m{H}_i^{t} {\m{y}}_i^t}{(\m{s}_i^t)\tr {\m{y}}_i^t}\right)\frac{\m{s}_i^t(\m{s}_i^t)\tr}{(\m{s}_i^t)\tr {\m{y}}_i^t}. $
					\State
					If $t \geq 1$, $\tilde{\m{P}}^{t} = \left[ {\begin{array}{*{20}{c}}
							{\tilde{p}_1^{t} \m{I}_p}&{}&{}\\
							{}&{...}&{}\\
							{}&{}&{\tilde{p}_n^{t} \m{I}_p}
					\end{array}} \right]$, 
				where
					\begin{align*}
						\tilde{p}_i^{t} & = \left\{\begin{array}{cl}
							\left(\operatorname{Proj}_{[\underline{\omega},\overline{\omega}]}\left( \frac{b_i^{t}}{a_i^{t}} \right)+r^t \right)^{-1}, & \text { if } a_i^t \neq 0; \\
							\left(\overline{\omega}+r^t \right)^{-1}, & \text { if } a_i^t=0 \text{ and } b_i^t>0; \\
							\left(\underline{\omega}+r^t\right)^{-1}, & \text { if } a_i^t=0 \text{ and } b_i^t \leq 0,
						\end{array}\right.\\
						{a}^{t}_i &=  \sum_{j \in \mathcal{N}_i} \tilde{W}_{i j}{a}^{t-1}_j+\tilde{a}_i^{t}-\tilde{a}_i^{t-1}, \quad
						{b}^{t}_i=\sum_{j \in \mathcal{N}_i} \tilde{W}_{i j}{b}^{t-1}_j+\tilde{b}_i^{t}-\tilde{b}_i^{t-1}, \\
						\tilde{a}_i^{t} 
						&= \gamma (\m{v}_i^{t} - \m{v}_i^{t-1})\tr  \left(\alpha\m{x}_i^t+\tilde{\m{P}}^{t-1}_{i}\tilde{\m{D}}_i \m{u}_i^t \right), \quad \tilde{b}_i^{t} =(\m{v}_i^{t} - \m{v}_i^{t-1})\tr \m{H}_i^{t} (\m{v}_i^{t} - \m{v}_i^{t-1}).
					\end{align*}	
					%		\begin{equation*}
						%			\tilde{p}_i^{t}=\left\{\begin{array}{cl}
							%				\frac{1}{\operatorname{Proj}_{[\underline{\omega},\overline{\omega}]}\left( \frac{b_i^{t}}{a_i^{t}} \right)+r^t}, & \text { if } a_i^t \neq 0, \\
							%				\frac{1}{\overline{\omega}+r^t}, & \text { if } a_i^t=0 \text{ and } b_i^t>0, \\
							%				\frac{1}{\underline{\omega}+r^t}, & \text { if } a_i^t=0 \text{ and } b_i^t \leq 0,
							%			\end{array}\right.
						%		\end{equation*}
					%$$
					%{a}^{t}_i=\sum_{j \in \mathcal{N}_i} \tilde{W}_{i j}{a}^{t-1}_j+\tilde{a}_i^{t}-\tilde{a}_i^{t-1}, \qquad
					%{b}^{t}_i=\sum_{j \in \mathcal{N}_i} \tilde{W}_{i j}{b}^{t-1}_j+\tilde{b}_i^{t}-\tilde{b}_i^{t-1},
					%$$
					%$$
					%\tilde{a}_i^{t} 
					%= \gamma (\m{v}_i^{t} - \m{v}_i^{t-1})\tr  \left(\alpha\m{x}_i^t+\tilde{\m{P}}^{t-1}_{i}\tilde{\m{D}}_i (\m{x}_i^{t} - \sum_{j \in \mathcal{N}_i} {{\tilde{W}_{ij}}\m{x}_j^{t}}) \right), \qquad \tilde{b}_i^{t} =(\m{v}_i^{t} - \m{v}_i^{t-1})\tr \m{H}_i^{t+1} (\m{v}_i^{t} - \m{v}_i^{t-1})
					%$$
					\State $	\m{v}^{t+1}=\m{v}^t+\gamma (\m{I} - \m{W}) 
					\left( \alpha \m{x}^{t+1}+\tilde{\m{P}}^t \tilde{\m{D}} \m{u}^{t+1}\right)$, where $\m{u}^{t+1} = (\m{I}-\m{W})\m{x}^{t+1}$.
					\State Set $t=t+1$ and go to Step 2.
					\Ensure
					$\m{x}^T$.
				\end{algorithmic}
			\end{algorithm}
		\end{minipage}
	\end{figure*}

	\begin{remark}
		\text{~}
		\begin{itemize}
			\item \textbf{Algorithm \ref{alg:Framwork1}} can be run parallelly on each node, which only uses 
			the local information from neighboring nodes.
			The updates of $\m{B}^t$ and $\tilde{\m{P}}^t$ exploit second-order information
			and only involve matrix-vector products, which would be computationally efficient. 
			\item % At most $2+\operatorname{sgn}(\theta)$ 
			At most $2+ I_{R\setminus \{0\}}(\theta)$  rounds of vector communications are taken in both the primal and dual updates of DPDM,
			which is comparable to or mildly higher than some other primal-dual quasi-Newton methods (see Table~\ref{comparison} for details).
			However, our numerical experiments show the reduction in the total number of iterations and the 
			CPU time by applying quasi-Newton techniques in DPDM would offset its slightly higher communication cost per iteration.     
			% gains in iteration and time efficiency make DPDM a preferable method to alternatives and the balance between communication and time costs will be investigated in our experiments.
			\item \textbf{Algorithm \ref{alg:Framwork1}} has the following relationships with
			some existing well-known decentralized algorithms. 
			Set the parameters $\beta=1$, $\gamma=1$ and $\theta=0$ in \textbf{Algorithm \ref{alg:Framwork1}}.
			If $\m{B}^{t}=\nabla^2 f(\m{x}^t)+\epsilon\m{I}$, $\tilde{\m{P}}^t=\m{0}$, 
			we obtain the algorithm NT \cite{zhang2021newton},
			while the algorithm PD-QN \cite{eisen2019primal} takes $\m{B}^{t}$ and $\tilde{\m{P}}^t$ as 
			some other special approximate matrices.
			On the other hand, if $\m{B}^{t}= 2 \alpha \m{I}$ and $\tilde{\m{P}}^t=\m{0}$, 
			\textbf{Algorithm 1} can be reduced as
			\begin{align*}
				{\m{x}^{t + 1}} = (\m{I} + \m{W}){\m{x}^t} - \frac{{\m{I} + \m{W}}}{2}{\m{x}^{t - 1}} - \frac{1}{{2\alpha }} \nabla f({\m{x}^t}) 
				+ \frac{1}{{2\alpha }} \nabla f({\m{x}^{t - 1}})
			\end{align*}
			which is equivalent to EXTRA \cite{shi2015extra} with the stepsize ${1/ {(2\alpha )}}$.
			\item A single quasi-Newton primal steps is performed in \textbf{Algorithm \ref{alg:Framwork1}}.
			Allowing  multiple primal updates per iteration will yield a more accurate approximate solution 
			$\m{x}^{t+1}$ of minimizing $L_{\alpha}({\m{x}},{\m{v}^t})$, which would also benefit the dual update. 
			By this consideration, a generalized decentralized primal-dual method(GDPDM) is given as \textbf{Algorithm \ref{alg:Framwork2}}, where $S \ge 1$ is the number of primal steps per iteration.
			
			\begin{algorithm}[H]
				\caption{GDPDM}
				\label{alg:Framwork2}
				\begin{algorithmic}[1]
					\Require
					$\m{x}^0$, MaxIter1, MaxIter2, $\alpha$, $\beta$, $\gamma$, $\theta$, $\underline{\omega}$, $\overline{\omega}$, $\m{W}$,
					$\{r^t\}_{t \ge 0}$.
					\State Set $t=0$, $T=$MaxIter1, $S=$MaxIter2,  $\m{v}^0 = \m{0}$, $\tilde{\m{P}}^0=\frac{1}{1+r^0}\m{I}$, $\m{H}^0=\m{I}$, $a^0_i=\tilde{a}^0_i=1$, $b^0_i=\tilde{b}^0_i=1$, $i=1, \ldots, n$.
					\State If $t \geq T$, stop.
					\State $\m{x}^{t,0}=\m{x}^{t}$, $\m{H}^{t,0}=\m{H}^{t}$.
					\State For $s=0,1,\ldots,S-1$
					\State
					\quad $\m{x}^{t,s+1}=\m{x}^{t,s}-\beta\left[\m{I}-\theta\alpha\m{H}^{t,s}(\m{I}-\m{W})\right]\m{H}^{t,s}\nabla_{\m{x}}{{L}_\alpha }({\m{x}^{t,s}},{\m{v}^t})$.
					\State \quad
					$
					{\m{H}^{t,s+1}} = \left[ {\begin{array}{*{20}{c}}
							{{\m{H}_1^{t,s+1}}}&{}&{}\\
							{}&{...}&{}\\
							{}&{}&{{\m{H}_n^{t,s+1}}}
					\end{array}} \right]$, where\\
					\begin{align*}
						\m{H}_i^{t+1,s} &= \m{H}_i^{t,s}-\frac{\m{H}_i^{t,s} {\m{y}}_i^{t,s} (\m{s}_i^{t,s})\tr+\m{s}_i^{t,s}({\m{y}}_i^{t,s})\tr \m{H}_i^{t,s}}{(\m{s}_i^{t,s})\tr {\m{y}}_i^{t,s}}
						+\left(1+\frac{({\m{y}}_i^{t,s})\tr \m{H}_i^{t,s} {\m{y}}_i^{t,s}}{(\m{s}_i^{t,s})\tr {\m{y}}_i^{t,s}}\right)\frac{ \m{s}_i^{t,s}(\m{s}_i^{t,s})\tr}{(\m{s}_i^{t,s})\tr {\m{y}}_i^{t,s}}, \\
						\m{s}_i^{t,s} & = \m{x}_i^{t,s+1} - \m{x}_i^{t,s} \mbox{ and } 
						\m{y}_i^{t,s} = \nabla {f_i}( \m{x}_i^{t, s+1} ) - \nabla {f_i}( \m{x}_i^{t,s} ).
					\end{align*}	
					%			$$\quad
					%		\m{H}_i^{t+1,s}=\m{H}_i^{t,s}-\frac{\m{H}_i^{t,s} {\m{y}}_i^{t,s} (\m{s}_i^{t,s})\tr+\m{s}_i^{t,s}({\m{y}}_i^{t,s})\tr \m{H}_i^{t,s}}{(\m{s}_i^{t,s})\tr {\m{y}}_i^{t,s}}
					%		+\left(1+\frac{({\m{y}}_i^{t,s})\tr \m{H}_i^{t,s} {\m{y}}_i^{t,s}}{(\m{s}_i^{t,s})\tr {\m{y}}_i^{t,s}}\right)\frac{ \m{s}_i^{t,s}(\m{s}_i^{t,s})\tr}{(\m{s}_i^{t,s})\tr {\m{y}}_i^{t,s}}, 
					%			$$
					%			$$	
					%			\quad  \m{s}_i^{t,s} = \m{x}_i^{t,s+1} - \m{x}_i^{t,s} \mbox{ and } 
					%			\m{y}_i^{t,s} = \nabla {f_i}( \m{x}_i^{t, s+1} ) - \nabla {f_i}( \m{x}_i^{t,s} ).
					%			$$
					\State End for.
					\State $\m{x}^{t+1}=\m{x}^{t, S}, \m{H}^{t+1}=\m{H}^{t, S}$.
					\State
					If $t \geq 1$, $\tilde{\m{P}}^{t} = \left[ {\begin{array}{*{20}{c}}
							{\tilde{p}_1^{t} \m{I}_p}&{}&{}\\
							{}&{...}&{}\\
							{}&{}&{\tilde{p}_n^{t} \m{I}_p}
					\end{array}} \right]$, where
					$	\tilde{p}_i^{t}  = \left\{\begin{array}{cl}
						\left(\operatorname{Proj}_{[\underline{\omega},\overline{\omega}]}\left( \frac{b_i^{t}}{a_i^{t}} \right)+r^t \right)^{-1}, & \text { if } a_i^t \neq 0; \\
						\left(\overline{\omega}+r^t \right)^{-1}, & \text { if } a_i^t=0 \text{ and } b_i^t>0; \\
						\left(\underline{\omega}+r^t\right)^{-1}, & \text { if } a_i^t=0 \text{ and } b_i^t \leq 0,
					\end{array}\right. $
					\begin{align*}
						%	\tilde{p}_i^{t} & = &\left\{\begin{array}{cl}
							%		\frac{1}{\operatorname{Proj}_{[\underline{\omega},\overline{\omega}]}\left( \frac{b_i^{t}}{a_i^{t}} \right)+r^t}, & \text { if } a_i^t \neq 0, \\
							%		\frac{1}{\overline{\omega}+r^t}, & \text { if } a_i^t=0 \text{ and } b_i^t>0, \\
							%		\frac{1}{\underline{\omega}+r^t}, & \text { if } a_i^t=0 \text{ and } b_i^t \leq 0,
							%	\end{array}\right. \\
						{a}^{t}_i & =  \sum_{j \in \mathcal{N}_i} \tilde{W}_{i j}{a}^{t-1}_j+\tilde{a}_i^{t}-\tilde{a}_i^{t-1}, \quad
						{b}^{t}_i=\sum_{j \in \mathcal{N}_i} \tilde{W}_{i j}{b}^{t-1}_j+\tilde{b}_i^{t}-\tilde{b}_i^{t-1}, \\
						\tilde{a}_i^{t} &
						= \gamma (\m{v}_i^{t} - \m{v}_i^{t-1})\tr  \left(\alpha\m{x}_i^t+\tilde{\m{P}}^{t-1}_{i}\tilde{\m{D}}_i \m{u}_i^t \right), \quad \tilde{b}_i^{t} =(\m{v}_i^{t} - \m{v}_i^{t-1})\tr \m{H}_i^{t} (\m{v}_i^{t} - \m{v}_i^{t-1}).
					\end{align*}		
					%		\begin{equation*}
						%	\tilde{p}_i^{t}=\left\{\begin{array}{cl}
							%		\frac{1}{\operatorname{Proj}_{[\underline{\omega},\overline{\omega}]}\left( \frac{b_i^{t}}{a_i^{t}} \right)+r^t}, & \text { if } a_i^t \neq 0, \\
							%		\frac{1}{\overline{\omega}+r^t}, & \text { if } a_i^t=0 \text{ and } b_i^t>0, \\
							%		\frac{1}{\underline{\omega}+r^t}, & \text { if } a_i^t=0 \text{ and } b_i^t \leq 0,
							%	\end{array}\right.
						%\end{equation*}
						%$$
						%{a}^{t}_i=\sum_{j \in \mathcal{N}_i} \tilde{W}_{i j}{a}^{t-1}_j+\tilde{a}_i^{t}-\tilde{a}_i^{t-1}, \qquad
						%{b}^{t}_i=\sum_{j \in \mathcal{N}_i} \tilde{W}_{i j}{b}^{t-1}_j+\tilde{b}_i^{t}-\tilde{b}_i^{t-1},
						%$$
						%$$
						%\tilde{a}_i^{t} 
						%= \gamma (\m{v}_i^{t} - \m{v}_i^{t-1})\tr  \left(\alpha\m{x}_i^t+\tilde{\m{P}}^{t-1}_{i}\tilde{\m{D}}_i (\m{x}_i^{t} - \sum_{j \in \mathcal{N}_i} {{\tilde{W}_{ij}}\m{x}_j^{t}}) \right), \qquad \tilde{b}_i^{t} =(\m{v}_i^{t} - \m{v}_i^{t-1})\tr \m{H}_i^{t+1} (\m{v}_i^{t} - \m{v}_i^{t-1})
						%$$
						\State  $	\m{v}^{t+1}=\m{v}^t+\gamma (\m{I} - \m{W}) 
						\left( \alpha \m{x}^{t+1}+\tilde{\m{P}}^t \tilde{\m{D}} \m{u}^{t+1} \right)$, where $ \m{u}^{t+1} = (\m{I}-\m{W})\m{x}^{t+1}$.
						\State Set $t=t+1$ and go to Step 2.
						\Ensure
						$\m{x}^T$.
					\end{algorithmic}
				\end{algorithm}
		\end{itemize}
	\end{remark}

	\iffalse
	which in other words are the primal-dual method with twice primal quasi-Neton steps and once dual ascent using BB stepsize per iteration. We can generalize the method \eqref{pu3}, \eqref{du30} and \eqref{du31}. We allow for multiple primal updates at each iteration, which incures additional computational cost to some extent but makes us expect a faster numerical performance. A generalized decentralized second-order multiplier method(GDSOMM) is given as \textbf{Algorithm \ref{alg:Framwork2}}.\\
	\fi
	
	\paragraph{Quasi-Newton tracking}
	As Newton Tracking \cite{zhang2021newton}, we show that DPDM also
	maintains a quasi-Newton tracking property, implying that the updating direction of
	each $\m{x}_i^t$ is a local quasi-Newton direction and will be a global quasi-Newton direction
	when $\left\{ \m{x}_i^t \right\}_{i = 1}^n $ are getting consensus. 
	
	First, by \eqref{pu2}, we have
	\begin{align*}
		&{\m{B}^t}{\m{x}^{t + 1}} - {\m{B}^t}{\m{x}^t} \\
		= & - \beta\left[ {\nabla f\left( {{\m{x}^t}} \right) + {\m{v}^t} + \alpha (\m{I} - \m{W}){\m{x}^t}} \right]
		+\beta\theta\alpha(\m{I}-\m{W})(\m{B}^t)^{-1}\left[ {\nabla f\left( {{\m{x}^t}} \right) + {\m{v}^t} + \alpha (\m{I} - \m{W}){\m{x}^t}} \right]\\
		=&- \beta\left[ {\nabla f\left( {{\m{x}^t}} \right) + {\m{v}^t} + \alpha (\m{I} - \m{W}){\m{x}^t}} \right]+\beta(\m{I} - \m{W})\m{h}^t,
	\end{align*}
	where 
	\begin{equation*}
		\m{h}^t=\theta\alpha(\m{B}^t)^{-1}\left[ {\nabla f\left( {{\m{x}^t}} \right) + {\m{v}^t} + \alpha (\m{I} - \m{W}){\m{x}^t}} \right].
	\end{equation*}
	Hence, we have 
	\begin{equation}\label{2.25}
		{\m{x}^{t + 1}} = {\m{x}^t} - \beta \m{d}^t,
	\end{equation}
	where $ \m{d}^t = ({\m{B}^t})^{-1} \m{g}^t$ and 
	\begin{equation}\label{2.26}
		\m{g}^t = \nabla f\left( {{\m{x}^t}} \right) + {\m{v}^t} + (\m{I} - \m{W}) ( \alpha \m{x}^t + \m{h}^t).
	\end{equation}

	We now show the quasi-Newton tracking by \eqref{2.25}, \eqref{2.26}, and \eqref{B}. Firstly, $\m{B}_i^{t+1}$ given by \eqref{Bit} is actually the solution of the following optimization problem
	\begin{align*}
		\m{B}_i^{t + 1} = \arg &\mathop {\min}\limits_\m{Z} {\mathop{\rm tr}\nolimits} \left[ {{{\left( {\m{B}_i^t} \right)}^{ - 1}}\m{Z}} \right] - \log \det \left[ {{{\left( {\m{B}_i^t} \right)}^{ - 1}}\m{Z}} \right],\\
		&s.t.\quad \m{Z}\m{s}_i^t = \m{y}_i^t,\quad \m{Z} \succeq 0,
	\end{align*}
	where $\m{s}_i^t = \m{x}_i^{t + 1} - \m{x}_i^t$, $\m{y}_i^t = \nabla {f_i}( {\m{x}_i^{t + 1}} ) - \nabla {f_i}( {\m{x}_i^t} )$.
	Hence, we have
	\begin{equation}\label{2.28}
		\m{B}_i^{t + 1}\m{s}_i^t = \m{y}_i^t.
	\end{equation}
	Summing \eqref{2.28} over $i=1, \ldots , n$, we obtain
	\begin{equation}\label{2.29}
		\sum\limits_{i = 1}^n {\m{B}_i^{t + 1}(\m{x}_i^{t + 1} - \m{x}_i^t)}  = \sum\limits_{i = 1}^n {\left( {\nabla {f_i}( {\m{x}_i^{t + 1}} ) - \nabla {f_i}( {\m{x}_i^t} )} \right)}.
	\end{equation}
	When $\left\{ \m{x}_i^t \right\}_{i = 1}^n $ are consensus for large $t$, 
	i.e., $\m{x}_i^t = \bar{\m{x}}^t$ for all $i=1,\ldots, n$ and large $t$,
	\eqref{2.29} implies $\sum_{i=1}^n\m{B}_i^{t+1}$ will satisfy
	the global quasi-Newton secant equation of the original problem \eqref{obj_fun1},
	i.e.,
	\[
	\sum_{i=1}^n\m{B}_i^{t+1}(\bar{\m{x}}^{t+1} - \bar{\m{x}}^t) =
	\nabla F(\bar{\m{x}}^{t+1}) - \nabla F(\bar{\m{x}}^t),
	\]
	where $F(\m{z}) = \sum_{i=1}^n f_i(\m{z})$.
	On the other hand, by \eqref{2.26} and \eqref{du11}, we have 
	\begin{align}\label{2.31}
		& \m{B}^{t + 1} \m{d}^{t + 1} - \m{B}^t \m{d}^t = \m{g}^{t+1} - \m{g}^t \\
		= &  \nabla f \left( \m{x}^{t+1} \right) - \nabla f\left( \m{x}^t \right) + \m{v}^{t+1} - \m{v}^t  
		 + (\m{I} - \m{W})\left[ \alpha (\m{x}^{t + 1} - \m{x}^t) +\m{h}^{t+1}-\m{h}^t\right]\notag\\
		=&  \nabla f\left(\m{x}^{t + 1}\right) - \nabla f \left(\m{x}^t \right)+(\m{I} - \m{W}) \tilde{\m{g}}^t,\notag
	\end{align}
	where $\tilde{\m{g}}^t= \alpha (\m{x}^{t + 1} -\m{x}^t) + \gamma \bm{\nu}^t +\m{h}^{t+1}-\m{h}^t$. The $i$-th block of \eqref{2.31} can be written as
	\begin{align}\label{2.32}
		&{\m{B}_i^{t + 1}}  \m{d}_i^{t + 1} -  {\m{B}_i^{t}}  \m{d}_i^{t} 
		= \nabla {f_i}\left( {\m{x}_i^{t + 1}} \right) - \nabla {f_i}\left( {\m{x}_i^t} \right) 
		+\left( {\tilde{\m{g}}_i^t - \sum\limits_{j \in {\cal N}_i} {\tilde{W}_{ij}} \tilde{\m{g}}_j^t} \right).
	\end{align}
	Summing \eqref{2.32} over $i=1, \ldots , n$ and using $ \tilde{\m{W}}\tr \m{1} = \m{1} $ yields
	\begin{equation}\label{2.33}
		\sum\limits_{i = 1}^n {\m{B}_i^{t + 1} \m{d}_i^{t + 1}}  = \sum\limits_{i = 1}^n {\m{B}_i^{t} \m{d}_i^t}  
		+ \sum\limits_{i = 1}^n {\left( {\nabla {f_i}( {\m{x}_i^{t + 1}} ) - \nabla {f_i}( {\m{x}_i^t} )} \right)} .
	\end{equation}
	Since $\m{v}^0 = \m{0}$ in DPDM, we have from \eqref{2.26} that 
	\[
	\sum_{i=1}^n {\m{B}_i^{0}\m{d}_i^0} = \sum_{i=1}^n \m{g}_i^0 = 
	\sum_{i=1}^n \nabla f_i \left( \m{x}_i^0 \right).
	\]
	So, summing \eqref{2.33} over $t$, we have
	\begin{equation}\label{2.34}
		\sum\limits_{i = 1}^n {\m{B}_i^{t} \m{d}_i^t}  = \sum\limits_{i = 1}^n {\nabla {f_i}\left( {\m{x}_i^t} \right)}.
	\end{equation}
	Then, when $\left\{ \m{x}_i^t \right\}_{i = 1}^n $ are getting consensus for large $t$, 
	we have from $\m{d}_i^t = (\m{x}_i^t -\m{x}_i^{t+1})/\beta$ that
	$\left\{ \m{d}_i^t \right\}_{i = 1}^n $ are also getting consensus for large $t$,
	which implies $\frac{1}{n}\sum_{i=1}^n\m{d}_i^t =: \bar{\m{d}}^t = \m{d}_i^t$
	for all $i=1, \ldots, n$ and large $t$.
	Hence, by \eqref{2.34}, we have
	\begin{equation}\label{2.35}
		\sum\limits_{i = 1}^n {\m{B}_i^{t}}{\bar{\m{d}}^t} = \sum\limits_{i = 1}^n {\nabla {f_i}\left( {{{\bar{ \m{x}}}^t}} \right)},
	\end{equation}
	which together with \eqref{2.29} shows $\bar{\m{d}}^t$ is a a global quasi-Newton direction
	of original problem \eqref{obj_fun1}.
	
	\section{CONVERGENCE ANALYSIS}
	In this section, we will analyze the global convergence of \textbf{Algorithm \ref{alg:Framwork2}}.
	Note that \textbf{Algorithm \ref{alg:Framwork1}} is a special case of \textbf{Algorithm \ref{alg:Framwork2}}
	with $S=1$. For convenience, let us denote $\m{Z}=\m{I}-\m{W}$, $\rho=\bm{\lambda}_{\max}(\m{Z})$,
	$\sigma=\bm{\lambda}_{2}(\m{Z})$ and
	\begin{align*}
		\tilde{\m{B}}^{t,s}&=\left[\m{I}-\theta\alpha(\m{B}^{t,s})^{-1}\m{Z}\right] \left(\m{B}^{t,s}\right)^{-1}.
	\end{align*}
	Then, \textbf{Algorithm \ref{alg:Framwork2}} gives
	\begin{equation}\label{x-update-2}
		\m{x}^{t,s+1}=\m{x}^{t,s}-\beta \tilde{\m{B}}^{t,s} \nabla_{\m{x}}{{L}_\alpha }({\m{x}^{t,s}},{\m{v}^t}).
	\end{equation}
	By \textbf{Assumption \ref{as1}}, \textbf{\ref{as2}}, we can easily get 
	\begin{equation}\label{L_bound}
		\mu \m{I} \preceq \nabla_{\m{x} \m{x}}L_{\alpha}(\m{x},\m{v}) 
		= \nabla_{\m{x} \m{x}} \tilde{L}_{\alpha}(\m{x},\bm{\lambda})  \preceq \C{L} \m{I},
		\; \forall \m{x} \in {R^{np}},
	\end{equation}
	where $\C{L}=L+\rho\alpha$. 
	We also make the following assumption.
	\begin{assum}\label{as3}
		The approximate matrices $\{\m{B}^{t,s}\}$ satisfy
		\begin{equation*}
			\psi \m{I} \preceq \left(\m{B}^{t,s}\right)^{-1} \preceq \bar{\psi} \m{I},
		\end{equation*}
		for any $t$ and $s$, where $\bar{\psi} > \psi >0$.
		And the parameter $\theta$ is chosen such that $0 \leq \theta < \min \{1, \psi/(\alpha \rho) \}$ 
	\end{assum}
	From \textbf{Assumption \ref{as3}}, we can easily derive
	\begin{equation*}
		\psi\m{I}
		\preceq	\tilde{\m{B}}^{t,s} \preceq \Psi \m{I}, \mbox{ where } \Psi=
		\frac{\bar{\psi}\psi}{\psi-\theta\alpha\rho}.
	\end{equation*} 
	Let $\m{P}^t=\alpha \m{I}+(\m{I}-\m{W})^{1/2}\tilde{\m{P}}^t \tilde{\m{D}}(\m{I}-\m{W})^{1/2}$.
	From \eqref{pi}, we can also derive
	\begin{equation*}
		\alpha \m{I}
		\preceq	\m{P}^t \preceq \bar{\alpha}  \m{I}, \mbox{ where } \bar{\alpha} =
		\alpha+\frac{\rho}{\underline{\omega}(1-\max_i\{\tilde{W}_{ii}\})}.
	\end{equation*}
	In addition, by \eqref{du11}, we have
	\begin{equation}\label{du5}
		\m{v}^{t+1}=\m{v}^t+\gamma (\m{I} - \m{W})^{1/2}\m{P}^t(\m{I} - \m{W})^{1/2}\m{x}^{t+1},
	\end{equation}
	and
	\begin{equation}\label{du6}
		\bm{\lambda}^{t+1}=\bm{\lambda}^t+\gamma\m{P}^t(\m{I} - \m{W})^{1/2}\m{x}^{t+1}.
	\end{equation}
	In the following, we define $\beta_1$ and $\beta_2$ be lower and upper bounds of eigenvalues of 
	$\beta \tilde{\m{B}}^{t,s}$:
	\begin{align}\label{bound1}
		\beta_1 = \beta \psi \quad \mbox{and} \quad \beta_2 = \beta \Psi,
	\end{align}
	and let $\gamma_1$ and $\gamma_2$ the lower and upper bounds to eigenvalues of $\gamma\m{P}^t$:
	\begin{align}\label{bound2}
		\gamma_1 = \gamma \alpha \quad \mbox{and} \quad
		\gamma_2 =  \gamma \bar{\alpha}.
	\end{align}
	
	By \textbf{Assumption \ref{as2}} and Slater's condition, which holds due to linear constraints, 
	strong duality holds for the problem \eqref{obj_fun4}. Hence, a dual equivalent problem
	to the problem \eqref{obj_fun4} as well as the problem \eqref{obj_fun3} is 
	\begin{align}\label{obj_fun5}
		\mathop {\max }\limits_{\bm{\lambda} \in {R^{np}}} g(\bm{\lambda}),
	\end{align}
	where $g(\bm{\lambda})={\min}_{\m{x} \in {R^{np}}}\tilde{L}_{\alpha}(\m{x},\bm{\lambda})$ is the dual function.
	In addition, denoting  $ \tilde{f}(\cdot) := f(\cdot)+(\alpha/2)\|\cdot\|_{\m{Z}}$, we have
	\begin{align}\label{conjugate-f}
		g(\bm{\lambda}) = \min_{\m{x}}\{f(\m{x})+\bm{\lambda}\tr\m{Z}^{1/2}\m{x}+(\alpha/2)\m{x}\tr\m{Z}\m{x}\}
		=  -\tilde{f}^*(-\m{Z}^{1/2}\bm{\lambda}),
	\end{align}
	where $\tilde{f}^*$ is the conjugate function of $\tilde{f}$.
	Note that $\tilde{f}^*$ is $1/\C{L}$-strongly convex.
	Let $-\m{v}^*$ be the unique minimizer of $\tilde{f}^*$ and
	$\Lambda^*$ be the dual optimal solution set of \eqref{obj_fun5}.
	Then, it follows from \eqref{conjugate-f} 
	that 
	\[
	\Lambda^*=\{\bm{\lambda}^*:\m{Z}^{1/2}\bm{\lambda}=\m{v}^*\}
	\]
	and for any $\bm{\lambda}^* \in \Lambda^*$, we have 
	\[
	g(\bm{\lambda}^*)=-\tilde{f}^*(-\m{Z}^{1/2}\bm{\lambda}^*)=-\tilde{f}^*(-\m{v}^*).
	\]
	Furthermore, by \eqref{obj_fun5} and the definition of $\m{x}^*(\bm{\lambda})$ in \eqref{pu0}, we have $g(\bm{\lambda})=\tilde{L}_{\alpha}(\m{x}^*(\bm{\lambda}),\bm{\lambda})$, 
	\begin{equation}\label{grad_g}
		\nabla g(\bm{\lambda})=\m{Z}^{1/2}\m{x}^*(\bm{\lambda}),
	\end{equation}
	and  $	\m{x}^*(\bm{\lambda}^*)=\m{x}^*$ by the strong duality.
	The following lemma reveals some important properties of the function $g(\cdot)$.
	\begin{lemma}\label{lem3.1}
		Under \textbf{Assumptions \ref{as1}}, \textbf{\ref{as2}} and \textbf{\ref{as3}}, the dual function 
		$g(\cdot)$ is $L_g$-Lipschitz smooth and for any  ${\bm{\lambda}}^* \in \Lambda^*$,
		the PL inequality holds
		\begin{equation*}
			g\left(\bm{\lambda}^*\right)-g(\bm{\lambda}^t) \leq \frac{1}{2 \mu_g}\|\nabla g(\bm{\lambda}^t)\|^2
		\end{equation*}
		for all $t \ge0$, where $\mu_g=\sigma/\C{L}$ and $L_g=\rho/\mu$.
	\end{lemma}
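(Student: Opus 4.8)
The plan is to argue entirely through the conjugate representation $g(\bm{\lambda})=-\tilde{f}^*(-\m{Z}^{1/2}\bm{\lambda})$ from \eqref{conjugate-f}, together with \eqref{grad_g}, which I would rewrite as $\nabla g(\bm{\lambda})=\m{Z}^{1/2}\nabla\tilde{f}^*(-\m{Z}^{1/2}\bm{\lambda})=\m{Z}^{1/2}\m{x}^*(\bm{\lambda})$. First I would record the two curvature facts I need about $\tilde{f}=f+(\alpha/2)\|\cdot\|_{\m{Z}}^2$: since $\mu\m{I}\preceq\nabla^2 f\preceq L\m{I}$ and $\m{0}\preceq\alpha\m{Z}\preceq\alpha\rho\m{I}$, the function $\tilde{f}$ is $\mu$-strongly convex and $\C{L}$-smooth with $\C{L}=L+\alpha\rho$; hence $\tilde{f}^*$ is $1/\C{L}$-strongly convex (as already noted before the lemma) and $\nabla\tilde{f}^*$ is $1/\mu$-Lipschitz. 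The smoothness of $g$ is then immediate: for any $\bm{\lambda}_1,\bm{\lambda}_2$,
\begin{align*}
\|\nabla g(\bm{\lambda}_1)-\nabla g(\bm{\lambda}_2)\|
&=\bigl\|\m{Z}^{1/2}\bigl(\nabla\tilde{f}^*(-\m{Z}^{1/2}\bm{\lambda}_1)-\nabla\tilde{f}^*(-\m{Z}^{1/2}\bm{\lambda}_2)\bigr)\bigr\| \\
&\le\frac{\|\m{Z}^{1/2}\|^2}{\mu}\,\|\bm{\lambda}_1-\bm{\lambda}_2\|=\frac{\rho}{\mu}\,\|\bm{\lambda}_1-\bm{\lambda}_2\|,
\end{align*}
since $\|\m{Z}^{1/2}\|^2=\lambda_{\max}(\m{Z})=\rho$; this gives $L_g=\rho/\mu$.

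For the PL inequality the point to be careful about is that $g$ is \emph{not} strongly concave: $\m{Z}$ is rank-deficient, $g$ depends on $\bm{\lambda}$ only through $\m{Z}^{1/2}\bm{\lambda}$, so a direct appeal to strong concavity of $g$ (or a ``quadratic-growth'' bound through the full gradient $\m{x}^*(\bm{\lambda})$) would produce the wrong constant. Instead I would exploit strong convexity of $\tilde{f}^*$ on the relevant subspace. Fix $\bm{\lambda}^t$, put $\m{c}=\nabla\tilde{f}^*(-\m{Z}^{1/2}\bm{\lambda}^t)=\m{x}^*(\bm{\lambda}^t)$, and let $\Pi$ be the orthogonal projection onto $\operatorname{Range}(\m{Z})=\operatorname{Range}(\m{Z}^{1/2})=\operatorname{Null}(\m{Z})^{\perp}$. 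Writing the $1/\C{L}$-strong-convexity lower bound of $\tilde{f}^*$ at the point $-\m{Z}^{1/2}\bm{\lambda}^t$ and minimizing the resulting quadratic over all points $-\m{Z}^{1/2}\bm{\mu}$ — whose displacements from $-\m{Z}^{1/2}\bm{\lambda}^t$ sweep out exactly $\operatorname{Range}(\m{Z})$ — gives
\[
\min_{\bm{\mu}}\tilde{f}^*(-\m{Z}^{1/2}\bm{\mu})\ \ge\ \tilde{f}^*(-\m{Z}^{1/2}\bm{\lambda}^t)-\tfrac{\C{L}}{2}\,\|\Pi\m{c}\|^2 .
\]
Since $\Lambda^*\neq\emptyset$ by strong duality, the minimizer $-\m{v}^*$ of $\tilde{f}^*$ lies in $\operatorname{Range}(\m{Z}^{1/2})$, so the left-hand side equals $\tilde{f}^*(-\m{v}^*)=-g(\bm{\lambda}^*)$, and therefore $g(\bm{\lambda}^*)-g(\bm{\lambda}^t)=\tilde{f}^*(-\m{Z}^{1/2}\bm{\lambda}^t)-\tilde{f}^*(-\m{v}^*)\le\tfrac{\C{L}}{2}\|\Pi\m{c}\|^2$.

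Finally I would translate $\|\Pi\m{c}\|$ back into $\|\nabla g(\bm{\lambda}^t)\|$. Because $\m{Z}^{1/2}$ annihilates $\operatorname{Null}(\m{Z})$ and on $\operatorname{Range}(\m{Z})$ its smallest eigenvalue is $\sqrt{\sigma}$, where $\sigma=\lambda_2(\m{Z})$ is the smallest nonzero eigenvalue of $\m{Z}$, we have $\|\nabla g(\bm{\lambda}^t)\|^2=\|\m{Z}^{1/2}\m{c}\|^2=\|\m{Z}^{1/2}\Pi\m{c}\|^2\ge\sigma\,\|\Pi\m{c}\|^2$. Combining with the previous bound yields $g(\bm{\lambda}^*)-g(\bm{\lambda}^t)\le\tfrac{\C{L}}{2\sigma}\|\nabla g(\bm{\lambda}^t)\|^2$, that is, the PL inequality with $\mu_g=\sigma/\C{L}$. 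I expect the main obstacle to be precisely the rank-deficiency bookkeeping of these last two paragraphs: one must ensure that the strong-convexity minimization runs over $\operatorname{Range}(\m{Z}^{1/2})$ rather than all of $R^{np}$, that $-\m{v}^*$ genuinely lies in that range so the constrained minimum equals the unconstrained one (which is exactly where $\Lambda^*\neq\emptyset$, hence Slater's condition and strong duality, enters), and that $\sigma$ is interpreted as the smallest positive eigenvalue of $\m{Z}$ when relating the projected gradient $\Pi\m{c}$ to $\nabla g$.
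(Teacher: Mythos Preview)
Your argument is correct and takes a genuinely different route from the paper. For the smoothness part, the paper computes $\nabla^2 g$ by implicit differentiation of $\nabla_{\m{x}}\tilde L_\alpha(\m{x}^*(\bm{\lambda}),\bm{\lambda})=\m{0}$ and applies the mean value theorem, whereas you go straight through the conjugate: $\nabla\tilde f^*$ is $1/\mu$-Lipschitz because $\tilde f$ is $\mu$-strongly convex, and the two factors of $\m{Z}^{1/2}$ contribute $\rho$. For the PL inequality, the paper first proves a \emph{restricted} strong-concavity estimate for $g$ on the subspace $\C{S}=\operatorname{Range}(\m{Z}^{1/2})$ and then uses, crucially, that the algorithmic iterates satisfy $\bm{\lambda}^t\in\C{S}$ for all $t$ (because $\bm{\lambda}^0=\m{0}$ and each dual update lies in $\C{S}$), together with the existence of some $\bm{\lambda}^*\in\Lambda^*\cap\C{S}$. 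Your projection argument on $\tilde f^*$ bypasses this entirely: since $g(\bm{\lambda})$ depends on $\bm{\lambda}$ only through $-\m{Z}^{1/2}\bm{\lambda}$, you never need $\bm{\lambda}^t\in\C{S}$, and the inequality you obtain actually holds for \emph{every} $\bm{\lambda}\in R^{np}$, not only the iterates. That is a cleaner and slightly stronger statement, at no extra cost.

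One small point worth tightening: the step ``since $\Lambda^*\neq\emptyset$, the minimizer $-\m{v}^*$ of $\tilde f^*$ lies in $\operatorname{Range}(\m{Z}^{1/2})$'' is not needed and is in fact delicate (the unconstrained minimizer of $\tilde f^*$ is $\nabla\tilde f(\m{0})=\nabla f(\m{0})$, which need not lie in $\operatorname{Range}(\m{Z}^{1/2})$). You only need the identity
\[
\min_{\bm{\mu}}\tilde f^*(-\m{Z}^{1/2}\bm{\mu})=\min_{\bm{\mu}}\bigl(-g(\bm{\mu})\bigr)=-g(\bm{\lambda}^*),
\]
which is immediate from \eqref{conjugate-f} and the definition of $\bm{\lambda}^*$; no statement about the \emph{global} minimizer of $\tilde f^*$ is required. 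With that replacement your proof is complete.
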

	\begin{proof}
		By taking derivatives with respect to $\bm{\lambda}$ on both sides of $\nabla_{\m{x}}\tilde{L}_{\alpha}(\m{x}^*(\bm{\lambda}),\bm{\lambda})=\m{0}$, we obtain 
		\begin{equation*}
			\nabla_{\m{x}\m{x}}\tilde{L}_{\alpha}(\m{x}^*(\bm{\lambda}),\bm{\lambda})\nabla\m{x}^*(\bm{\lambda})+\nabla_{\bm{\lambda} \m{x}}\tilde{L}_{\alpha}(\m{x}^*(\bm{\lambda}),\bm{\lambda})=\m{0}.
		\end{equation*}
		Note that $\nabla_{\bm{\lambda} \m{x}}\tilde{L}_{\alpha}(\m{x}^*(\bm{\lambda}),\bm{\lambda})=\m{Z}^{1/2}$. Then we have 
		\begin{equation*}
			\nabla\m{x}^*(\bm{\lambda})=-\left[\nabla_{\m{x}\m{x}}\tilde{L}_{\alpha}(\m{x}^*(\bm{\lambda}),\bm{\lambda})\right]^{-1}\m{Z}^{1/2}.
		\end{equation*}
		So, for any $\bm{\lambda}_1, \bm{\lambda}_2 \in R^{np}$, by mean value theorem, we have 
		\begin{align}\label{bound-xxxx}
			\m{x}^*(\bm{\lambda}_1)-\m{x}^*(\bm{\lambda}_2)
			=\nabla\m{x}^*(\tilde{\bm{\lambda}})(\bm{\lambda}_1-\bm{\lambda}_2) 
			=-\left[\nabla_{\m{x}\m{x}}\tilde{L}_{\alpha}(\m{x}^*(\tilde{\bm{\lambda}}),\tilde{\bm{\lambda}})\right]^{-1}\m{Z}^{1/2}(\bm{\lambda}_1-\bm{\lambda}_2),
		\end{align}	
		where  $\tilde{\bm{\lambda}}=(1-\eta)\bm{\lambda}_1+\eta\bm{\lambda}_2$	for some $\eta \in [0,1]$.
		Since $ \nabla g(\bm{\lambda})=\m{Z}^{1/2}\m{x}^*(\bm{\lambda})$ by \eqref{grad_g},
		we have
		\begin{align}\label{diff-grad-g}
			\left\|\nabla g(\bm{\lambda}_1)-\nabla g(\bm{\lambda}_2)\right\|
			=\left\|\m{Z}^{1/2}\left[\nabla_{\m{x}\m{x}}\tilde{L}_{\alpha}(\m{x}^*(\tilde{\bm{\lambda}}),\tilde{\bm{\lambda}})\right]^{-1}\m{Z}^{1/2}(\bm{\lambda}_1-\bm{\lambda}_2)\right\| .
		\end{align}
		Then, by $\mu$-strongly convexity of $\tilde{L}_{\alpha}(\cdot,\tilde{\bm{\lambda}})$,
		we have 
		\[
		\left\|\nabla g(\bm{\lambda}_1)-\nabla g(\bm{\lambda}_2)\right\| \le 
		L_g\left\| \bm{\lambda}_1-\bm{\lambda}_2\right\|,
		\]
		where $L_g=\rho/\mu$ and $\rho = \bm{\lambda}_{\max}(\m{Z})$.
		Hence, $g$ is $L_g$-Lipschitz smooth.
		In the following, consider any $\bm{\lambda}_1, \bm{\lambda}_2 \in R^{np}$
		such that $\bm{\lambda}_1 - \bm{\lambda}_2$ belongs to the column space of  $\m{Z}^{1/2}$,
		denoted as $\C{S}$.
		Then, by \eqref{diff-grad-g}, we have 
		\begin{equation}\label{zzz-convex}
			\left\|\nabla g(\bm{\lambda}_1)-\nabla g(\bm{\lambda}_2)\right\| \ge 
			\mu_g\left\| \bm{\lambda}_1-\bm{\lambda}_2\right\|,
		\end{equation}
		where $\mu_g=\sigma/\C{L}=\sigma/(L+\rho\alpha)$ and $\sigma = \bm{\lambda}_2(\m{Z})$,
		which implies that for any $\bm{\lambda} \in R^{np}$
		such that $\bm{\lambda} - \bm{\lambda}_2$ belongs to  $\C{S}$, we have
		\begin{equation}\label{ggg-convex}
			g(\bm{\lambda}) \ge g(\bm{\lambda}_2) + (\nabla g(\bm{\lambda}_2)) \tr (\bm{\lambda} - \bm{\lambda}_2)
			+ \frac{\mu_g}{2} \|\bm{\lambda} - \bm{\lambda}_2\|^2.
		\end{equation}
		Now let $\phi(\bm{\lambda}) =  
		g(\bm{\lambda}) -  (\nabla g(\bm{\lambda}_1)) \tr (\bm{\lambda} - \bm{\lambda}_1)$.
		Then, we have from $\nabla \phi(\bm{\lambda}_1) = \m{0}$ and \eqref{ggg-convex} that 
		\begin{align}\label{1234} 
			&\phi(\bm{\lambda}_1)  =  \min_{\bm{\lambda} \in \C{S}} \phi(\bm{\lambda})  
			\ge  \min_{\bm{\lambda} \in \C{S}} \left\{ \phi(\bm{\lambda}_2) + (\nabla \phi(\bm{\lambda}_2)) \tr 
			(\bm{\lambda} - \bm{\lambda}_2) + \frac{\mu_g}{2} \|\bm{\lambda} - \bm{\lambda}_2\|^2 \right) \\
			=&  \phi(\bm{\lambda}_2) - \frac{1}{2 \mu_g} \|\nabla g(\bm{\lambda}_2)\|^2,\notag
		\end{align}
		where the last equality follows by \eqref{grad_g} and 
		\[
		\bm{\lambda} = \bm{\lambda}_2
		- \frac{1}{\mu_g} \nabla g(\bm{\lambda}_2) = \bm{\lambda}_2 - 
		\frac{1}{\mu_g} \m{Z}^{1/2} \m{x}^*(\bm{\lambda}_2) \in \C{S}
		\]
		being the minimizer of the optimization in  \eqref{1234}.
		Simplifying \eqref{1234} gives
		\begin{align}\label{5678}
			g(\bm{\lambda}_2) -  g(\bm{\lambda}_1)  
			\le (\nabla g(\bm{\lambda}_1)) \tr (\bm{\lambda}_2 - \bm{\lambda}_1)
			+ \frac{1}{2\mu_g} \|\nabla g(\bm{\lambda}_1) - \nabla g(\bm{\lambda}_2)\|^2 
		\end{align}
		By \eqref{du11} with setting $\bm{\lambda}^0 = \m{0}$, we have $\bm{\lambda}^t$ belongs 
		$\C{S}$ for all $t \ge 0$. In addition, it is easy to see there exists an optimal
		$\bm{\lambda}^* \in \Lambda^*$ that belongs to $\C{S}$.
		So, we have $\bm{\lambda}^t - \bm{\lambda}^*$ belongs to $\C{S}$.
		Then, it follows from \eqref{5678} with $\bm{\lambda}_1 = \bm{\lambda}^t$ and 
		$\bm{\lambda}_2 = \bm{\lambda}^*$, and  $\nabla g(\bm{\lambda}^*) = \m{0}$  that
		\begin{equation*}
			g(\bm{\lambda}^*)-g(\bm{\lambda}^t) \leq \frac{1}{2 \mu_g}\|\nabla g(\bm{\lambda}^t)\|^2
		\end{equation*}
		for all $t \ge 0$. Since $g(\bm{\lambda}^*)$ is a constant for any  $\bm{\lambda}^* \in \Lambda^*$,
		the above inequality holds for any  $\bm{\lambda}^* \in \Lambda^*$.
	\end{proof}
	To establish a key recursive relation in our convergence analysis,
	we define the following quantities:   
	\begin{align*}
		& \Delta_{\bm{\lambda}}^t=g\left({\bm{\lambda}}^*\right)-g\left({\bm{\lambda}}^t\right), \\
		& \Delta_{\m{x}}^t=\tilde{L}_{\alpha}\left(\m{x}^t, {\bm{\lambda}}^t\right)-\tilde{L}_{\alpha}\left(\m{x}^*\left({\bm{\lambda}}^t\right), {\bm{\lambda}}^t\right) \mbox{ and }\\
		& \Delta_{\m{x}}^{t,s}=\tilde{L}_{\alpha}\left(\m{x}^{t,s}, {\bm{\lambda}}^{t}\right)-\tilde{L}_{\alpha}\left(\m{x}^*(\bm{\lambda}^{t}), {\bm{\lambda}}^{t}\right),\\
		& \mbox{ for } s=0,1,\ldots,S-1,
	\end{align*}
	where ${\bm{\lambda}}^*$ can be an optimal dual solution in  $\Lambda^*$ and $\Delta_{\bm{\lambda}}^t$ is the dual optimality gap.
	%and $\Delta_{\m{x}}^t$ and $\Delta_{\m{x}}^{t,s}$ are called the primal optimality gaps. 
	We define a potential function by combing the performance metrics $\Delta_{\bm{\lambda}}^t$ and $\Delta_{\m{x}}^t$ as
	\begin{equation}\label{merit}
		\Delta^t=7\Delta_{\bm{\lambda}}^t+\Delta_{\m{x}}^t.
	\end{equation}
	Then, the main idea for establishing global convergence is to bound $\Delta_{\bm{\lambda}}^t$ and $\Delta_{\m{x}}^t$ by coupled inequalities,
	and then integrate these inequalities to show the potential function $\Delta^t$ decays at a linear rate. 
	We first provide two necessary lemmas for deriving the bounds of $\Delta_{\bm{\lambda}}^t$ and $\Delta_{\m{x}}^t$.
	\begin{lemma}\label{lem3.2}
		Under \textbf{Assumption \ref{as2}}, the iterates generated from \textbf{Algorithm \ref{alg:Framwork2}} satisfy
		\begin{equation}\label{lem3.2 eq1}
			\left\|\m{Z}^{1/2} (\m{x}^{t+1}- \m{x}^*(\bm{\lambda}^t))\right\| \leq \frac{\rho^{1/2}}{\mu}\left\|\nabla_{\m{x}} \tilde{L}_{\alpha}\left(\m{x}^{t+1}, \bm{\lambda}^t\right)\right\| .
		\end{equation}
	\end{lemma}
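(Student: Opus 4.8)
The plan is to exploit the $\mu$-strong convexity of $\tilde{L}_{\alpha}(\cdot,\bm{\lambda}^t)$ recorded in \eqref{L_bound}, together with the fact that $\m{x}^*(\bm{\lambda}^t)$ is by definition \eqref{pu0} the (unique) minimizer of $\tilde{L}_{\alpha}(\cdot,\bm{\lambda}^t)$, so that its optimality condition \eqref{le} reads $\nabla_{\m{x}}\tilde{L}_{\alpha}(\m{x}^*(\bm{\lambda}^t),\bm{\lambda}^t)=\m{0}$. The only tool needed is the elementary fact that a $\mu$-strongly convex differentiable function $h$ satisfies $\|\nabla h(\m{u})-\nabla h(\m{w})\|\ge\mu\|\m{u}-\m{w}\|$ for all $\m{u},\m{w}$, which follows by combining the monotonicity estimate $(\nabla h(\m{u})-\nabla h(\m{w}))\tr(\m{u}-\m{w})\ge\mu\|\m{u}-\m{w}\|^2$ with the Cauchy--Schwarz inequality.

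First I would apply this inequality with $h(\cdot)=\tilde{L}_{\alpha}(\cdot,\bm{\lambda}^t)$, $\m{u}=\m{x}^{t+1}$ and $\m{w}=\m{x}^*(\bm{\lambda}^t)$. Since $\nabla_{\m{x}}\tilde{L}_{\alpha}(\m{x}^*(\bm{\lambda}^t),\bm{\lambda}^t)=\m{0}$, this yields
\[
\left\|\m{x}^{t+1}-\m{x}^*(\bm{\lambda}^t)\right\| \le \frac{1}{\mu}\left\|\nabla_{\m{x}}\tilde{L}_{\alpha}(\m{x}^{t+1},\bm{\lambda}^t)\right\|.
\]
Then I would multiply on the left by $\m{Z}^{1/2}$ and bound the spectral norm $\|\m{Z}^{1/2}\|_2=\big(\bm{\lambda}_{\max}(\m{Z})\big)^{1/2}=\rho^{1/2}$, which gives
\[
\left\|\m{Z}^{1/2}\big(\m{x}^{t+1}-\m{x}^*(\bm{\lambda}^t)\big)\right\| \le \rho^{1/2}\left\|\m{x}^{t+1}-\m{x}^*(\bm{\lambda}^t)\right\| \le \frac{\rho^{1/2}}{\mu}\left\|\nabla_{\m{x}}\tilde{L}_{\alpha}(\m{x}^{t+1},\bm{\lambda}^t)\right\|,
\]
which is exactly \eqref{lem3.2 eq1}.

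There is essentially no genuine obstacle here: the statement is a two-line consequence of strong convexity plus a trivial operator-norm bound. The only points deserving a moment's attention are making sure the gradient of $\tilde{L}_{\alpha}(\cdot,\bm{\lambda}^t)$ vanishes at $\m{x}^*(\bm{\lambda}^t)$ (this is precisely \eqref{le}) and tracking the exponent on $\rho$ so that $\|\m{Z}^{1/2}\|_2=\rho^{1/2}$ produces the stated constant $\rho^{1/2}/\mu$ rather than $\rho/\mu$.
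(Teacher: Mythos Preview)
Your proposal is correct and matches the paper's own proof essentially line for line: the paper also bounds $\|\m{Z}^{1/2}(\m{x}^{t+1}-\m{x}^*(\bm{\lambda}^t))\|$ by $\rho^{1/2}\|\m{x}^{t+1}-\m{x}^*(\bm{\lambda}^t)\|$ and then invokes the $\mu$-strong convexity of $\tilde{L}_{\alpha}(\cdot,\bm{\lambda}^t)$ together with $\nabla_{\m{x}}\tilde{L}_{\alpha}(\m{x}^*(\bm{\lambda}^t),\bm{\lambda}^t)=\m{0}$ to conclude. The only difference is the order in which the two estimates are presented.
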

	\begin{proof}
		
		By the definition of $\rho = \lambda_{\max}(\m{Z})$, we have
		\begin{equation}\label{lem3.2 eq2}
			\left\|\m{Z}^{1/2} (\m{x}^{t+1}- \m{x}^*(\bm{\lambda}^t))\right\| \leq \rho^{1/2} \left\|\m{x}^{t+1}- \m{x}^*(\bm{\lambda}^t)\right\|.
		\end{equation}
		Using the $\mu$-strong convexity of $\tilde{L}_{\alpha}(\cdot,\bm{\lambda}^t)$, we have
		\begin{align}\label{lem3.2 eq3}
			\left\|\nabla_{\m{x}} \tilde{L}_{\alpha}\left(\m{x}^{t+1}, \bm{\lambda}^t\right)\right\| 
			=\left\|\nabla_{\m{x}} \tilde{L}_{\alpha}\left(\m{x}^{t+1}, \bm{\lambda}^t\right)-\nabla_{\m{x}} \tilde{L}_{\alpha}\left(\m{x}^*(\bm{\lambda}^t), \bm{\lambda}^t\right)\right\| 
			\geq \mu\left\|\m{x}^{t+1}-\m{x}^*(\bm{\lambda}^t)\right\| .
		\end{align}
		Thus, combing \eqref{lem3.2 eq2} and \eqref{lem3.2 eq3}, we obtain \eqref{lem3.2 eq1}.
	\end{proof}
	\begin{lemma}\label{lem3.3}
		Under \textbf{Assumptions \ref{as1}}, \textbf{\ref{as2}} and \textbf{\ref{as3}}, the iterates generated from \textbf{Algorithm \ref{alg:Framwork2}} satisfy
		\begin{align}\label{lem3.3 eq1}
			\left\|\m{Z}^{1/2} \m{x}^{t+1}\right\|_{\gamma \m{P}^t}^2 
			\leq \frac{2 \rho \gamma_2}{\mu^2}\left\|\nabla_{\m{x}} \tilde{L}_{\alpha}\left(\m{x}^{t+1}, \bm{\lambda}^t\right)\right\|^2+2\left\|\nabla g(\bm{\lambda}^t)\right\|_{\gamma \m{P}^t}^2
		\end{align}
		and
		\begin{align}\label{lem3.3 eq2}
			\left\|\bm{\lambda}^{t+1}-\bm{\lambda}^t\right\|^2 
			\leq \frac{2 \rho (\gamma_2)^2}{\mu^2}\left\|\nabla_{\m{x}} \tilde{L}_{\alpha}\left(\m{x}^{t+1}, \bm{\lambda}^t\right)\right\|^2+2\gamma_2\left\|\nabla g(\bm{\lambda}^t)\right\|_{\gamma \m{P}^t}^2.
		\end{align}
	\end{lemma}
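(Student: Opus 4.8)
The plan is to reduce both bounds to a single decomposition of $\m{Z}^{1/2}\m{x}^{t+1}$ around the point $\m{Z}^{1/2}\m{x}^*(\bm{\lambda}^t)$, which by \eqref{grad_g} equals $\nabla g(\bm{\lambda}^t)$. First I would write $\m{Z}^{1/2}\m{x}^{t+1} = \m{Z}^{1/2}\bigl(\m{x}^{t+1} - \m{x}^*(\bm{\lambda}^t)\bigr) + \nabla g(\bm{\lambda}^t)$ and apply the elementary inequality $\|\m{a}+\m{b}\|_{\m{M}}^2 \le 2\|\m{a}\|_{\m{M}}^2 + 2\|\m{b}\|_{\m{M}}^2$, valid for any $\m{M} \succeq \m{0}$, with $\m{M} = \gamma\m{P}^t$. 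This splits $\|\m{Z}^{1/2}\m{x}^{t+1}\|_{\gamma\m{P}^t}^2$ into a term measuring the primal inexactness $\m{Z}^{1/2}\bigl(\m{x}^{t+1} - \m{x}^*(\bm{\lambda}^t)\bigr)$ plus the term $2\|\nabla g(\bm{\lambda}^t)\|_{\gamma\m{P}^t}^2$ that already appears on the right-hand side of \eqref{lem3.3 eq1}.

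Next I would bound the inexactness term: using $\gamma\m{P}^t \preceq \gamma_2\m{I}$ from \eqref{bound2} to pass from the $\gamma\m{P}^t$-weighted norm to the Euclidean norm, and then invoking \textbf{Lemma \ref{lem3.2}} to replace $\|\m{Z}^{1/2}(\m{x}^{t+1} - \m{x}^*(\bm{\lambda}^t))\|$ by $(\rho^{1/2}/\mu)\|\nabla_{\m{x}}\tilde{L}_{\alpha}(\m{x}^{t+1},\bm{\lambda}^t)\|$. Squaring produces the factor $\rho\gamma_2/\mu^2$, and combined with the factor $2$ from the previous step this gives exactly the first term $\frac{2\rho\gamma_2}{\mu^2}\|\nabla_{\m{x}}\tilde{L}_{\alpha}(\m{x}^{t+1},\bm{\lambda}^t)\|^2$ of \eqref{lem3.3 eq1}, completing that bound.

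For \eqref{lem3.3 eq2}, I would use the dual update in the form \eqref{du6}, namely $\bm{\lambda}^{t+1} - \bm{\lambda}^t = \gamma\m{P}^t\m{Z}^{1/2}\m{x}^{t+1}$, so that $\|\bm{\lambda}^{t+1} - \bm{\lambda}^t\|^2 = (\m{Z}^{1/2}\m{x}^{t+1})\tr(\gamma\m{P}^t)^2(\m{Z}^{1/2}\m{x}^{t+1})$. The operator inequality $(\gamma\m{P}^t)^2 \preceq \gamma_2(\gamma\m{P}^t)$, which follows from $\gamma\m{P}^t \preceq \gamma_2\m{I}$ by sandwiching with the symmetric positive semidefinite square root of $\gamma\m{P}^t$, then yields $\|\bm{\lambda}^{t+1} - \bm{\lambda}^t\|^2 \le \gamma_2\|\m{Z}^{1/2}\m{x}^{t+1}\|_{\gamma\m{P}^t}^2$; substituting the already-proved \eqref{lem3.3 eq1} gives \eqref{lem3.3 eq2}. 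I do not expect any serious obstacle here; the only points demanding care are correctly tracking the weighted versus Euclidean norms when applying $\gamma\m{P}^t \preceq \gamma_2\m{I}$, and cleanly justifying the operator inequality $(\gamma\m{P}^t)^2 \preceq \gamma_2(\gamma\m{P}^t)$.
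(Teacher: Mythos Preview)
Your proposal is correct and follows essentially the same approach as the paper's own proof: the same decomposition $\m{Z}^{1/2}\m{x}^{t+1} = \m{Z}^{1/2}(\m{x}^{t+1}-\m{x}^*(\bm{\lambda}^t)) + \nabla g(\bm{\lambda}^t)$, the same use of \textbf{Lemma~\ref{lem3.2}} together with $\gamma\m{P}^t \preceq \gamma_2\m{I}$, and the same reduction of \eqref{lem3.3 eq2} to \eqref{lem3.3 eq1} via $\|\gamma\m{P}^t\m{Z}^{1/2}\m{x}^{t+1}\|^2 \le \gamma_2\|\m{Z}^{1/2}\m{x}^{t+1}\|_{\gamma\m{P}^t}^2$.
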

	\begin{proof}
		Using the triangle inequality, we have
	\begin{align*}
		\left\|\m{Z}^{1/2} \m{x}^{t+1}\right\|_{\gamma \m{P}^t}^2 
		\leq &2\left\|\m{Z}^{1/2} (\m{x}^{t+1}- \m{x}^*(\bm{\lambda}^t))\right\|_{\gamma \m{P}^t}^2+2\left\|\m{Z}^{1/2} \m{x}^*(\bm{\lambda}^t)\right\|_{\gamma \m{P}^t}^2 \\
		=&2\left\|\m{Z}^{1/2} (\m{x}^{t+1}- \m{x}^*(\bm{\lambda}^t))\right\|_{\gamma \m{P}^t}^2+2\left\|\nabla g(\bm{\lambda}^t)\right\|_{\gamma \m{P}^t}^2 \\
		\leq &\frac{2 \rho \gamma_2}{\mu^2}\left\|\nabla_{\m{x}} \tilde{L}_{\alpha}\left(\m{x}^{t+1}, \bm{\lambda}^t\right)\right\|^2+2\left\|\nabla g(\bm{\lambda}^t)\right\|_{\gamma \m{P}^t}^2,
	\end{align*}
	where the equality is due to \eqref{grad_g}, and the last inequality is obtained by \textbf{Lemma \ref{lem3.2}}.
	Since 	$\rho=\bm{\lambda}_{\max}(\m{Z})$, and $\gamma \m{P}^t \preceq \gamma_2 \m{I}$, we have from \eqref{du6} and \eqref{lem3.3 eq1} that 
	\begin{align*}
		\left\|\bm{\lambda}^{t+1}-\bm{\lambda}^{t}\right\|^2
		= &\left\|\gamma \m{P}^t \m{Z}^{1/2} \m{x}^{t+1}\right\|^2 
		\leq  \gamma_2\left\|\m{Z}^{1/2} \m{x}^{t+1}\right\|_{\gamma \m{P}^t}^2 \\
		\leq &\frac{2 \rho (\gamma_2)^2}{\mu^2}\left\|\nabla_{\m{x}} \tilde{L}_{\alpha}\left(\m{x}^{t+1}, \bm{\lambda}^t\right)\right\|^2+2\gamma_2\left\|\nabla g(\bm{\lambda}^t)\right\|_{\gamma \m{P}^t}^2.
	\end{align*}
	\end{proof}
	
	We now have the reduction on the dual optimality gap $\Delta_{\bm{\lambda}}^t$.
	\begin{lemma}\label{lem3.4}
		Under \textbf{Assumptions \ref{as1}}, \textbf{\ref{as2}} and \textbf{\ref{as3}}, the dual optimality gap $\Delta_{\bm{\lambda}}^{t}$ satisfies
		\begin{align}\label{lem3.4 eq1}
			\Delta_{\bm{\lambda}}^{t+1} \leq  \Delta_{\bm{\lambda}}^{t}-\left(\frac{1}{2}-L_g\gamma_2\right)\left\|\nabla g(\bm{\lambda}^{t})\right\|_{\gamma \m{P}^t}^2 
			 +\frac{2L_g\rho (\gamma_2)^2+ \rho \gamma_2}{2\mu^2}\left\|\nabla_{\m{x}} \tilde{L}_{\alpha}\left(\m{x}^{t+1}, \bm{\lambda}^t\right)\right\|^2.
		\end{align}
	\end{lemma}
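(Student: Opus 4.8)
The plan is to regard the dual recursion \eqref{du6}, $\bm{\lambda}^{t+1}=\bm{\lambda}^t+\gamma\m{P}^t\m{Z}^{1/2}\m{x}^{t+1}$, as an inexact preconditioned gradient-ascent step on the concave dual function $g$, whose exact gradient is $\nabla g(\bm{\lambda}^t)=\m{Z}^{1/2}\m{x}^*(\bm{\lambda}^t)$ by \eqref{grad_g}, and to control the change in the dual optimality gap through the $L_g$-smoothness of $g$ from \textbf{Lemma \ref{lem3.1}}. First I would apply the descent-type inequality for the $L_g$-smooth concave $g$, $g(\bm{\lambda}^{t+1})\ge g(\bm{\lambda}^t)+\langle\nabla g(\bm{\lambda}^t),\bm{\lambda}^{t+1}-\bm{\lambda}^t\rangle-\tfrac{L_g}{2}\|\bm{\lambda}^{t+1}-\bm{\lambda}^t\|^2$, so that $\Delta_{\bm{\lambda}}^{t+1}-\Delta_{\bm{\lambda}}^t=g(\bm{\lambda}^t)-g(\bm{\lambda}^{t+1})\le-\langle\nabla g(\bm{\lambda}^t),\bm{\lambda}^{t+1}-\bm{\lambda}^t\rangle+\tfrac{L_g}{2}\|\bm{\lambda}^{t+1}-\bm{\lambda}^t\|^2$. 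It then remains to estimate the two terms on the right.

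For the linear term I would substitute $\bm{\lambda}^{t+1}-\bm{\lambda}^t=\gamma\m{P}^t\m{Z}^{1/2}\m{x}^{t+1}$ and split $\m{Z}^{1/2}\m{x}^{t+1}=\nabla g(\bm{\lambda}^t)+\m{Z}^{1/2}(\m{x}^{t+1}-\m{x}^*(\bm{\lambda}^t))$ using \eqref{grad_g}. This produces $-\|\nabla g(\bm{\lambda}^t)\|_{\gamma\m{P}^t}^2$ plus a cross term $-\langle\nabla g(\bm{\lambda}^t),\gamma\m{P}^t\m{Z}^{1/2}(\m{x}^{t+1}-\m{x}^*(\bm{\lambda}^t))\rangle$, which I would bound by Young's inequality in the $\gamma\m{P}^t$-weighted inner product (legitimate since $\gamma\m{P}^t\succ0$) and then estimate $\|\m{Z}^{1/2}(\m{x}^{t+1}-\m{x}^*(\bm{\lambda}^t))\|_{\gamma\m{P}^t}^2\le\gamma_2\|\m{Z}^{1/2}(\m{x}^{t+1}-\m{x}^*(\bm{\lambda}^t))\|^2$ together with the squared form of \textbf{Lemma \ref{lem3.2}}. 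This yields $-\langle\nabla g(\bm{\lambda}^t),\bm{\lambda}^{t+1}-\bm{\lambda}^t\rangle\le-\tfrac12\|\nabla g(\bm{\lambda}^t)\|_{\gamma\m{P}^t}^2+\tfrac{\rho\gamma_2}{2\mu^2}\|\nabla_{\m{x}}\tilde{L}_{\alpha}(\m{x}^{t+1},\bm{\lambda}^t)\|^2$. For the quadratic term $\tfrac{L_g}{2}\|\bm{\lambda}^{t+1}-\bm{\lambda}^t\|^2$ I would invoke \eqref{lem3.3 eq2} of \textbf{Lemma \ref{lem3.3}} directly, obtaining $\tfrac{L_g\rho\gamma_2^2}{\mu^2}\|\nabla_{\m{x}}\tilde{L}_{\alpha}(\m{x}^{t+1},\bm{\lambda}^t)\|^2+L_g\gamma_2\|\nabla g(\bm{\lambda}^t)\|_{\gamma\m{P}^t}^2$. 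Adding the two bounds and collecting the coefficients of $\|\nabla g(\bm{\lambda}^t)\|_{\gamma\m{P}^t}^2$ and of $\|\nabla_{\m{x}}\tilde{L}_{\alpha}(\m{x}^{t+1},\bm{\lambda}^t)\|^2$ reproduces exactly \eqref{lem3.4 eq1}.

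The main obstacle is the mismatch between the exact dual gradient $\m{Z}^{1/2}\m{x}^*(\bm{\lambda}^t)$ and the quantity actually used by the algorithm, $\m{Z}^{1/2}\m{x}^{t+1}$, since $\m{x}^{t+1}$ is only an inexact minimizer of $\tilde{L}_{\alpha}(\cdot,\bm{\lambda}^t)$ after $S$ quasi-Newton primal steps; handling the error $\m{Z}^{1/2}(\m{x}^{t+1}-\m{x}^*(\bm{\lambda}^t))$ so that it appears only through $\|\nabla_{\m{x}}\tilde{L}_{\alpha}(\m{x}^{t+1},\bm{\lambda}^t)\|$ is precisely what \textbf{Lemmas \ref{lem3.2}} and \textbf{\ref{lem3.3}} are designed for. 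A secondary technicality is that the ascent operator is the matrix $\gamma\m{P}^t$ rather than a scalar stepsize, so every estimate must be carried out in the $\gamma\m{P}^t$-weighted norm, using the uniform bounds $\gamma\alpha\m{I}\preceq\gamma\m{P}^t\preceq\gamma_2\m{I}$; one also relies on $g$ being $L_g$-smooth on all of $R^{np}$, as in \textbf{Lemma \ref{lem3.1}}, so the descent inequality is valid at the iterates.
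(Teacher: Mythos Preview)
Your proposal is correct and follows essentially the same approach as the paper: both start from the $L_g$-smoothness inequality for $g$, split the linear term via $\m{Z}^{1/2}\m{x}^{t+1}=\nabla g(\bm{\lambda}^t)+\m{Z}^{1/2}(\m{x}^{t+1}-\m{x}^*(\bm{\lambda}^t))$ and apply Young's inequality in the $\gamma\m{P}^t$-norm together with \textbf{Lemma~\ref{lem3.2}}, and bound the quadratic term by \eqref{lem3.3 eq2} of \textbf{Lemma~\ref{lem3.3}}. The only difference is phrasing: the paper writes the same Young step as $2\langle\m{a},\m{b}\rangle\ge-\|\m{a}\|^2-\|\m{b}\|^2$, which is exactly your weighted Young inequality.
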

	\begin{proof}
	Considering the $L_g$-Lipschitz continuity of $\nabla g(\cdot)$ given by \textbf{Lemma \ref{lem3.1}}, we have from \eqref{du6} that
	\begin{align}\label{lem3.4 eq2}
		g(\bm{\lambda}^{t+1}) 
		\geq &g(\bm{\lambda}^{t})+\left\langle\nabla g(\bm{\lambda}^{t}), \bm{\lambda}^{t+1}-\bm{\lambda}^{t}\right\rangle-\frac{L_g}{2}\left\|\bm{\lambda}^{t+1}-\bm{\lambda}^{t}\right\|^2 \\
		= &g(\bm{\lambda}^{t})+\left\langle\nabla g(\bm{\lambda}^{t}), \gamma \m{P}^t \m{Z}^{1/2} \m{x}^{t+1}\right\rangle-\frac{L_g}{2}\left\|\bm{\lambda}^{t+1}-\bm{\lambda}^{t}\right\|^2.\notag
	\end{align}
	In addition, by the inequality that $2\langle\m{a},\m{b}\rangle \geq -\|\m{a}\|^2- \|\m{b}\|^2$ for any $\m{a}, \m{b} \in R^{np}$, we have
	\begin{align}\label{lem3.4 eq3}
		\left\langle\nabla g(\bm{\lambda}^{t}), \gamma \m{P}^t \m{Z}^{1/2} \m{x}^{t+1}\right\rangle 
		=&\left\langle\nabla g(\bm{\lambda}^{t}), \gamma \m{P}^t \m{Z}^{1/2} \m{x}^{t+1}-\gamma\m{P}^t\nabla g(\bm{\lambda}^{t})\right\rangle +\left\langle\nabla g(\bm{\lambda}^{t}), \gamma \m{P}^t \nabla g(\bm{\lambda}^{t})\right\rangle \\
		\geq&-\frac{1}{2}\left\|\nabla g(\bm{\lambda}^{t})\right\|_{\gamma\m{P}^t}^2-\frac{1}{2}\left\|\m{Z}^{1/2} \m{x}^{t+1}-\nabla g(\bm{\lambda}^{t})\right\|_{\gamma \m{P}^t}^2 +\left\|\nabla g(\bm{\lambda}^{t})\right\|_{ \gamma\m{P}^t}^2 \notag\\
		=&\frac{1}{2}\left\|\nabla g(\bm{\lambda}^{t})\right\|_{\gamma \m{P}^t}^2-\frac{1}{2}\left\|\m{Z}^{1/2} \m{x}^{t+1}-\m{Z}^{1/2}\m{x}^*(\bm{\lambda}^t))\right\|_{\gamma \m{P}^t}^2 \notag \\
		\geq & \frac{1}{2}\left\|\nabla g(\bm{\lambda}^{t})\right\|_{\gamma \m{P}^t}^2-\frac{ \rho \gamma_2}{2\mu^2}\left\|\nabla_{\m{x}} \tilde{L}_{\alpha}\left(\m{x}^{t+1}, \bm{\lambda}^{t}\right)\right\|^2,\notag
	\end{align}
	where we use \eqref{grad_g} in the second equality and \textbf{Lemma \ref{lem3.2}} in the second inequality . By substituting \eqref{lem3.3 eq2} and \eqref{lem3.4 eq3} into \eqref{lem3.4 eq2}, we have 
	\begin{align*}
		g(\bm{\lambda}^{t+1}) \geq  g(\bm{\lambda}^{t})+\left(\frac{1}{2}-L_g\gamma_2\right)\left\|\nabla g(\bm{\lambda}^{t})\right\|_{\gamma \m{P}^t}^2
		-\frac{2L_g\rho (\gamma_2)^2+ \rho \gamma_2}{2\mu^2}\left\|\nabla_{\m{x}} \tilde{L}_{\alpha}\left(\m{x}^{t+1}, \bm{\lambda}^t\right)\right\|^2.
	\end{align*}
	Then, the lemma follows from subtracting $g(\bm{\lambda}^*)$ in the above inequality. 
	\end{proof}
	We bound $\Delta_{\m{x}}^{t+1}$ in two steps. We firstly give $\Delta_{\m{x}}^{t+1}$ a bound related to $\Delta_{\m{x}}^{t,S-1}$.
	\begin{lemma}\label{lem3.5}
		% Consider the iterates in \textbf{Algorithm \ref{alg:Framwork2}} at $t$-th outer iteration and $(S-1)$-th inner iteration.
		Under \textbf{Assumptions \ref{as1}}, \textbf{\ref{as2}} and \textbf{\ref{as3}}, 
		\begin{itemize}
			\item [(1) ] for $S=1$, we have
			\begin{align}\label{lem3.5 eq0}
				\Delta_{\m{x}}^{t+1} \leq  \Delta_{\m{x}}^{t}+2\left\|\nabla g(\bm{\lambda}^t)\right\|_{\gamma \m{P}^t}^2  +\Delta_{{\bm{\lambda}}}^{t}-\Delta_{{\bm{\lambda}}}^{t+1}
				-\left\|\nabla_{\m{x}} \tilde{L}_{\alpha}\left(\m{x}^{t}, \bm{\lambda}^{t}\right)\right\|^2_{\beta \tilde{\m{B}}^{t}-\frac{\C{L}}{2}(\beta \tilde{\m{B}}^{t} )^2 -\frac{2 \rho \gamma_2}{\mu^2}(1+\C{L}\beta_2)\m{I}};
			\end{align}
			\item [(2) ] for $S>1$, we have
			\begin{align*}
				\Delta_{\m{x}}^{t+1} \leq & \Delta_{\m{x}}^{t,S-1}+\frac{2 \rho \gamma_2}{\mu^2}\left\|\nabla_{\m{x}} \tilde{L}_{\alpha}\left(\m{x}^{t+1}, \bm{\lambda}^t\right)\right\|^2
				+2\left\|\nabla g(\bm{\lambda}^t)\right\|_{\gamma \m{P}^t}^2 +\Delta_{{\bm{\lambda}}}^{t}-\Delta_{{\bm{\lambda}}}^{t+1} \\
				& -\left\|\nabla_{\m{x}} \tilde{L}_{\alpha}\left(\m{x}^{t,S-1}, \bm{\lambda}^{t}\right)\right\|^2_{\beta \tilde{\m{B}}^{t,S-1}-\frac{\C{L}}{2}(\beta \tilde{\m{B}}^{t,S-1} )^2 }.
			\end{align*}
		\end{itemize}
	\end{lemma}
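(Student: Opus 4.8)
The plan is to telescope $\Delta_{\m{x}}^{t+1}$ back to $\Delta_{\m{x}}^{t,S-1}$ by inserting two pivot points along the path $(\m{x}^{t,S-1},\bm{\lambda}^t)\to(\m{x}^{t+1},\bm{\lambda}^t)\to(\m{x}^{t+1},\bm{\lambda}^{t+1})$. Recalling $g(\bm{\lambda})=\tilde{L}_{\alpha}(\m{x}^*(\bm{\lambda}),\bm{\lambda})$, we write $\Delta_{\m{x}}^{t+1}=\tilde{L}_{\alpha}(\m{x}^{t+1},\bm{\lambda}^{t+1})-g(\bm{\lambda}^{t+1})$ and $\Delta_{\m{x}}^{t,S-1}=\tilde{L}_{\alpha}(\m{x}^{t,S-1},\bm{\lambda}^t)-g(\bm{\lambda}^t)$, so that
\[
\Delta_{\m{x}}^{t+1}-\Delta_{\m{x}}^{t,S-1}
=\underbrace{\big[\tilde{L}_{\alpha}(\m{x}^{t+1},\bm{\lambda}^{t+1})-\tilde{L}_{\alpha}(\m{x}^{t+1},\bm{\lambda}^t)\big]}_{(\mathrm I)}
+\underbrace{\big[\tilde{L}_{\alpha}(\m{x}^{t+1},\bm{\lambda}^t)-\tilde{L}_{\alpha}(\m{x}^{t,S-1},\bm{\lambda}^t)\big]}_{(\mathrm{II})}
-\underbrace{\big[g(\bm{\lambda}^{t+1})-g(\bm{\lambda}^t)\big]}_{(\mathrm{III})},
\]
and estimate the three brackets using results already established.

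For $(\mathrm I)$: since $\tilde{L}_{\alpha}(\m{x},\cdot)$ is affine in $\bm{\lambda}$ with gradient $\m{Z}^{1/2}\m{x}$, and the dual step \eqref{du6} gives $\bm{\lambda}^{t+1}-\bm{\lambda}^t=\gamma\m{P}^t\m{Z}^{1/2}\m{x}^{t+1}$, one has $(\mathrm I)=\langle\bm{\lambda}^{t+1}-\bm{\lambda}^t,\m{Z}^{1/2}\m{x}^{t+1}\rangle=\|\m{Z}^{1/2}\m{x}^{t+1}\|_{\gamma\m{P}^t}^2$, which is exactly the quantity bounded in \textbf{Lemma \ref{lem3.3}} by \eqref{lem3.3 eq1}, producing $\frac{2\rho\gamma_2}{\mu^2}\|\nabla_{\m{x}}\tilde{L}_{\alpha}(\m{x}^{t+1},\bm{\lambda}^t)\|^2+2\|\nabla g(\bm{\lambda}^t)\|_{\gamma\m{P}^t}^2$. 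For $(\mathrm{II})$: apply the descent inequality (valid because $\nabla_{\m{x}\m{x}}\tilde{L}_{\alpha}\preceq\C{L}\m{I}$ by \eqref{L_bound}) to the single primal update \eqref{x-update-2} with $s=S-1$, i.e. $\m{x}^{t+1}=\m{x}^{t,S-1}-\beta\tilde{\m{B}}^{t,S-1}\nabla_{\m{x}}\tilde{L}_{\alpha}(\m{x}^{t,S-1},\bm{\lambda}^t)$; since $\tilde{\m{B}}^{t,s}=(\m{B}^{t,s})^{-1}-\theta\alpha(\m{B}^{t,s})^{-1}\m{Z}(\m{B}^{t,s})^{-1}$ is symmetric, the quadratic remainder is $\frac{\C{L}}{2}(\beta\tilde{\m{B}}^{t,S-1})^2$ and $(\mathrm{II})\le-\|\nabla_{\m{x}}\tilde{L}_{\alpha}(\m{x}^{t,S-1},\bm{\lambda}^t)\|^2_{\beta\tilde{\m{B}}^{t,S-1}-\frac{\C{L}}{2}(\beta\tilde{\m{B}}^{t,S-1})^2}$. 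For $(\mathrm{III})$: by $g(\bm{\lambda})=\tilde{L}_{\alpha}(\m{x}^*(\bm{\lambda}),\bm{\lambda})$, the bracket $-(\mathrm{III})=g(\bm{\lambda}^t)-g(\bm{\lambda}^{t+1})$ is, by the definition $\Delta_{\bm{\lambda}}^{t}=g(\bm{\lambda}^*)-g(\bm{\lambda}^{t})$, a telescoping difference of consecutive dual optimality gaps. Summing the three estimates yields the assertion for $S>1$.

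For $S=1$ we have $\m{x}^{t,S-1}=\m{x}^{t,0}=\m{x}^t$ and $\tilde{\m{B}}^{t,S-1}=\tilde{\m{B}}^t$, so the descent term from $(\mathrm{II})$ is already centred at $\m{x}^t$; it remains only to transfer the surviving $\frac{2\rho\gamma_2}{\mu^2}\|\nabla_{\m{x}}\tilde{L}_{\alpha}(\m{x}^{t+1},\bm{\lambda}^t)\|^2$ from $(\mathrm I)$ to $\m{x}^t$. Using $\m{x}^{t+1}-\m{x}^t=-\beta\tilde{\m{B}}^t\nabla_{\m{x}}\tilde{L}_{\alpha}(\m{x}^t,\bm{\lambda}^t)$ together with the $\C{L}$-Lipschitz continuity of $\nabla_{\m{x}}\tilde{L}_{\alpha}(\cdot,\bm{\lambda}^t)$ and the bound $\beta\tilde{\m{B}}^t\preceq\beta_2\m{I}$, one controls $\|\nabla_{\m{x}}\tilde{L}_{\alpha}(\m{x}^{t+1},\bm{\lambda}^t)\|^2$ by a constant multiple of $\|\nabla_{\m{x}}\tilde{L}_{\alpha}(\m{x}^t,\bm{\lambda}^t)\|^2$, after which the two gradient contributions at $\m{x}^t$ merge into the single weighted norm $\|\nabla_{\m{x}}\tilde{L}_{\alpha}(\m{x}^t,\bm{\lambda}^t)\|^2_{\beta\tilde{\m{B}}^t-\frac{\C{L}}{2}(\beta\tilde{\m{B}}^t)^2-\frac{2\rho\gamma_2}{\mu^2}(1+\C{L}\beta_2)\m{I}}$ appearing in the statement. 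I expect the main difficulty to be organizational rather than analytic: the pivots must be chosen so that $(\mathrm I)$ reproduces \textbf{Lemma \ref{lem3.3}} verbatim and $(\mathrm{II})$ is a one-step (not $S$-step) descent inequality, leaving $\Delta_{\m{x}}^{t,S-1}$ and the remaining $S-1$ inner steps to a companion estimate; note also that this lemma never requires the displayed weight matrix to be positive semidefinite, so the bound holds under \textbf{Assumptions \ref{as1}}, \textbf{\ref{as2}} and \textbf{\ref{as3}} alone, with the positivity constraint on $\beta$ postponed to the parameter choice in the convergence theorem.
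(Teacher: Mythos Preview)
Your proposal is correct and follows essentially the same approach as the paper: the paper also splits $\Delta_{\m{x}}^{t+1}$ into the three pieces $(\mathrm A)=(\mathrm I)$, $(\mathrm B)=(\mathrm{II})+\Delta_{\m{x}}^{t,S-1}$, $(\mathrm C)=-(\mathrm{III})$, bounds $(\mathrm A)$ via the dual step identity and \textbf{Lemma~\ref{lem3.3}}, bounds $(\mathrm B)$ by the $\C{L}$-descent inequality applied to the last inner step \eqref{x-update-2}, and rewrites $(\mathrm C)$ as $\Delta_{\bm{\lambda}}^{t}-\Delta_{\bm{\lambda}}^{t+1}$. For $S=1$ the paper likewise transfers $\|\nabla_{\m{x}}\tilde{L}_{\alpha}(\m{x}^{t+1},\bm{\lambda}^t)\|^2$ to $\m{x}^t$ using Lipschitz continuity and $\beta\tilde{\m{B}}^t\preceq\beta_2\m{I}$, then absorbs the resulting term into the weighted norm exactly as you describe.
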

	\begin{proof}
		At first, we split $\Delta_{\m{x}}^{t+1}$ into three terms as follows
	\begin{align*}
		\Delta_{\m{x}}^{t+1}= & \tilde{L}_{\alpha}\left(\m{x}^{t+1}, \bm{\lambda}^{t+1}\right)-\tilde{L}_{\alpha}\left(\m{x}^*(\bm{\lambda}^{t+1}), \bm{\lambda}^{t+1}\right) \\
		= & \underbrace{\tilde{L}_{\alpha}\left(\m{x}^{t+1}, \bm{\lambda}^{t+1}\right)-\tilde{L}_{\alpha}\left(\m{x}^{t+1}, \bm{\lambda}^{t}\right)}_{\text {term (A)}} \\
		& +\underbrace{\tilde{L}_{\alpha}\left(\m{x}^{t+1}, \bm{\lambda}^{t}\right)-\tilde{L}_{\alpha}\left(\m{x}^*(\bm{\lambda}^{t}), \bm{\lambda}^{t}\right)}_{\text {term }(\m{B})} \\
		& +\underbrace{\tilde{L}_{\alpha}\left(\m{x}^*(\bm{\lambda}^{t}), \bm{\lambda}^{t}\right)-\tilde{L}_{\alpha}\left(\m{x}^*(\bm{\lambda}^{t+1}), \bm{\lambda}^{t+1}\right)}_{\text {term }(\m{C})}.\\
	\end{align*}
	In the following, We give upper bounds on terms (A)-(C), respectively. For term (A), we have from \eqref{du6} that
	\begin{align}\label{lem3.5 eq1}
		\tilde{L}_{\alpha}\left(\m{x}^{t+1}, \bm{\lambda}^{t+1}\right)-\tilde{L}_{\alpha}\left(\m{x}^{t+1}, \bm{\lambda}^{t}\right)
		=&\left(\bm{\lambda}^{t+1}-\bm{\lambda}^{t}\right)\tr \m{Z}^{1/2} \m{x}^{t+1} \\
		=&\left(\gamma \m{P}^t \m{Z}^{1/2} \m{x}^{t+1}\right)\tr\m{Z}^{1/2}  \m{x}^{t+1}
		= \left\|\m{Z}^{1/2} \m{x}^{t+1}\right\|_{\gamma \m{P}^t}^2\notag\\
		\leq&\frac{2 \rho \gamma_2}{\mu^2}\left\|\nabla_{\m{x}} \tilde{L}_{\alpha}\left(\m{x}^{t+1}, \bm{\lambda}^t\right)\right\|^2+2\left\|\nabla g(\bm{\lambda}^t)\right\|_{\gamma \m{P}^t}^2,\notag
	\end{align}
	where the inequality follows from \eqref{lem3.3 eq1}.
	For term (B), using the $\C{L}$-Lipschitz continuity of $\tilde{L}_{\alpha}(\cdot,\bm{\lambda}^{t})$ and \eqref{x-update-2}, we have
	\begin{align}\label{lem3.5 eq2}
		&\tilde{L}_{\alpha}\left(\m{x}^{t+1}, \bm{\lambda}^{t}\right)-\tilde{L}_{\alpha}\left(\m{x}^*(\bm{\lambda}^{t}), \bm{\lambda}^{t}\right)\\
		\leq & \Delta_{\m{x}}^{t,S-1}+\nabla_{\m{x}} \tilde{L}_{\alpha}\left(\m{x}^{t,S-1}, \bm{\lambda}^{t}\right)\tr\left(\m{x}^{t+1}-\m{x}^{t,S-1}\right)  +\frac{\C{L}}{2}\left\|\m{x}^{t+1}-\m{x}^{t,S-1}\right\|^2 \notag\\
		= & \Delta_{\m{x}}^{t,S-1} -\left\| \nabla_{\m{x}} \tilde{L}_{\alpha}\left(\m{x}^{t,S-1}, \bm{\lambda}^{t}\right)\right\|^2_{\beta \tilde{\m{B}}^{t,S-1}}  +\frac{\C{L}}{2}\left\|\beta \tilde{\m{B}}^{t,S-1} \nabla_{\m{x}} \tilde{L}_{\alpha}\left(\m{x}^{t,S-1}, \bm{\lambda}^{t}\right)\right\|^2.\notag
	\end{align}
	For term (C), since $\tilde{L}_{\alpha}\left(\m{x}^*(\bm{\lambda}), {\bm{\lambda}}\right)=g(\bm{\lambda})$ for any $\bm{\lambda} \in R^{np}$, we have
	\begin{align}\label{lem3.5 eq3}
		\tilde{L}_{\alpha}\left(\m{x}^*(\bm{\lambda}^{t}), {\bm{\lambda}}^{t}\right)-\tilde{L}_{\alpha}\left(\m{x}^*(\bm{\lambda}^{t+1}), {\bm{\lambda}}^{t+1}\right)
		=g(\bm{\lambda}^{t})-g(\bm{\lambda}^{t+1}) 
		=\Delta_{\bm{\lambda}}^{t}-\Delta_{\bm{\lambda}}^{t+1} .
	\end{align}
	Adding \eqref{lem3.5 eq1}, \eqref{lem3.5 eq2} and \eqref{lem3.5 eq3} together, we obtain
	\begin{align}\label{lem3.5 eq4}
		\Delta_{\m{x}}^{t+1} \leq & \Delta_{\m{x}}^{t,S-1}+\frac{2 \rho \gamma_2}{\mu^2}\left\|\nabla_{\m{x}} \tilde{L}_{\alpha}\left(\m{x}^{t+1}, \bm{\lambda}^t\right)\right\|^2 +2\left\|\nabla g(\bm{\lambda}^t)\right\|_{\gamma \m{P}^t}^2  +\Delta_{{\bm{\lambda}}}^{t}-\Delta_{{\bm{\lambda}}}^{t+1}\\
		& -\left\|\nabla_{\m{x}} \tilde{L}_{\alpha}\left(\m{x}^{t,S-1}, \bm{\lambda}^{t}\right)\right\|^2_{\beta \tilde{\m{B}}^{t,S-1}-\frac{\C{L}}{2}(\beta \tilde{\m{B}}^{t,S-1} )^2 }.\notag
	\end{align}
	When $S=1$, by \eqref{L_bound} and \eqref{bound1}, we have 
	\begin{equation}\label{lem3.5 eq5}
		\left\|\nabla_{\m{x}} \tilde{L}_{\alpha}\left(\m{x}^{t+1}, \bm{\lambda}^t\right)\right\|^2 \leq (1+\C{L}\beta_2)\left\|\nabla_{\m{x}} \tilde{L}_{\alpha}\left(\m{x}^{t}, \bm{\lambda}^t\right)\right\|^2.
	\end{equation}
	Substituting the above inequality to \eqref{lem3.5 eq4} and setting $S=1$, we obtain
	\begin{align*}
		\Delta_{\m{x}}^{t+1} \leq  \Delta_{\m{x}}^{t}+2\left\|\nabla g(\bm{\lambda}^t)\right\|_{\gamma \m{P}^t}^2  +\Delta_{{\bm{\lambda}}}^{t}-\Delta_{{\bm{\lambda}}}^{t+1}-\left\|\nabla_{\m{x}} \tilde{L}_{\alpha}\left(\m{x}^{t}, \bm{\lambda}^{t}\right)\right\|^2_{\beta \tilde{\m{B}}^{t}-\frac{\C{L}}{2}(\beta \tilde{\m{B}}^{t} )^2 -\frac{2 \rho \gamma_2}{\mu^2}(1+\C{L}\beta_2)\m{I}}\notag
	\end{align*}
	\end{proof}
	We now show that $\Delta_{\m{x}}^{t,s}$ decays after each iteration in the inner loop of \textbf{Algorithm \ref{alg:Framwork2}}.
	\begin{lemma}\label{lem3.6}
		% Consider the iterates in \textbf{Algorithm \ref{alg:Framwork2}} at $t$-th outer iteration and $(S-1)$-th inner iteration.
		Under \textbf{Assumptions \ref{as1}}, \textbf{\ref{as2}} and \textbf{\ref{as3}}, if $\beta_2\leq1/\C{L}$, we have
		\begin{equation*}
			\Delta_{\m{x}}^{t,S-1} \leq  (1-\beta_1\mu)^{S-1}\Delta_{\m{x}}^{t},
		\end{equation*}
		where $\beta_1$ and $\beta_2$ are defined in \eqref{bound1}. 
	\end{lemma}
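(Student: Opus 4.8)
The plan is to prove the one-step contraction
\[
\Delta_{\m{x}}^{t,s+1}\le(1-\beta_1\mu)\,\Delta_{\m{x}}^{t,s},\qquad s=0,1,\ldots,S-2,
\]
and then chain these $S-1$ inequalities. Since the inner loop is initialized at $\m{x}^{t,0}=\m{x}^t$, we have $\Delta_{\m{x}}^{t,0}=\Delta_{\m{x}}^{t}$, so iterating the contraction yields $\Delta_{\m{x}}^{t,S-1}\le(1-\beta_1\mu)^{S-1}\Delta_{\m{x}}^{t}$, which is exactly the claim (and it is vacuous when $S=1$).

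For the one-step bound, fix $s$ and abbreviate $h(\cdot):=\tilde{L}_\alpha(\cdot,\bm{\lambda}^t)$ and $\m{M}_s:=\beta\tilde{\m{B}}^{t,s}$, so that the inner update \eqref{x-update-2} reads $\m{x}^{t,s+1}=\m{x}^{t,s}-\m{M}_s\nabla h(\m{x}^{t,s})$. By \eqref{L_bound}, $h$ is $\mu$-strongly convex and $\C{L}$-Lipschitz smooth with minimizer $\m{x}^*(\bm{\lambda}^t)$. Applying the descent lemma (the quadratic upper bound from $\C{L}$-smoothness) along the update direction gives
\[
h(\m{x}^{t,s+1})\le h(\m{x}^{t,s})-\nabla h(\m{x}^{t,s})\tr\m{M}_s\nabla h(\m{x}^{t,s})+\tfrac{\C{L}}{2}\|\m{M}_s\nabla h(\m{x}^{t,s})\|^2=h(\m{x}^{t,s})-\|\nabla h(\m{x}^{t,s})\|^2_{\m{M}_s-\frac{\C{L}}{2}\m{M}_s^2},
\]
and subtracting $h(\m{x}^*(\bm{\lambda}^t))$ from both sides converts the left side to $\Delta_{\m{x}}^{t,s+1}$ and the leading term on the right to $\Delta_{\m{x}}^{t,s}$.

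The crucial step is the spectral bound $\m{M}_s-\frac{\C{L}}{2}\m{M}_s^2\succeq\frac{\beta_1}{2}\m{I}$. By \eqref{bound1} and \textbf{Assumption \ref{as3}} we have $\beta_1\m{I}\preceq\m{M}_s\preceq\beta_2\m{I}$, hence $\m{M}_s^2=\m{M}_s^{1/2}\m{M}_s\m{M}_s^{1/2}\preceq\beta_2\m{M}_s$, and therefore $\m{M}_s-\frac{\C{L}}{2}\m{M}_s^2\succeq(1-\tfrac{\C{L}\beta_2}{2})\m{M}_s\succeq\tfrac12\m{M}_s\succeq\tfrac{\beta_1}{2}\m{I}$, where the middle inequality uses the hypothesis $\beta_2\le1/\C{L}$. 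Plugging this into the descent inequality gives $\Delta_{\m{x}}^{t,s+1}\le\Delta_{\m{x}}^{t,s}-\tfrac{\beta_1}{2}\|\nabla h(\m{x}^{t,s})\|^2$. Finally, the PL inequality implied by $\mu$-strong convexity of $h$, namely $\|\nabla h(\m{x}^{t,s})\|^2\ge2\mu\,\Delta_{\m{x}}^{t,s}$, yields $\Delta_{\m{x}}^{t,s+1}\le(1-\beta_1\mu)\Delta_{\m{x}}^{t,s}$; note that $\beta_1\mu\le\beta_2\C{L}\le1$ under the hypothesis, so the contraction factor lies in $[0,1)$.

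This is the standard linear-rate analysis of preconditioned gradient descent on a strongly convex smooth function; the only wrinkle is that the state-dependent preconditioner $\beta\tilde{\m{B}}^{t,s}$ makes the per-step decrease a matrix-weighted quantity, so the main point to get right is the spectral inequality $\m{M}_s-\frac{\C{L}}{2}\m{M}_s^2\succeq\frac{\beta_1}{2}\m{I}$, which is precisely where the uniform eigenvalue bounds of \textbf{Assumption \ref{as3}} and the stepsize restriction $\beta_2\le1/\C{L}$ enter. I do not anticipate a genuine obstacle, only the need to keep these bounds uniform in $t$ and $s$ so that the contraction factor $1-\beta_1\mu$ is iteration-independent.
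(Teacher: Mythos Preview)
Your proposal is correct and follows essentially the same approach as the paper's proof: descent lemma from $\C{L}$-smoothness, the spectral bound $\m{M}_s-\tfrac{\C{L}}{2}\m{M}_s^2\succeq\tfrac{\beta_1}{2}\m{I}$ using $\beta_2\le 1/\C{L}$, and the PL inequality from $\mu$-strong convexity to obtain the one-step contraction, then iteration. The only cosmetic difference is that you state the general $s\to s+1$ step, whereas the paper writes the $S{-}2\to S{-}1$ step and then iterates backwards.
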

	
	\begin{proof}
		Using the $\C{L}$-Lipschitz continuity of $\tilde{L}_{\alpha}(\cdot,\bm{\lambda}^{t})$ and \eqref{x-update-2}, we have
	\begin{align}\label{lem3.6 eq1}
		\Delta_{\m{x}}^{t,S-1}
		=&\tilde{L}_{\alpha}\left(\m{x}^{t,S-1}, \bm{\lambda}^{t}\right)-\tilde{L}_{\alpha}\left(\m{x}^*(\bm{\lambda}^{t}), {\bm{\lambda}}^{t}\right)\\
		\leq & \Delta_{\m{x}}^{t,S-2}+\nabla_{\m{x}} \tilde{L}_{\alpha}\left(\m{x}^{t,S-2}, \bm{\lambda}^{t}\right)\tr\left(\m{x}^{t,S-1}-\m{x}^{t,S-2}\right)  +\frac{\C{L}}{2}\left\|\m{x}^{t,S-1}-\m{x}^{t,S-2}\right\|^2 \notag\\
		=&\Delta_{\m{x}}^{t,S-2} -\left\|\nabla_{\m{x}} \tilde{L}_{\alpha}\left(\m{x}^{t,S-2}, \bm{\lambda}^{t}\right)\right\|^2_{\beta \tilde{\m{B}}^{t,S-2}-\frac{\C{L}}{2}(\beta \tilde{\m{B}}^{t,S-2} )^2 }.\notag
	\end{align}
	By the definitions of $\beta_1$ and $\beta_2$ defined in \eqref{bound1}, if $\beta_2\leq1/\C{L}$, we have 
	\begin{align}\label{lem3.6 eq2}
		\beta \tilde{\m{B}}^{t,S-2}-\frac{\C{L}}{2}(\beta \tilde{\m{B}}^{t,S-2} )^2
		=\beta \tilde{\m{B}}^{t,S-2}\left(\m{I}-\frac{\C{L}}{2}\beta \tilde{\m{B}}^{t,S-2} \right)
		\succeq  \frac{1}{2}\beta \tilde{\m{B}}^{t,S-2}\succeq  \frac{\beta_1}{2}\m{I}. 
	\end{align}
	By the $\mu$-strong convexity of $\tilde{L}_{\alpha}(\cdot,\bm{\lambda}^{t})$, we have
	\begin{align}\label{lem3.6 eq3}
		\left\|\nabla_{\m{x}} \tilde{L}_{\alpha}\left(\m{x}^{t,S-2}, \bm{\lambda}^{t}\right)\right\|^2
		\geq  2\mu\left(\tilde{L}_{\alpha}\left(\m{x}^{t,S-2}, {\bm{\lambda}}^{t}\right)-\tilde{L}_{\alpha}\left(\m{x}^*(\bm{\lambda}^{t}), {\bm{\lambda}}^{t}\right)\right)
		= 2\mu\Delta_{\m{x}}^{t,S-2}.
	\end{align}
	Substituting \eqref{lem3.6 eq2} and \eqref{lem3.6 eq3} into \eqref{lem3.6 eq1}, we obtain
	\begin{align*}
		\Delta_{\m{x}}^{t,S-1}\leq (1-\beta_1\mu)\Delta_{\m{x}}^{t,S-2}
		\leq(1-\beta_1\mu)^{S-1}\Delta_{\m{x}}^{t}.
	\end{align*}
	\end{proof}
	Combining \textbf{Lemma \ref{lem3.4}}, \textbf{Lemma \ref{lem3.5}} and \textbf{Lemma \ref{lem3.6}}, we establish the linear decay rate of $\Delta^t$.
	\begin{theorem}\label{the3.7}
		%	Consider the potential function $\Delta^t$ defined in \eqref{merit}. 
		Under \textbf{Assumptions \ref{as1}}, \textbf{\ref{as2}} and \textbf{\ref{as3}}, 
		if the primal and dual step sizes, namely $\beta$ and $\gamma$, satisfy
		$\beta \leq \frac{1}{\Psi{\C{L}}}$ and 
		\begin{equation*}
			\gamma < \min \left\{\frac{1}{6\bar{\alpha}L_g},\left(\frac{1}{(1-\beta\psi\mu)^{S}}-1\right)\frac{\mu^2}{11\bar{\alpha} \C{L}\rho}\right\},
		\end{equation*}
		there exists a constant $\kappa \in (0,1)$ such that 
		\begin{equation*}
			\Delta^{t+1} \leq \kappa\Delta^t,
		\end{equation*}
		where
		\begin{align}\label{def-kappa}
			\kappa=\max\Bigg\{1-\frac{2}{7}\left(1-6L_g\gamma\bar{\alpha}\right) \mu_g\gamma \alpha,
			\left(1+\frac{(12L_g\gamma \bar{\alpha}+10)\C{L}\rho \gamma \bar{\alpha}}{\mu^2}\right)(1-\beta\psi\mu)^{S}\Bigg\}.
		\end{align}
		Especially, when $S=1$, if $\beta\leq \frac{1}{2\Psi{\C{L}}}$ and $	\gamma \leq \min\left\{\frac{1}{12\bar{\alpha}L_g},\frac{4\mu^2}{99\bar{\alpha}\C{L}\rho }\right\}$, 
		there exists a constant $q \in (0,1)$ such that 
		\begin{equation*}
			\Delta^{t+1} \leq (1-q)\Delta^t,
		\end{equation*}
		where $q= \min \left\{\frac{\gamma_1\mu_g}{7},\frac{\beta\psi\mu}{2}\right\}$.
	\end{theorem}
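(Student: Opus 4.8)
The plan is to combine the three recursive bounds just established --- Lemma~\ref{lem3.4} for the dual gap $\Delta_{\bm{\lambda}}^t$, Lemma~\ref{lem3.5} for the primal gap $\Delta_{\m{x}}^{t+1}$ in terms of $\Delta_{\m{x}}^{t,S-1}$, and Lemma~\ref{lem3.6} to propagate the inner-loop decay back to $\Delta_{\m{x}}^t$ --- into a single inequality for the potential $\Delta^t = 7\Delta_{\bm{\lambda}}^t+\Delta_{\m{x}}^t$. First I would treat the case $S>1$. Forming $7\cdot\eqref{lem3.4 eq1} + \eqref{lem3.5 eq4}$ and then applying Lemma~\ref{lem3.6} to replace $\Delta_{\m{x}}^{t,S-1}$ by $(1-\beta_1\mu)^{S-1}\Delta_{\m{x}}^t$, I expect the $\Delta_{\bm{\lambda}}^{t+1}$ terms from the two lemmas to combine (the $+(\Delta_{\bm{\lambda}}^t-\Delta_{\bm{\lambda}}^{t+1})$ from Lemma~\ref{lem3.5} gets absorbed into the $7\Delta_{\bm{\lambda}}^{t+1}$ on the left, leaving a net coefficient that is still enough to dominate). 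The key bookkeeping is that the two ``bad'' terms on the right --- a multiple of $\|\nabla g(\bm{\lambda}^t)\|_{\gamma\m{P}^t}^2$ and a multiple of $\|\nabla_{\m{x}}\tilde L_\alpha(\m{x}^{t+1},\bm{\lambda}^t)\|^2$ --- must each be killed.

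For the gradient-of-$g$ term I would use the negative coefficient $-7(\tfrac12-L_g\gamma_2)$ coming from Lemma~\ref{lem3.4}; under $\gamma<1/(6\bar\alpha L_g)$ we have $L_g\gamma_2 = L_g\gamma\bar\alpha<1/6$, so $\tfrac12-L_g\gamma_2>1/3$ and after subtracting the $+2\|\nabla g\|_{\gamma\m{P}^t}^2$ from Lemma~\ref{lem3.5} there is still a strictly negative coefficient on $\|\nabla g(\bm{\lambda}^t)\|_{\gamma\m{P}^t}^2$; then $\|\nabla g(\bm{\lambda}^t)\|_{\gamma\m{P}^t}^2 \ge \gamma_1\|\nabla g(\bm{\lambda}^t)\|^2 \ge 2\gamma_1\mu_g\Delta_{\bm{\lambda}}^t$ by the PL inequality of Lemma~\ref{lem3.1}, which converts it into a $-c\,\Delta_{\bm{\lambda}}^t$ contraction term. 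For the $\|\nabla_{\m{x}}\tilde L_\alpha(\m{x}^{t+1},\bm{\lambda}^t)\|^2$ term, the condition $\beta\le 1/(\Psi\C{L})$ means $\beta_2\le 1/\C{L}$, so the matrix weight $\beta\tilde{\m{B}}^{t,S-1}-\tfrac{\C{L}}{2}(\beta\tilde{\m{B}}^{t,S-1})^2 \succeq \tfrac12\beta\tilde{\m{B}}^{t,S-1}\succeq\tfrac{\beta_1}{2}\m{I}$ as in \eqref{lem3.6 eq2}; I then bound $\|\nabla_{\m{x}}\tilde L_\alpha(\m{x}^{t+1},\bm{\lambda}^t)\|^2$ above by $(1+\C{L}\beta_2)\|\nabla_{\m{x}}\tilde L_\alpha(\m{x}^{t,S-1},\bm{\lambda}^t)\|^2$ (one Lipschitz-gradient step, cf.\ \eqref{lem3.5 eq5}) and use $\|\nabla_{\m{x}}\tilde L_\alpha(\m{x}^{t,S-1},\bm{\lambda}^t)\|^2\ge 2\mu\Delta_{\m{x}}^{t,S-1}$, so that the surviving negative $\|\cdot\|^2_{\beta\tilde{\m{B}}^{t,S-1}-\ldots}$ term dominates the positive $\tfrac{2\rho\gamma_2}{\mu^2}(\cdot)$ term provided $\gamma$ is small enough --- this is exactly where the second term in the $\min$ defining the $\gamma$-bound enters, calibrated so that $(1+\tfrac{(12L_g\gamma\bar\alpha+10)\C{L}\rho\gamma\bar\alpha}{\mu^2})(1-\beta\psi\mu)^S<1$.

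Assembling the pieces, I would arrive at $\Delta^{t+1}\le \kappa_1\,\Delta_{\bm{\lambda}}^t + \kappa_2\,\Delta_{\m{x}}^t$ with $\kappa_1 = 1-\tfrac{2}{7}(1-6L_g\gamma\bar\alpha)\mu_g\gamma\alpha$ (from the $7\Delta_{\bm{\lambda}}$ part after the PL argument) and $\kappa_2 = (1+\tfrac{(12L_g\gamma\bar\alpha+10)\C{L}\rho\gamma\bar\alpha}{\mu^2})(1-\beta\psi\mu)^S$ (from the $\Delta_{\m{x}}$ part after Lemma~\ref{lem3.6} with $(1-\beta_1\mu)^{S-1}$ absorbing the extra factor); taking $\kappa=\max\{\kappa_1,\kappa_2\}$ and checking both lie in $(0,1)$ under the stated step-size restrictions gives $\Delta^{t+1}\le\kappa\Delta^t$. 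Finally, for the $S=1$ refinement I would redo the same computation using part~(1) of Lemma~\ref{lem3.5} directly (no inner loop, so the matrix weight is $\beta\tilde{\m{B}}^t-\tfrac{\C{L}}{2}(\beta\tilde{\m{B}}^t)^2-\tfrac{2\rho\gamma_2}{\mu^2}(1+\C{L}\beta_2)\m{I}$); with $\beta\le 1/(2\Psi\C{L})$ this weight is still $\succeq\tfrac{\beta_1}{4}\m{I}$ or so after the subtraction, and the sharper $\gamma$-bound makes the arithmetic collapse to the clean rate $1-q$ with $q=\min\{\gamma_1\mu_g/7,\beta\psi\mu/2\}$.

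\textbf{Main obstacle.} The delicate point is the constant chase: verifying that after forming $7\cdot$(dual bound)$+$(primal bound) and substituting Lemma~\ref{lem3.6}, the coefficient $7$ is large enough that the leftover negative $\|\nabla g\|_{\gamma\m{P}^t}^2$ coefficient stays bounded away from zero while simultaneously the $(1-\beta_1\mu)^{S-1}$ decay, inflated by $(1+\C{L}\beta_2)$ and the $\tfrac{2\rho\gamma_2}{\mu^2}$ cross-terms, still produces $\kappa_2<1$. This forces the precise form of the $\gamma$ upper bound (in particular the factor $\bigl(\tfrac{1}{(1-\beta\psi\mu)^S}-1\bigr)\tfrac{\mu^2}{11\bar\alpha\C{L}\rho}$), and one must be careful that the same $\gamma$ simultaneously satisfies $L_g\gamma\bar\alpha<1/6$; everything else is bounding quadratic forms by their eigenvalue extremes and invoking the PL inequality.
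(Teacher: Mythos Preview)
Your overall strategy---combine Lemmas~\ref{lem3.4}, \ref{lem3.5}, \ref{lem3.6}, then apply the PL inequality from Lemma~\ref{lem3.1}---is exactly the paper's, and your treatment of the $S=1$ case matches the paper almost verbatim. Two points deserve correction or comparison.

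\textbf{A bookkeeping slip.} You propose forming $7\cdot\eqref{lem3.4 eq1}+\eqref{lem3.5 eq4}$, but Lemma~\ref{lem3.5} already contributes a $+(\Delta_{\bm{\lambda}}^t-\Delta_{\bm{\lambda}}^{t+1})$; to land on the potential $7\Delta_{\bm{\lambda}}^{t+1}+\Delta_{\m{x}}^{t+1}$ you must multiply \eqref{lem3.4 eq1} by $6$, not $7$. The paper does exactly this: ``Adding \eqref{the3.7 eq5} multiplied by $6$ with \eqref{the3.7 eq4}\ldots''. This also explains why the coefficient on $\|\nabla g(\bm{\lambda}^t)\|_{\gamma\m{P}^t}^2$ becomes $-(1-6L_g\gamma_2)$ rather than something involving $7$.

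\textbf{The handling of $\|\nabla_{\m{x}}\tilde L_\alpha(\m{x}^{t+1},\bm{\lambda}^t)\|^2$ for $S>1$.} Here your plan diverges from the paper's, and your own sketch is internally inconsistent. You want to (i) bound this term by $(1+\C{L}\beta_2)\|\nabla_{\m{x}}\tilde L_\alpha(\m{x}^{t,S-1},\bm{\lambda}^t)\|^2$ and cancel it against the negative weighted norm $\|\cdot\|^2_{\beta\tilde{\m{B}}^{t,S-1}-\tfrac{\C{L}}{2}(\beta\tilde{\m{B}}^{t,S-1})^2}$, and simultaneously (ii) upgrade the $(1-\beta_1\mu)^{S-1}$ from Lemma~\ref{lem3.6} to $(1-\beta_1\mu)^S$. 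But the negative weighted norm is precisely what provides that extra factor of $(1-\beta_1\mu)$---it cannot do both jobs and still deliver the stated $\kappa_2$. The paper avoids this tension entirely: it never relates $\|\nabla_{\m{x}}\tilde L_\alpha(\m{x}^{t+1},\bm{\lambda}^t)\|^2$ back to the $(t,S-1)$ gradient. Instead, using $\C{L}$-smoothness and one more inner-loop contraction step, it bounds
\[
\|\nabla_{\m{x}}\tilde L_\alpha(\m{x}^{t+1},\bm{\lambda}^t)\|^2 \le 2\C{L}\,\Delta_{\m{x}}^{t,S}\le 2\C{L}(1-\beta_1\mu)^S\Delta_{\m{x}}^t
\]
directly (this is \eqref{the3.7 eq3}), and likewise absorbs the negative weighted norm into $\Delta_{\m{x}}^{t+1}\le(1-\beta_1\mu)^S\Delta_{\m{x}}^t+\cdots$ (this is \eqref{the3.7 eq2}). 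Substituting \eqref{the3.7 eq3} into both the primal bound and Lemma~\ref{lem3.4} gives the $(1+\tfrac{(12L_g\gamma_2+10)\C{L}\rho\gamma_2}{\mu^2})(1-\beta_1\mu)^S$ coefficient on $\Delta_{\m{x}}^t$ with no leftover gradient terms to cancel. Your route could be repaired to give \emph{a} linear rate, but not the specific $\kappa$ in \eqref{def-kappa}; the paper's route is both cleaner and yields the stated constants.
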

	\begin{proof}
			Using the derivation similar to \textbf{Lemma \ref{lem3.6}}, if $\beta_2\leq1/\C{L}$, we have
		\begin{align}\label{the3.7 eq2}
			\Delta_{\m{x}}^{t+1} \leq  (1-\beta_1\mu)^{S}\Delta_{\m{x}}^{t}+\frac{2 \rho \gamma_2}{\mu^2}\left\|\nabla_{\m{x}} \tilde{L}_{\alpha}\left(\m{x}^{t+1}, \bm{\lambda}^t\right)\right\|^2
			+2\left\|\nabla g(\bm{\lambda}^t)\right\|_{\gamma \m{P}^t}^2 +\Delta_{{\bm{\lambda}}}^{t}-\Delta_{{\bm{\lambda}}}^{t+1},
		\end{align}
		and
		\begin{align}\label{the3.7 eq3}
			\left\|\nabla_{\m{x}} \tilde{L}_{\alpha}\left(\m{x}^{t+1}, \bm{\lambda}^t\right)\right\|^2
			\leq2\C{L} \left(\tilde{L}_{\alpha}\left(\m{x}^{t+1}, \bm{\lambda}^{t}\right)-\tilde{L}_{\alpha}\left(\m{x}^*(\bm{\lambda}^{t}), \bm{\lambda}^{t}\right)\right)
			\leq 2\C{L}(1-\beta_1\mu)^{S}\Delta_{\m{x}}^{t},
		\end{align}
		where the first inequality is by the $\C{L}$-Lipschitz continuity of $\tilde{L}_{\alpha}(\cdot,\bm{\lambda}^{t})$.
		Substituting \eqref{the3.7 eq3} into \eqref{the3.7 eq2} and \eqref{lem3.4 eq1}, respectively, we obtain
		\begin{align}\label{the3.7 eq4}
			\Delta_{\m{x}}^{t+1} \leq  (1-\beta_1\mu)^{S}\Delta_{\m{x}}^{t}+\frac{4\C{L} \rho \gamma_2(1-\beta_1\mu)^{S}}{\mu^2}\Delta_{\m{x}}^{t}
			+2\left\|\nabla g(\bm{\lambda}^t)\right\|_{\gamma \m{P}^t}^2+\Delta_{{\bm{\lambda}}}^{t}-\Delta_{{\bm{\lambda}}}^{t+1},
		\end{align}
		\begin{align}\label{the3.7 eq5}
			\Delta_{\bm{\lambda}}^{t+1} \leq  \Delta_{\bm{\lambda}}^{t}-\left(\frac{1}{2}-L_g\gamma_2\right)\left\|\nabla g(\bm{\lambda}^{t})\right\|_{\gamma \m{P}^t}^2 
			+\frac{2L_g\rho (\gamma_2)^2\C{L}+ \rho \gamma_2\C{L}}{\mu^2}(1-\beta_1\mu)^{S}\Delta_{\m{x}}^{t},
		\end{align}
		Adding \eqref{the3.7 eq5} multiplied by $6$ with \eqref{the3.7 eq4}, we have
		\begin{align*}
			7\Delta_{\bm{\lambda}}^{t+1}+\Delta_{\m{x}}^{t+1}
			\leq  7\Delta_{\bm{\lambda}}^{t}-\left(1-6L_g\gamma_2\right)\left\|\nabla g(\bm{\lambda}^{t})\right\|_{\gamma \m{P}^t}^2
			+\left(1+\frac{(12L_g\gamma_2+10)\C{L}\rho \gamma_2}{\mu^2}\right)(1-\beta_1\mu)^{S}\Delta_{\m{x}}^{t}
		\end{align*}
		It follows from \textbf{Lemma \ref{lem3.1}} that 
		\begin{equation*}
			\|\nabla g({\bm{\lambda}}^t)\|^2 \geq 2 \mu_g\left(g\left({\bm{\lambda}}^*\right)-g({\bm{\lambda}^t})\right)=2 \mu_g \Delta_{{\bm{\lambda}}}^t.
		\end{equation*}
		Then, we have from \eqref{bound2} that
		\begin{align*}
			&7\Delta_{\bm{\lambda}}^{t+1}+\Delta_{\m{x}}^{t+1}\\
			\leq &7\Delta_{\bm{\lambda}}^{t}-2\left(1-6L_g\gamma_2\right) \mu_g\gamma_1\Delta_{\bm{\lambda}}^{t}+\left(1+\frac{(12L_g\gamma_2+10)\C{L}\rho \gamma_2}{\mu^2}\right)(1-\beta_1\mu)^{S}\Delta_{\m{x}}^{t}\\
			=&\left(1-\frac{2}{7}\left(1-6L_g\gamma_2\right) \mu_g\gamma_1\right)7\Delta_{\bm{\lambda}}^{t}+\left(1+\frac{(12L_g\gamma_2+10)\C{L}\rho \gamma_2}{\mu^2}\right)(1-\beta_1\mu)^{S}\Delta_{\m{x}}^{t}\\
			\leq&\kappa\left(7\Delta_{\bm{\lambda}}^{t}+\Delta_{\m{x}}^{t}\right),
		\end{align*}
		where 
		\begin{align*}
			\kappa=\max\Bigg\{1-\frac{2}{7}\left(1-6L_g\gamma_2\right) \mu_g\gamma_1,
			\left(1+\frac{(12L_g\gamma_2+10)\C{L}\rho \gamma_2}{\mu^2}\right)(1-\beta_1\mu)^{S}\Bigg\}.
		\end{align*}
		The condition that
		\begin{equation*}
			\gamma_2 < \min \left\{\frac{1}{6L_g},\left(\frac{1}{(1-\beta_1\mu)^{S}}-1\right)\frac{\mu^2}{11 \C{L}\rho}\right\}
		\end{equation*}
		ensures $\kappa \in (0,1)$, which implies
		$\Delta^t = 7\Delta_{\bm{\lambda}}^{t}+\Delta_{\m{x}}^{t}$ decays at a linear rate.
		
		For $S=1$, we use a similar proof framework but with different details. By \eqref{lem3.4 eq1} and \eqref{lem3.5 eq5}, we have
		\begin{align}\label{the3.7 eq6}
			\Delta_{\bm{\lambda}}^{t+1} \leq  \Delta_{\bm{\lambda}}^{t}-\left(\frac{1}{2}-L_g\gamma_2\right)\left\|\nabla g(\bm{\lambda}^{t})\right\|_{\gamma \m{P}^t}^2 
			+\frac{2L_g\rho (\gamma_2)^2+ \rho \gamma_2}{2\mu^2}(1+\C{L}\beta_2)^2\left\|\nabla_{\m{x}} \tilde{L}_{\alpha}\left(\m{x}^{t}, \bm{\lambda}^t\right)\right\|^2,
		\end{align}
		Adding \eqref{the3.7 eq6} multiplied by $6$ with \eqref{lem3.5 eq0} and denoting
		$$
		\hat{\m{B}}^t=\beta \tilde{\m{B}}^{t}-\frac{\C{L}}{2}(\beta \tilde{\m{B}}^{t} )^2 -\frac{12L_g\rho (\gamma_2)^2+ 10\rho \gamma_2}{2\mu^2}(1+\C{L}\beta_2)^2\m{I},
		$$ 
		we have
		\begin{align}\label{the3.7 eq7}
			&7\Delta_{\bm{\lambda}}^{t+1}+\Delta_{\m{x}}^{t+1}\\
			\leq & 7\Delta_{\bm{\lambda}}^{t}+\Delta_{\m{x}}^{t}-\left(1-6L_g\gamma_2\right)\left\|\nabla g(\bm{\lambda}^{t})\right\|_{\gamma \m{P}^t}^2-\nabla_{\m{x}} \tilde{L}_{\alpha}\left(\m{x}^{t}, \bm{\lambda}^{t}\right) \tr \hat{\m{B}}^t\nabla_{\m{x}} \tilde{L}_{\alpha}\left(\m{x}^{t}, \bm{\lambda}^{t}\right)\notag\\
			\leq &7\Delta_{\bm{\lambda}}^{t}+\Delta_{\m{x}}^{t}-\left(1-6L_g\gamma_2\right)\left\|\nabla g(\bm{\lambda}^{t})\right\|_{\gamma \m{P}^t}^2-\nabla_{\m{x}} \tilde{L}_{\alpha}\left(\m{x}^{t}, \bm{\lambda}^{t}\right) \tr \left(\frac{1}{2}\beta \tilde{\m{B}}^{t}-\frac{\C{L}}{2}(\beta \tilde{\m{B}}^{t} )^2 \right)\nabla_{\m{x}} \tilde{L}_{\alpha}\left(\m{x}^{t}, \bm{\lambda}^{t}\right)\notag
		\end{align}
		where the second inequality is due to $\frac{1}{2}\beta \tilde{\m{B}}^{t} -\frac{(12L_g\gamma_2+10)\rho \gamma_2}{2\mu^2}(1+\C{L}\beta_2)^2\m{I} \succeq \m{0}$ which is from 
		$$
		\gamma_2=\bar{\alpha}\gamma \leq \min\left\{\frac{1}{12L_g},\frac{4\mu^2\beta_1}{99\rho }\right\} \text{ and } \beta_2 = \Psi \beta \leq \frac{1}{2\C{L}}.
		$$
		Further from the relation $\beta_2 \leq \frac{1}{2\C{L}}$, we obtain
		\begin{align*}
			\frac{1}{2}\beta \tilde{\m{B}}^{t}-\frac{\C{L}}{2}(\beta \tilde{\m{B}}^{t} )^2 \succeq \frac{\lambda_{\min}(\beta\tilde{\m{B}}^t)}{4}\m{I}=\frac{\beta_1}{4}\m{I}.
		\end{align*}
		Then using the inequality $\left\|\nabla_{\m{x}} \tilde{L}_{\alpha}\left(\m{x}^{t}, \bm{\lambda}^{t}\right)\right\|^2 \geq 2 \mu \Delta_{\m{x}}^{t}$ from the $\mu$-strong convexity of $\tilde{L}_{\alpha}(\cdot,\bm{\lambda}^{t})$, we have
		\begin{align}\label{the3.7 eq8}
			\left\|\nabla_{\m{x}} \tilde{L}_{\alpha}\left(\m{x}^{t}, \bm{\lambda}^{t}\right)\right\|^2_{\frac{1}{2}\beta \tilde{\m{B}}^{t}-\frac{\C{L}}{2}(\beta \tilde{\m{B}}^{t} )^2} \geq\frac{\beta_1\mu}{2}\Delta_{\m{x}}^{t}.
		\end{align}
		It follows from \textbf{Lemma \ref{lem3.1}} that 
		\begin{equation*}
			\|\nabla g({\bm{\lambda}}^t)\|^2 \geq 2 \mu_g\left(g\left({\bm{\lambda}}^*\right)-g({\bm{\lambda}^t})\right)=2 \mu_g \Delta_{{\bm{\lambda}}}^t.
		\end{equation*}
		Then we have by $\gamma_2 \leq \frac{1}{12L_g}$ that
		\begin{equation}\label{the3.7 eq9}
			\left(1-6L_g\gamma_2\right)\left\|\nabla g(\bm{\lambda}^{t})\right\|_{\gamma \m{P}^t}^2 \geq \gamma_1\mu_g \Delta_{{\bm{\lambda}}}^t.
		\end{equation}
		By substituting \eqref{the3.7 eq8} and \eqref{the3.7 eq9} into \eqref{the3.7 eq7}, we obtain
		\begin{align*}
			7\Delta_{\bm{\lambda}}^{t+1}+\Delta_{\m{x}}^{t+1}
			\leq &7\Delta_{\bm{\lambda}}^{t}-\gamma_1\mu_g \Delta_{{\bm{\lambda}}}^t+\Delta_{\m{x}}^{t}-\frac{\beta_1\mu}{2}\Delta_{\m{x}}^{t}\\
			= &7\left(1-\frac{\gamma_1\mu_g}{7}\right)\Delta_{{\bm{\lambda}}}^t+\left(1-\frac{\beta_1\mu}{2}\right)\Delta_{\m{x}}^{t}\\
			\leq & (1-q)\left(	7\Delta_{\bm{\lambda}}^{t}+\Delta_{\m{x}}^{t}\right)
		\end{align*}
		where $q= \min \left\{\frac{\gamma_1\mu_g}{7},\frac{\beta_1\mu}{2}\right\}$. Using the definitions of $\beta_1$, $\beta_2$ defined in \eqref{bound1} and   $\gamma_1$, $\gamma_2$ defined in \eqref{bound2} completes the proof.
	\end{proof}
	By \textbf{Theorem~\ref{the3.7}}, we can directly establish that  $\m{x}^t$ converges to the 
	unique minimizer $\m{x}^*=\m{x}^*(\bm{\lambda}^*)$ of the original problem \eqref{obj_fun4} at a linear rate.
	\begin{corollary}\label{cor3.8}
		Under \textbf{Assumptions \ref{as1}}, \textbf{\ref{as2}} and \textbf{\ref{as3}}, 
		if the primal and dual step sizes, namely $\beta$ and $\gamma$, satisfy
		$\beta \leq  \frac{1}{\Psi{\C{L}}}$ and 
		\begin{equation*}
			\gamma < \min \left\{\frac{1}{6\bar{\alpha}L_g},\left(\frac{1}{(1-\beta\psi\mu)^{S}}-1\right)\frac{\mu^2}{11\bar{\alpha} \C{L}\rho}\right\}, 
		\end{equation*}
		the iterates $\{\m{x}^t \}$
		generated by \textbf{Algorithm \ref{alg:Framwork2}} converge to $\m{x}^*(\bm{\lambda}^*)$
		linearly, more specifically, there exists a constant $\kappa \in (0,1)$ such that
		\begin{equation*}
			\left\|\m{x}^t-\m{x}^*(\bm{\lambda}^*) \right\|^2\leq c\kappa^t,
		\end{equation*}
		where $c={4\Delta^0}/\left({\mu\min\left\{\frac{7\mu}{\C{L}},1\right\}}\right)$ and
		$0< \kappa < 1$ is defined in \eqref{def-kappa}. Especially, when $S=1$, if $\beta\leq \frac{1}{2\Psi{\C{L}}}$ and $	\gamma \leq \min\left\{\frac{1}{12\bar{\alpha}L_g},\frac{4\mu^2}{99\bar{\alpha}\C{L}\rho }\right\}$, we have $\left\|\m{x}^t-\m{x}^*(\bm{\lambda}^*) \right\|^2\leq c(1-q)^t$, where $q= \min \left\{\frac{\gamma_1\mu_g}{7},\frac{\beta \psi\mu}{2}\right\}$.
	\end{corollary}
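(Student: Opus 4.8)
The plan is to derive the corollary from the linear decay of the potential function $\Delta^t = 7\Delta_{\bm{\lambda}}^t+\Delta_{\m{x}}^t$ established in \textbf{Theorem~\ref{the3.7}} by inserting a two-sided ``error bound'' that compares $\|\m{x}^t-\m{x}^*(\bm{\lambda}^*)\|^2$ with $\Delta_{\m{x}}^t$ and $\Delta_{\bm{\lambda}}^t$. Iterating \textbf{Theorem~\ref{the3.7}} gives $\Delta^t\le\kappa^t\Delta^0$ (and $\Delta^t\le(1-q)^t\Delta^0$ when $S=1$), so the whole task reduces to showing $\|\m{x}^t-\m{x}^*(\bm{\lambda}^*)\|^2\le \tfrac{4}{\mu\min\{7\mu/\C{L},1\}}\,\Delta^t$.

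First I would use the triangle inequality to split
\[
\|\m{x}^t-\m{x}^*(\bm{\lambda}^*)\|^2\le 2\|\m{x}^t-\m{x}^*(\bm{\lambda}^t)\|^2+2\|\m{x}^*(\bm{\lambda}^t)-\m{x}^*(\bm{\lambda}^*)\|^2 .
\]
The first term is controlled directly by $\Delta_{\m{x}}^t$: since $\m{x}^*(\bm{\lambda}^t)$ is the minimizer of the $\mu$-strongly convex function $\tilde{L}_{\alpha}(\cdot,\bm{\lambda}^t)$, strong convexity gives $\Delta_{\m{x}}^t=\tilde{L}_{\alpha}(\m{x}^t,\bm{\lambda}^t)-\tilde{L}_{\alpha}(\m{x}^*(\bm{\lambda}^t),\bm{\lambda}^t)\ge\tfrac{\mu}{2}\|\m{x}^t-\m{x}^*(\bm{\lambda}^t)\|^2$, hence $\|\m{x}^t-\m{x}^*(\bm{\lambda}^t)\|^2\le\tfrac{2}{\mu}\Delta_{\m{x}}^t$.

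For the second term I would route everything through the variable $\m{v}=\m{Z}^{1/2}\bm{\lambda}$ and the conjugate representation \eqref{conjugate-f}, which identifies $\m{x}^*(\bm{\lambda})=\nabla\tilde{f}^*(-\m{Z}^{1/2}\bm{\lambda})$. Because $\tilde{f}=f+(\alpha/2)\|\cdot\|_{\m{Z}}$ is $\mu$-strongly convex, $\tilde{f}^*$ is $1/\mu$-Lipschitz smooth, which gives $\|\m{x}^*(\bm{\lambda}^t)-\m{x}^*(\bm{\lambda}^*)\|\le\tfrac{1}{\mu}\|\m{v}^t-\m{v}^*\|$, where $\m{v}^*=\m{Z}^{1/2}\bm{\lambda}^*$. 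Conversely, $\tilde{f}^*$ is $1/\C{L}$-strongly convex; evaluating the strong-convexity inequality between $-\m{v}^t$ and its minimizer $-\m{v}^*$ over the subspace $\C{S}=\mbox{col}(\m{Z}^{1/2})$, and noticing that the linear term vanishes — indeed $\nabla\tilde{f}^*(-\m{v}^*)=\m{x}^*(\bm{\lambda}^*)=\m{x}^*\in\mbox{Null}(\m{Z}^{1/2})$ by \eqref{grad_g} and dual optimality, while $\m{v}^t-\m{v}^*\in\C{S}$ where all dual iterates live — yields $\Delta_{\bm{\lambda}}^t=\tilde{f}^*(-\m{v}^t)-\tilde{f}^*(-\m{v}^*)\ge\tfrac{1}{2\C{L}}\|\m{v}^t-\m{v}^*\|^2$. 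Combining the last two estimates gives $\|\m{x}^*(\bm{\lambda}^t)-\m{x}^*(\bm{\lambda}^*)\|^2\le\tfrac{2\C{L}}{\mu^2}\Delta_{\bm{\lambda}}^t$.

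Assembling everything yields $\|\m{x}^t-\m{x}^*(\bm{\lambda}^*)\|^2\le\tfrac{4}{\mu}\Delta_{\m{x}}^t+\tfrac{4\C{L}}{\mu^2}\Delta_{\bm{\lambda}}^t=\tfrac{4}{\mu}\big(\Delta_{\m{x}}^t+\tfrac{\C{L}}{\mu}\Delta_{\bm{\lambda}}^t\big)$, and a coefficient comparison against $\Delta^t=7\Delta_{\bm{\lambda}}^t+\Delta_{\m{x}}^t$ shows $\Delta_{\m{x}}^t+\tfrac{\C{L}}{\mu}\Delta_{\bm{\lambda}}^t\le\tfrac{1}{\min\{1,\,7\mu/\C{L}\}}\Delta^t$; combined with $\Delta^t\le\kappa^t\Delta^0$ this gives exactly $\|\m{x}^t-\m{x}^*(\bm{\lambda}^*)\|^2\le c\kappa^t$ with $c=4\Delta^0/(\mu\min\{7\mu/\C{L},1\})$, and the $S=1$ statement follows verbatim with $\kappa$ replaced by $1-q$. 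The one step that needs care is the lower bound $\Delta_{\bm{\lambda}}^t\ge\tfrac{1}{2\C{L}}\|\m{v}^t-\m{v}^*\|^2$: one must ensure that the strong convexity of $\tilde{f}^*$ is invoked only along the subspace $\C{S}$ on which the sequence $\{\bm{\lambda}^t\}$ (hence $\{\m{v}^t\}$) stays, and verify rigorously that the cross term $\langle\m{x}^*,\m{v}^t-\m{v}^*\rangle$ vanishes; the remaining manipulations are routine applications of strong convexity, smoothness, and Cauchy-Schwarz.
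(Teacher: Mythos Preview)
Your proposal is correct and follows essentially the same route as the paper: both bound $\|\m{x}^t-\m{x}^*(\bm{\lambda}^t)\|^2$ via $\mu$-strong convexity of $\tilde{L}_\alpha(\cdot,\bm{\lambda}^t)$, bound $\|\m{x}^*(\bm{\lambda}^t)-\m{x}^*(\bm{\lambda}^*)\|^2$ by first controlling $\|\m{v}^t-\m{v}^*\|^2$ through the $1/\C{L}$-strong convexity of $\tilde f^*$ and then passing to $\m{x}^*$ via a $1/\mu$-Lipschitz bound, and finally combine the two pieces against $\Delta^t$. The only cosmetic differences are that the paper obtains the Lipschitz bound on $\m{x}^*(\cdot)$ from the mean-value identity \eqref{bound-xxxx} rather than from smoothness of $\tilde f^*$, and it assembles the estimate via $\|a\|^2+\|b\|^2\ge\tfrac12\|a+b\|^2$ instead of starting from the triangle inequality; your explicit justification of why the cross term $\langle \m{x}^*,\m{v}^t-\m{v}^*\rangle$ vanishes is in fact a bit more careful than the paper's wording.
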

	\begin{proof}
	
	By \eqref{bound-xxxx}, for any $\bm{\lambda}_1, \bm{\lambda}_2 \in R^{np}$, we have
	\begin{equation}\label{cor3.8 eq1}
		\left\|\m{x}^*(\bm{\lambda}_1)-\m{x}^*(\bm{\lambda}_2) \right\| \leq \frac{1}{\mu}\left\|\m{Z}^{1/2}\bm{\lambda}_1-\m{Z}^{1/2}\bm{\lambda}_2 \right\|.
	\end{equation}
	We now give lower bounds on $\Delta_{\bm{\lambda}}^t$ and $\Delta_{\m{x}}^{t}$, respectively.
	Using $1/\C{L}$-strong convexity of $\tilde{f}^*(\cdot)$, we have
	\begin{align}\label{cor3.8 eq2}
		&\Delta_{\bm{\lambda}}^t=g\left({\bm{\lambda}}^*\right)-g\left({\bm{\lambda}}^t\right)
		=\tilde{f}^*(-\m{Z}^{1/2}\bm{\lambda}^t)-\tilde{f}^*(-\m{Z}^{1/2}\bm{\lambda}^*)\\
		\geq&\frac{1}{2\C{L}}\left\|\m{Z}^{1/2}\bm{\lambda}^t-\m{Z}^{1/2}\bm{\lambda}^* \right\|^2
		\geq \frac{\mu^2}{2\C{L}}\left\|\m{x}^*(\bm{\lambda}^t)-\m{x}^*(\bm{\lambda}^*) \right\|^2,\notag
	\end{align}
	where the last inequality follows from \eqref{cor3.8 eq1}. 
	On the other hand, by the $\mu$-strong convexity of $\tilde{L}_{\alpha}(\cdot,\bm{\lambda}^t)$, we obtain
	\begin{align}\label{cor3.8 eq3}
		\Delta_{\m{x}}^t=\tilde{L}_{\alpha}\left(\m{x}^t, {\bm{\lambda}}^t\right)-\tilde{L}_{\alpha}\left(\m{x}^*\left({\bm{\lambda}}^t\right), {\bm{\lambda}}^t\right)
		\geq  \frac{\mu}{2}\left\|\m{x}^t-\m{x}^*(\bm{\lambda}^t) \right\|^2.
	\end{align}
	Substituting \eqref{cor3.8 eq2} and \eqref{cor3.8 eq3} into the expression of $\Delta^t$ yields
	\begin{align}\label{cor3.8 eq4}
		\Delta^t=&7\Delta_{\bm{\lambda}}^t+\Delta_{\m{x}}^t
		\geq\frac{7\mu^2}{2\C{L}}\left\|\m{x}^*(\bm{\lambda}^t)-\m{x}^*(\bm{\lambda}^*) \right\|^2+\frac{\mu}{2}\left\|\m{x}^t-\m{x}^*(\bm{\lambda}^t) \right\|^2\\
		\geq &\frac{\mu}{2}\min\left\{\frac{7\mu}{\C{L}},1\right\}\Big[\left\|\m{x}^*(\bm{\lambda}^t)-\m{x}^*(\bm{\lambda}^*) \right\|^2+\left\|\m{x}^t-\m{x}^*(\bm{\lambda}^t) \right\|^2\Big]\notag\\
		\geq &\frac{\mu}{4}\min\left\{\frac{7\mu}{\C{L}},1\right\}\left\|\m{x}^t-\m{x}^*(\bm{\lambda}^*) \right\|^2,\notag
	\end{align}
	where the last inequality is due to the inequality that $\|\m{a}\|^2+\|\m{b}\|^2 \geq (1/2)\|\m{a}+\m{b}\|^2$ for any $\m{a},\m{b} \in R^{np}$. Thus, by using Theorem \ref{the3.7}, we have
	\begin{equation*}
		\frac{\mu}{4}\min\left\{\frac{7\mu}{\C{L}},1\right\}\left\|\m{x}^t-\m{x}^*(\bm{\lambda}^*) \right\|^2\leq\kappa^t\Delta^0.
	\end{equation*}
	The proof for the case $S=1$ follows the above steps and using the definitions of $\beta_1$, $\beta_2$ defined in \eqref{bound1} and   $\gamma_1$, $\gamma_2$ defined in \eqref{bound2} completes the proof.
	\end{proof}
	\begin{remark}
		The condition numbers of the objective function and augmented Lagrangian function are defined as
		\begin{equation*}
			\kappa_f:=\frac{L}{\mu} \; \text{ and } \; \kappa_l:=\frac{\C{L}}{\mu}.
		\end{equation*}
		The condition number of the network can be defined as
		\begin{equation*}
			\kappa_g:=\frac{\lambda_{\max}(\m{I}-\tilde{\m{W}})}{\lambda_2(\m{I}-\tilde{\m{W}})}=\frac{\rho}{\sigma},
		\end{equation*}
		which measures the network topology and is an important factor affecting the performance 
		of decentralized methods. In general, a smaller condition number means greater connectivity of the network.
		From \textbf{Corollary \ref{cor3.8}}, we can derive the iteration complexity of our DPDM as
		$$
		\mathcal{O}\left( \kappa_l^2\kappa_g log\left( \frac{1}{\epsilon}\right) \right). $$ 
		When $S$ is sufficiently large, the iteration complexity of GDPDM can reduce to $\mathcal{O}\left( \kappa_l \kappa_g log\left( \frac{1}{\epsilon}\right) \right) $.
		
		Our methods do not have a faster-than-linear rate since the dual Hessian is approximated by the BB technique. 
		It is well-known that the BB method only has a linear convergence rate for minimizing strongly convex problems, which is the same as the steepest
		descent (SD) method. However, this does not limit the practical much superior performance
		of the BB method over the SD method. In our experiments, our methods are directly compared with the primal-dual methods using other dual ascent step sizes, such as NT and ESOM.
		
		Note that existing decentralized second-order and quasi-Newton methods exhibit different performance across several aspects, including primal-dual updates, convergence rates, computational costs, 
		etc.. Here we compare our method with other major decentralized second-order and quasi-Newton methods. The comparisons are summarized in Table~\ref{comparison}.

		\begin{table}[H]
			\tiny
			\caption{Comparisons of decentralized second-order and quasi-Newton methods}\label{comparison}
			\centering
			\begin{threeparttable}
				\begin{tabular}{c|c|c|c|c|c|c|c}
					\hline
					&\multirow{2}{*}{Primal order}  &\multirow{2}{*}{Dual order}  &\multirow{2}{*}{\shortstack{Convergence\\ rate}}
					&\multirow{2}{*}{Iteration complexity}
					&\multirow{2}{*}{Assumption\tnote{3}}
					&\multirow{2}{*}{\shortstack{Calculate\\ Hessian or not}} &\multirow{2}{*}{\shortstack{Communication\\ overhead\tnote{4}}}   \\
					&  &  &  & &  &  &  \\
					\hline
					\multirow{3}{*}{NT\tnote{5}\cite{zhang2021newton}} &\multirow{3}{*}{Second}  &\multirow{3}{*}{First}  &\multirow{3}{*}{Linear} &\multirow{3}{*}{\shortstack{$O( \max \{\kappa_f \sqrt{\kappa_g}+ \kappa_f^2,$ \\ $\kappa_g^{3/2}/\kappa_f+\kappa_f \sqrt{\kappa_g}\} log( \frac{1}{\epsilon}) ) $}} &\multirow{3}{*}{SC+LG+LH} &\multirow{3}{*}{Yes}  &\multirow{3}{*}{$p$}  \\
					&  &  &  & &  &  &  \\
					&  &  &  & &  &  &  \\
					ESOM-$K$\cite{mokhtari2016decentralized}&Second  &First  &Linear &$O\left( \kappa_f^2/\sigma log\left( \frac{1}{\epsilon}\right) \right) $ &SC+LG+LH &Yes  &$(1+K)p$  \\
					\hline
					DBFGS\cite{eisen2017decentralized}&Quasi second\tnote{1} &$\backslash$  &Inexactly linear\tnote{2} &$\backslash$ &SC+LG &No  &$3p$  \\
					
					DR-LM-DFP\cite{zhang2023variance}&Quasi second  &$\backslash$  &Linear &$O\left( \kappa_f^2\kappa_g^2 log\left( \frac{1}{\epsilon}\right) \right) $ &SC+LG &No  &$2p$  \\
					
					D-LM-BFGS\cite{zhang2023variance}&Quasi second  &$\backslash$  &Linear &$O\left( \kappa_f^2\kappa_g^2 log\left( \frac{1}{\epsilon}\right) \right) $ &SC+LG &No  &$2p$  \\
					
					PD-QN-$K$\cite{eisen2019primal}&Quasi second  &Quasi second  &$\backslash$ &$\backslash$ &SC+LG &No  &$(4+K)p$  \\
					
					DPDM&Quasi second  &Quasi second  &Linear &$O\left( \kappa_l^2\kappa_g log\left( \frac{1}{\epsilon}\right) \right) $ &SC+LG &No  &$(2+ I_{R \setminus \{0\}}(\theta))p+2$\tnote{$\star$}  \\
					\hline
				\end{tabular}
				\begin{tablenotes}
					\item[1] ``Quasi second" means the second-order information is captured by Hessian approximations using gradient information. 
					\item[2] ``Inexactly linear'' means that the method only converges linearly to a small neighborhood of the solution.
					\item [3] ``SC'', ``LG'', ``LH'' respectively mean ``Strong convexity'', ``Lipschitz continuous gradient'', ``Lipschitz continuous Hessian''.
					\item[4] For the fixed underlying network, communication overhead can be defined as rounds of communication per iteration $\times$ dimensions of transmitted scalars, vectors, or matrices. 
					\item[5] The iteration complexity of NT is computed when $\m{x}^t$ is close to $\m{x}^*$. So the global iteration complexity is larger than the value given above. 
					\item[$\star$] Here, the addition of $2$ is because of the additional $2$ scalar communications of $a_j^t$ and $b_j^t$ in each iteration of DPDM. 
				\end{tablenotes}
			\end{threeparttable}
		\end{table}
		
	\end{remark}
	
	\section{NUMERICAL EXPERIMENTS}
	In this section, we would like to test and compare our developed algorithms with 
	some well-developed first-order and second-order algorithms 
	on solving the linear regression problems and the 
	logistic regression problems over a connected undirected network with edge density $d \in (0,1]$.
	An additional experiment about the effect of network topology is also presented here.
	For the generated network, we choose the Metropolis constant edge weight matrix \cite{xiao2007distributed} 
	as the mixing matrix, that is
	\begin{equation*}
		\tilde{W}_{i j}=\left\{\begin{array}{cl}
			\frac{1}{\max \{\operatorname{deg}(i), \operatorname{deg}(j)\}+1}, & \text { if }(i, j) \in \mathcal{E}, \\
			0, & \text { if }(i, j) \notin \mathcal{E} \text { and } i \neq j, \\
			1-\sum_{k \in \mathcal{N}_i \setminus \{i\}} \tilde{W}_{i k}, & \text { if } i=j,
		\end{array}\right.
	\end{equation*}
	where $(i, j) \in \mathcal{E}$ indicates there is an edge between 
	node $i$ and node $j$, and $\operatorname{deg}(i)$ means the degree of node $i$. 
	
	Based on our analysis, we also propose an algorithm $\text{GDPDM}^{+}$, which applies an adaptive stop 
	criteria for the inner loop as stated in \textbf{Algorithm \ref{alg:Framwork3}}, while the remaining steps are the same as those of GDPDM.
	In fact, it is not difficult to see the global convergence of $\text{GDPDM}^{+}$ can be also obtained
	within the convergence analysis framework of $\text{GDPDM}$.
	Our comparison algorithms in this section include GT\cite{qu2017harnessing}, EXTRA\cite{shi2015extra}, ESOM-1\cite{mokhtari2016decentralized}(called ESOM below), NT\cite{zhang2021newton}, DBFGS\cite{eisen2017decentralized}, Damped regularized limited-memory DFP\cite{zhang2023variance}(called DR-LM-DFP below), and Damped limited-memory BFGS\cite{zhang2023variance}(called D-LM-BFGS below). 
	\begin{algorithm}[H]
		\caption{Inner loop of $\text{GDPDM}^{+}$ with respect to node $i$}
		\label{alg:Framwork3}
		\begin{algorithmic}[1]
			\State $\m{x}_i^{t,0}=\m{x}_i^{t}$, $\m{H}_i^{t,0}=\m{H}_i^{t}$, block$(i)=1$.
			\State For $s =0, \ldots, S-1$ 
			\State
			\quad If block$(i)==1$
			\State\quad \quad Update $\m{x}_i^{t,s+1}$ by \eqref{pu2}.
			\State
			\quad \quad
			Update $\m{H}_i^{t,s+1}$ by \eqref{Hit}.
			\State
			\quad Else
			\State
			\quad \quad $\m{x}_i^{t,s+1}=\m{x}_i^{t,s}$, and $\m{H}_i^{t,s+1}=\m{H}_i^{t,s}$.
			\State \quad End if.
			\State \quad If $\|\m{x}_i^{t,s+1}-\m{x}_i^{t}\| \le c\|\m{v}_i^{t}-\m{v}_i^{t-1}\| (0<c<1)$
			\State \quad \quad block$(i)=0$.
			\State \quad End if.
			\State End for.
		\end{algorithmic}
	\end{algorithm}
	
	We use GDPDM(K) and $\text{GDPDM(K)}^{+}$to denote GDPDM and $\text{GDPDM}^{+}$ with $S=K$, respectively.
	In addition, we update $r^t$  in \eqref{pi} as 
	\begin{equation}\label{rt}
		r^t=c_r (\eta_r)^t,
	\end{equation}
	where $c_r>0$ and $0<\eta_r<1$, and set $\overline{\omega} >> \underline{\omega} >0$ to be
	a large and a small positive constant, respectively. 
	Recall that $\tilde{p}_i^{t}$ is an approximation of $p^t$, which can be rewritten as
	\begin{equation*}
		p^{t} =\left(\frac{\|\bm{\lambda}^{t} - \bm{\lambda}^{t-1}\|_{\hat{\m{B}}^{t+1}}^2}{\|\bm{\lambda}^{t} - \bm{\lambda}^{t-1}\|^2}+r^t \right)^{-1},
	\end{equation*}
	where $\hat{\m{B}}^{t+1}={(\m{I} - \m{W})^{1/2}} (\m{B}^{t+1})^{-1} {(\m{I} - \m{W})^{1/2}}$. By \textbf{Assumptions} \textbf{\ref{as3}}, we have $\frac{1}{\rho \bar{\psi}+r^t}\leq p^t \leq\frac{1}{\sigma \psi +r^t}$. If $\tilde{p}_i^{t}$ tends to $p^{t}$, we would have $\tilde{p}_i^{t} \in [\frac{1}{\rho \bar{\psi}+r^t},\frac{1}{\sigma \psi +r^t}] \subset [\frac{1}{\overline{\omega}+r^t},\frac{1}{\underline{\omega}+r^t}]$.
	The success of each algorithm is measured by
	\begin{equation*}
		\mbox{Relative error} :=	\frac{1}{n}\sum_{i=1}^n\frac{\|\m{x}^t_i-\m{z}^*\|}{\|\m{z}^*\|+1},
	\end{equation*}
	where the true solution $\m{z}^*$ is explicitly obtained for the linear regression problem
	and is pre-computed by a centralized algorithm for the logistic regression problem.
	In our experiments, we would analyze the impact of networks with different condition numbers. 
	So we further introduce the communication volume which can be calculated as follows: 
	\begin{align*}\notag
		&\text{Communication volume}\\
		=&\text{~number of iterations}\\
		&\times \text{number of communication rounds per iteration}\\
		&\times \text{number of edges, i.e., }dn(n-1)/2\\
		&\times \text{dimension of transmitted vectors on each edge}.
	\end{align*}
	In all experiments, we set the number of nodes $n=10$. For all comparison algorithms, we initialize $\m{x}^0=\m{0}$ and set $\m{v}^0=\m{0}$ for our algorithms. 
	All experiments are coded in MATLAB R2017b and run on a laptop with Intel Core i5-9300H CPU, 16GB RAM, and Windows 10 operating system.
	
	\subsection{Linear Regression Problem}
	In this subsection, we investigate the impacts of the condition number of the objective function, denoted as $\kappa_f$, by comparing our algorithms with first-order algorithms, EXTRA\cite{shi2015extra} and GT\cite{qu2017harnessing}. We consider the following optimization problem
	\begin{equation}\label{linear_problem}
		\mathop {\min }\limits_{\m{z} \in {R^p}} \sum_{i=1}^n \frac{1}{2} \m{z}\tr \m{A}_i \m{z} + \m{b}_i\tr \m{z},
	\end{equation}
	where $\m{A}_i \in R^{p \times p}$ and $\m{b}_i \in R^p$ are private data available to node $i$. To control the condition number of problem \eqref{linear_problem}, we construct $\m{A}_i = \m{Q}\tr \operatorname {diag} \{ a_1,...,a_p \} \m{Q}$, where $\m{Q}$ is a random orthogonal matrix. We set $a_1=1$ and $a_p$ as an arbitrarily large number, 
	and generate $a_j \sim \operatorname{U}(1,2)$ for $j=2, \ldots, p-1$, where $\operatorname{U}(1,2)$ represents the uniform distribution from 1 to 2. So $\kappa_f=a_p/a_1=a_p$.
	
	%At first, we show the impact of $S$ (the number of primal steps per iteration) on the algorithm. 
	We set $p=1000$ and edge density $d=0.36$ for the network, where $\kappa_g = 9.8$. We set $a_p$ = 10, 100, 1000, 10000. All the algorithm parameters are set for their better performance and are listed in Table \ref{table0}  where parameter notations follow the source papers.
	%In GDPDM(1)(GDPDM(2);GDPDM(4);GDPDM(3);$\text{GDPDM(4)}^{+}$), $\beta=0.49(0.31;0.22;0.17;0.57)$, $\gamma=1$, $\omega=0.32$, $\alpha=2.8$ and $r^t=0.95^t$. We set $c=0.4$ in $\text{GDPDM(4)}^{+}$.
		\begin{table}[H]
		\caption{Parameter settings for linear regression}\label{table0}
		\centering
		\begin{tabular}{c|c|c|c}
			\hline
			&\multirow{2}{*}{GT}  &\multirow{2}{*}{EXTRA }   &\multirow{2}{*}{\shortstack{GDPDM(1)(GDPDM(2);GDPDM(3);\\GDPDM(4);$\text{GDPDM(4)}^{+}$)}}  \\
			& & &\\
			\hline
			$\kappa_f=10$&$\backslash$   &$\backslash $  & \multirow{4}{*}{\shortstack{$\beta=0.49(0.31;0.22;0.17;0.57)$, $\omega=0.32$, \\ $\gamma=1$, $\alpha=2.8$, $r^t=0.95^t$, $c=0.4$ }} \\
			\cline{1-3}
			$\kappa_f=100$&$\eta=5\times 10^{-3}$   &$\alpha= 10^{-2}$  & \\
			\cline{1-3}
			$\kappa_f=1000$&$\eta=5\times 10^{-4}$    &$\alpha= 10^{-3}$  &  \\
			\cline{1-3}
			$\kappa_f=10000$& $\eta=5\times 10^{-5}$  &$\alpha= 10^{-4}$  &    \\
			\hline
		\end{tabular}
	\end{table}

	From figure \ref{GDPDM1}, We find that for such simple quadratic problems, GDPDM(1), namely DPDM is most efficient since the outer iteration number has no significant change 
	but the communication cost and CPU time increase as the number of inner iterations increases. 
	In this case, one BFGS iteration is sufficient to minimize the quadratic primal problem.
	
	%Then we consider cases for different condition numbers where $a_p$ is set as 100, 1000 and 10000. We set $n=10$, $p=1000$ and edge density $d=0.36$ for the network, where $\frac{\lambda_{\max}(\m{I}-\tilde{\m{W}})}{\lambda_2(\m{I}-\tilde{\m{W}})} = 9.8$.  The stepsizes of EXTRA are set as 0.01, 0.001 and 0.0001. The stepsizes of GT are chosen as 0.005, 0.0005, and 0.00005. For DPDM, $\beta=0.49$, $\gamma=1$, $\omega=0.32$, $\alpha=2.8$ and $r^t=0.95^t$.
	
	%The parameters of the above algorithms are selected by the grid search. 
	
	Figure \ref{condition_number} shows that our algorithm is more robust to the problem condition number than EXTRA and GT. The convergence rate of first-order algorithms becomes obviously slow when the condition number increases.

	%From figure \ref{fig_3_a} and \ref{fig_4_a}, we see that when the condition number is 100, increasing the number of inner iterations can cause lower communication efficiency and higher time cost compared to first-order algorithms. In such simple quadratic programming problems, we draw a conclusion that GDPDM(1) is most efficient for GDPDM(K) with different K since the convergence rate has no significant change but the communication cost and the consumed time increase as the number of inner iterations rises. 
	
	\begin{figure}[H]
		\centering
		\subfloat[]{\includegraphics[width=3.5in]{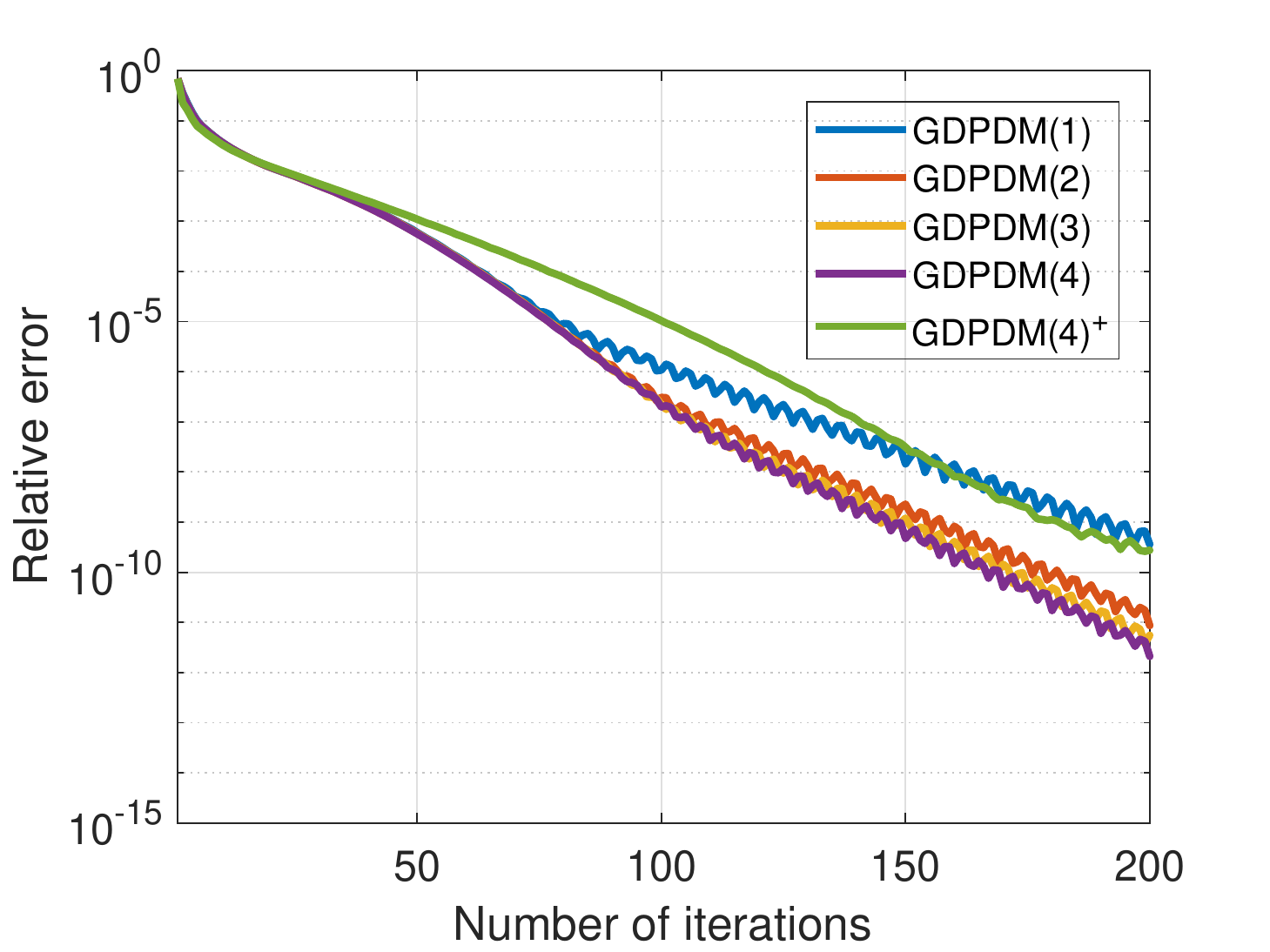}%
			\label{fig_1_a}}
		\hfil
		\subfloat[]{\includegraphics[width=3.5in]{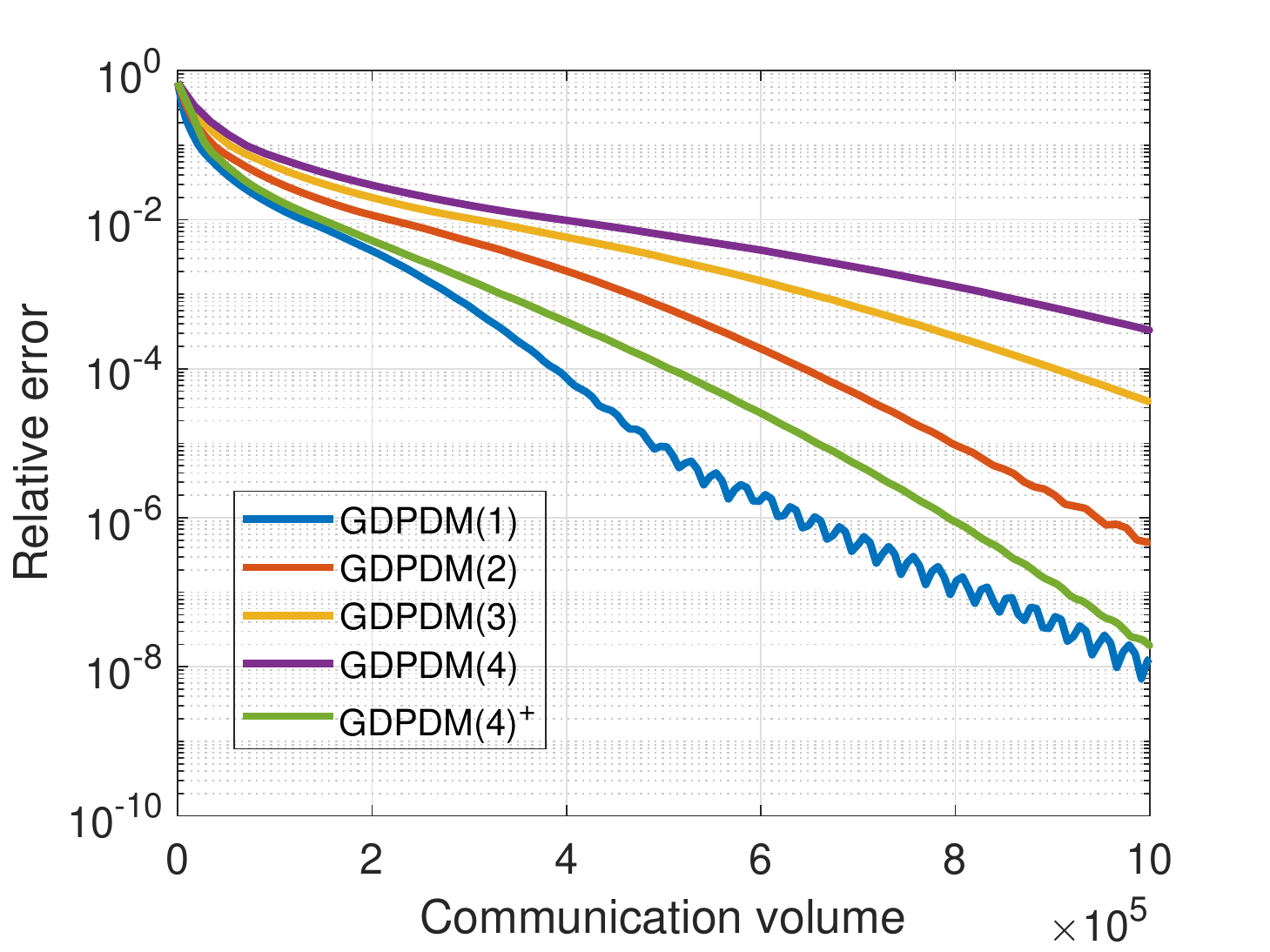}%
			\label{fig_1_b}}
		\caption{Comparisons among GDPDMs in terms of the iteration number and communication volume for $\kappa_f=10$.}
		\label{GDPDM1}
	\end{figure}
	
	\begin{figure}[H]
		\centering
		\subfloat[]{\includegraphics[width=3.5in]{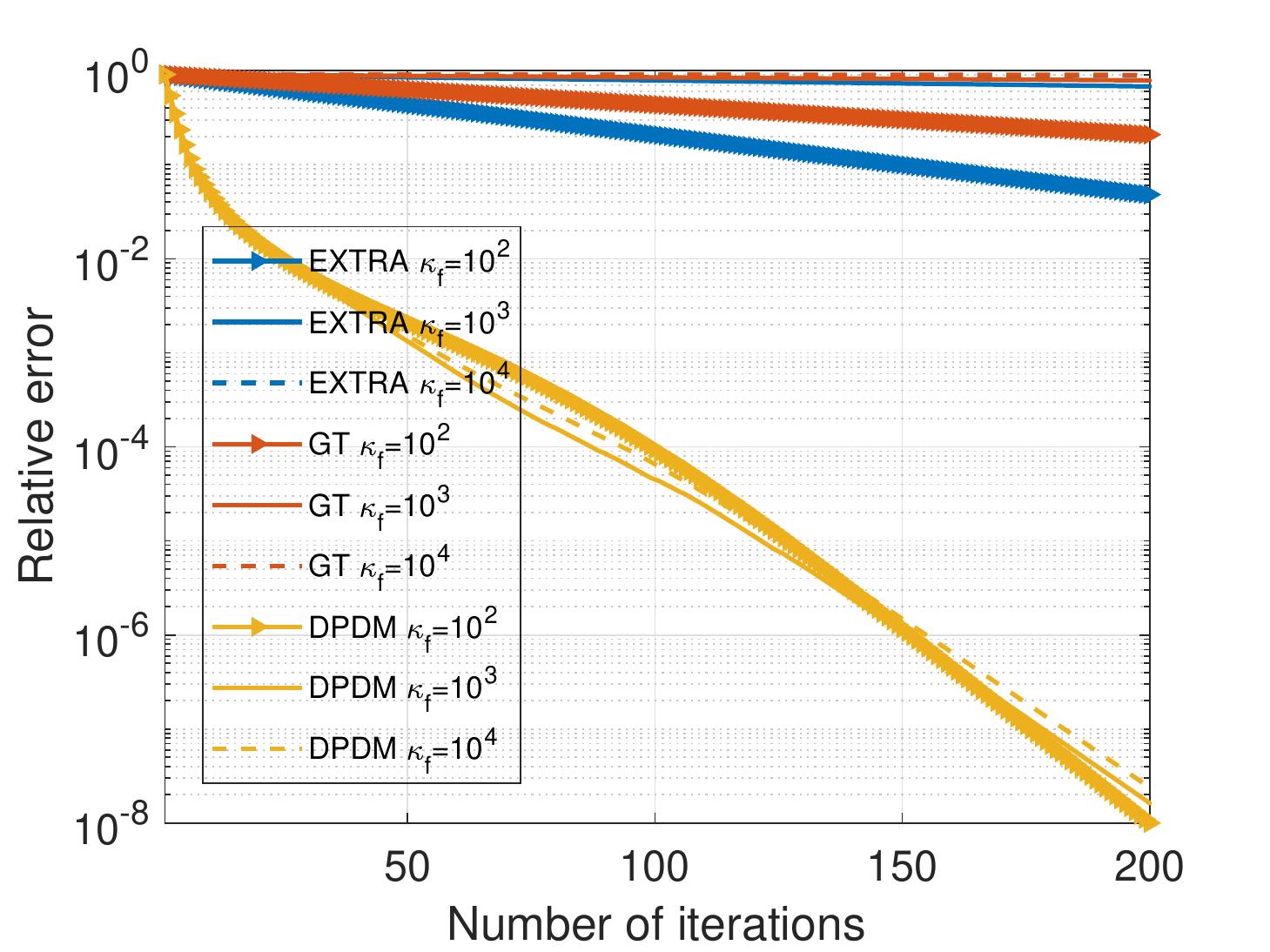}%
			\label{fig_2_a}}
		\hfil
		\subfloat[]{\includegraphics[width=3.5in]{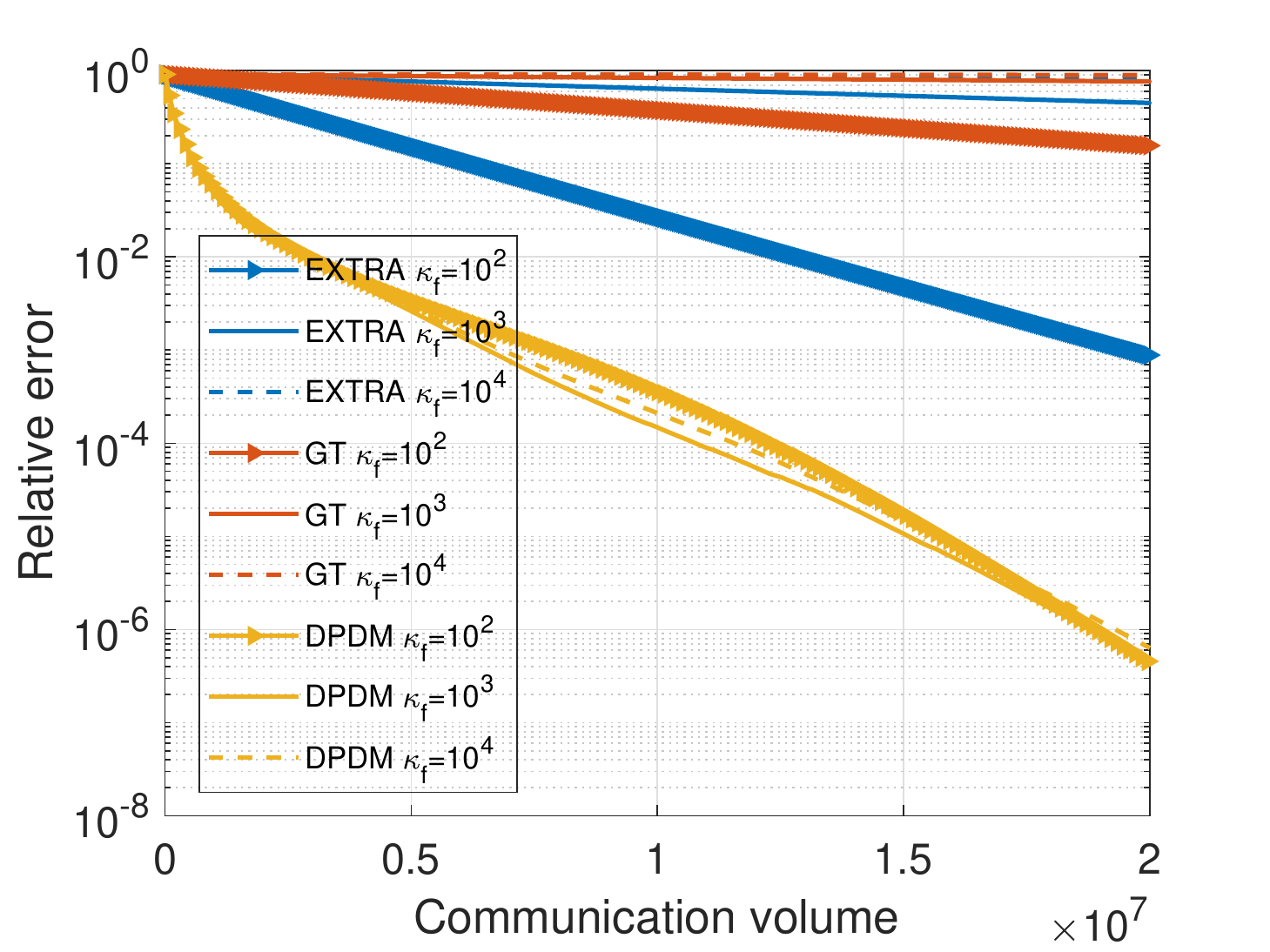}%
			\label{fig_2_b}}
		\hfil
		\subfloat[]{\includegraphics[width=3.5in]{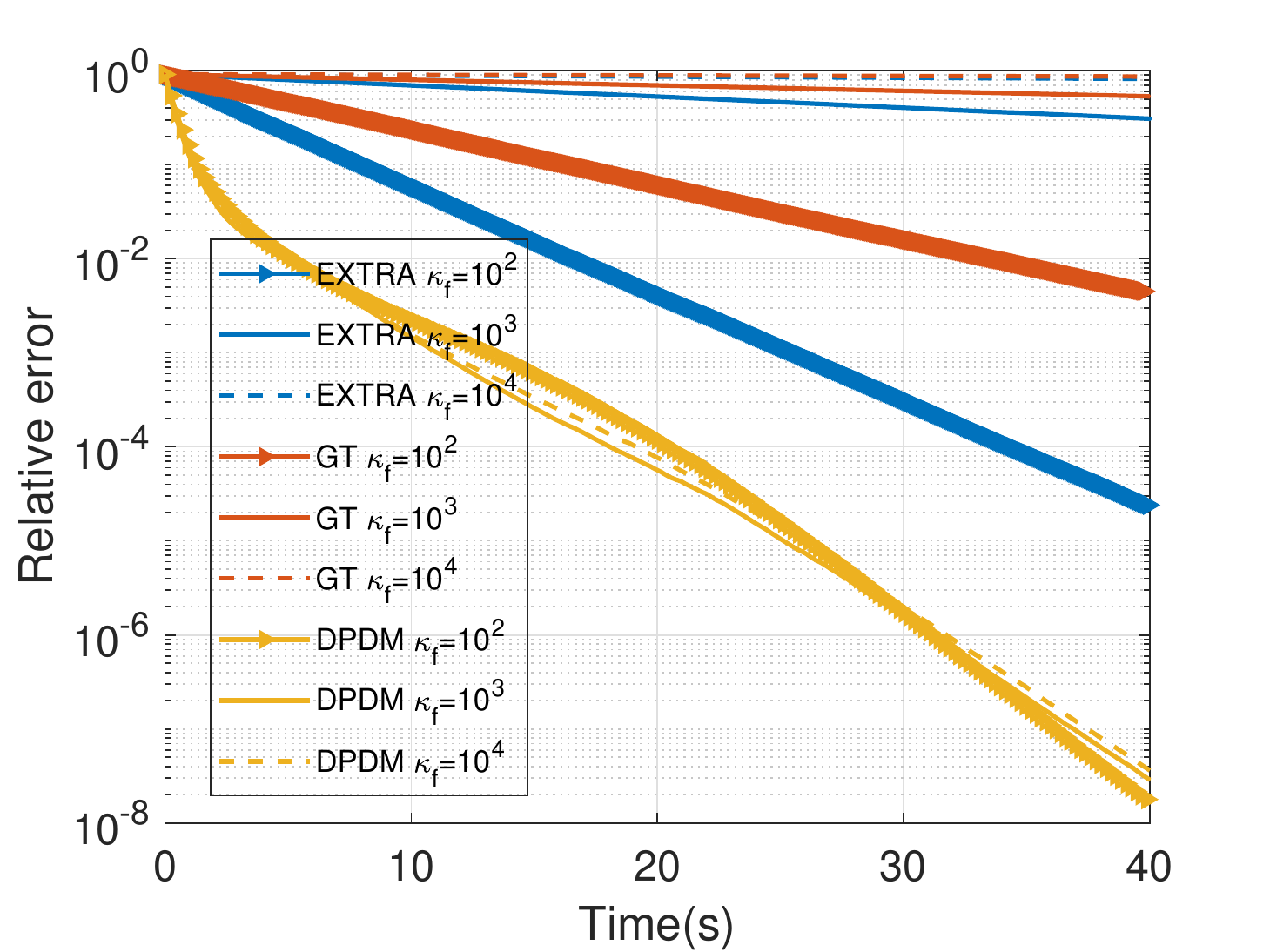}%
			\label{fig_2_c}}
		\caption{Comparisons with decentralized first-order algorithms
			in terms of the iteration number, communication volume, and CPU time (in seconds).}
		\label{condition_number}
	\end{figure}

	\subsection{Logistic Regression Problem}
	In this subsection, we firstly compare our algorithms with  GT \cite{qu2017harnessing} and two second-order algorithms: ESOM\cite{mokhtari2016decentralized} and NT\cite{zhang2021newton}, which use Hessian in the primal domain with dual ascent in the dual domain. We consider the logistic regression 
	\begin{equation}\label{logistic_problem}
		\mathop {\min }\limits_{\m{z} \in {R^p}} \sum_{i=1}^n  \sum_{j=1}^{n_i} \log \left(1+\exp (-b_{ij} \m{a}_{ij}\tr \m{z} ) \right)+\frac{ \hat{\lambda}}{2}\|\m{z}\|^2,
	\end{equation}
	where $\m{a}_{ij} \in R^p$ are the feature vectors and $ b_{ij} \in \{-1,+1\}$ are the labels. The experiments are conducted on four datasets
	from the LIBSVM library: \textbf{mushroom}, \textbf{ijcnn1}, \textbf{w8a} and \textbf{a9a}.
	The edge density $d=0.36$ and the regularization parameter $\hat{\lambda}=1$. 
	All the algorithm parameters are set for their better performance and are listed in Table \ref{table2} where parameter notations follow the source papers.
	
	%For \textbf{mushroom}, parameters of compared algorithms are list. The stepsize of GT is set as 0.08. In ESOM, $\epsilon=0.1$ and $\alpha=3.9$. In NT, $\epsilon=3.1$ and $\alpha=3.9$. In GDPDM(1)(GDPDM(2);GDPDM(4);$\text{GDPDM(4)}^{+}$), $\beta=0.46(0.33;0.21;0.51)$, $\gamma=1$, $\omega=0.18(0.18;0.19;0.18)$, $\alpha=3.6$, and $r^t=0.95^t$. We set $c=0.2$ in $\text{GDPDM(4)}^{+}$.
	
	%For \textbf{ijcnn1}, parameters of compared algorithms are list. The stepsize of GT is set as 0.06. In ESOM, $\epsilon=0.3$ and $\alpha=3.9$. In NT, $\epsilon=3.6$ and $\alpha=3.9$. In GDPDM(1)(GDPDM(2);GDPDM(4);$\text{GDPDM(4)}^{+}$), $\beta=0.41(0.33;0.22;0.48)$, $\gamma=1$, $\omega=0.15$, $\alpha=4.0$, and $r^t=0.95^t.$ We set $c=0.2$ in $\text{GDPDM(4)}^{+}$.
	
	%For \textbf{w8a}, parameters of compared algorithms are list. The stepsize of GT is set as 0.06. In ESOM, $\epsilon=0.1$ and $\alpha=3.9$. In NT, $\epsilon=3.2$ and $\alpha=3.9$. In GDPDM(1)(GDPDM(2);GDPDM(4);$\text{GDPDM(4)}^{+}$), $\beta=0.43(0.36;0.23;0.45)$, $\gamma=1$, $\omega=0.14$, $\alpha=4.0$, and $r^t=0.95^t$. We set $c=0.3$ in $\text{GDPDM(4)}^{+}$.
		\begin{table}[H]
		\scriptsize
		\caption{Parameter settings for logistic regression with decentralized first-order and second-order algorithms and our GDPDMS}\label{table2}
		\centering
		\begin{tabular}{c|c|c|c|c}
			\hline
			&GT  &ESOM  &NT  &GDPDM(1)(GDPDM(2);GDPDM(4);$\text{GDPDM(4)}^{+}$)  \\
			\hline
			\multirow{4}{*}{\shortstack{\textbf{mushroom}\\ \# of samples ($\sum_{i=1}^n n_i=8120$)\\\# of features ($p=112$)}}&\multirow{4}{*}{$\eta=0.08$}  &\multirow{4}{*}{$\epsilon=0.3$, $\alpha=3.9$}  &\multirow{4}{*}{$\epsilon=3.1$, $\alpha=3.9$}  &\multirow{4}{*}{\shortstack{$\beta=0.48(0.3;0.17;0.51)$, $\omega=0.18$, $\gamma=1$,\\ $\alpha=3.6$, $r^t=0.95^t$, $c=0.6$}}   \\
			&  &  &  &  \\
			&  &  &  &  \\
			&  &  &  &  \\
			\hline
			\multirow{4}{*}{\shortstack{\textbf{ijcnn1}\\ \# of samples ($\sum_{i=1}^n n_i=49990$)\\\# of features ($p=22$)}}&\multirow{4}{*}{$\eta=0.06$ } &\multirow{4}{*}{$\epsilon=0.1$, $\alpha=3.9$}  &\multirow{4}{*}{$\epsilon=3.5$, $\alpha=3.9$}  &\multirow{4}{*}{\shortstack{$\beta=0.41(0.33;0.22;0.44)$, $\omega=0.15$, $\gamma=1$,\\ $\alpha=4$, $r^t=0.95^t$, $c=0.3$}}  \\
			&  &  &  &  \\
			&  &  &  &  \\
			&  &  &  &  \\
			\hline
			\multirow{4}{*}{\shortstack{\textbf{w8a}\\ \# of samples ($\sum_{i=1}^n n_i=49740$)\\\# of features ($p=300$)}}&\multirow{4}{*}{$\eta=0.06$ } &\multirow{4}{*}{$\epsilon=0.2$, $\alpha=4.1$  } &\multirow{4}{*}{$\epsilon=3.4$, $\alpha=4.0$ } &\multirow{4}{*}{\shortstack{$\beta=0.47(0.38;0.22;0.45)$, $\omega=0.17(0.17;0.17;0.16)$,\\ $\gamma=1$, $\alpha=3.6$, $r^t=0.95^t$, $c=0.6$ }}   \\
			&  &  &  & \\
			&  &  &  &  \\
			&  &  &  &  \\
			\hline
			\multirow{4}{*}{\shortstack{\textbf{a9a}\\ \# of samples ($\sum_{i=1}^n n_i=32560$)\\\# of features ($p=123$)}}& \multirow{4}{*}{$\eta=0.06$ }& \multirow{4}{*}{$\epsilon=0.1$, $\alpha=3.9$ } &\multirow{4}{*}{$\epsilon=3.3$, $\alpha=3.9$ } & \multirow{4}{*}{\shortstack{$\beta=0.38(0.34;0.19;0.42)$, $\omega=0.15$, $\gamma=1$, \\ $\alpha=4.0$, $r^t=0.95^t$, $c=0.45$ }}    \\
			&  &  &  &  \\
			&  &  &  &  \\
			&  &  &  &  \\
			\hline
		\end{tabular}
	\end{table}
	
	Figures \ref{mush}, \ref{ijcnn}, \ref{w8a} and \ref{a9a} show that our algorithms converge significantly faster than  GT, ESOM, and NT in terms of both the iteration number and CPU time.
	As more primal updates are allowed in each inner iteration of GDPDMs, the (outer) iteration number is reduced while the CPU time is often increased.
	However, it can be seen that $\text{GDPDM(4)}^{+}$, which adaptively controls the number of inner iterations for solving the more complex and nonlinear logistic regression problem, 
	performs generally best and keeps a good balance between the iteration number and CPU time. We can see that the performance of
	$\text{GDPDM(4)}^{+}$ is close to GDPDM(4) in terms of the number of iterations and has about the same low communication and time cost as GDPDM(1).
	
	On the other hand, since multiple primal updates in each iteration would lead to more communication cost, GDPDMs do not outperform the second-order method NT in terms of communication volume. For saving communication in NT, the topology-dependent term $\alpha(\m{I}-\m{W})$ is removed when involving matrix inverse calculation as shown in \eqref{dNT}. This renders NT needing a big regularization parameter $\epsilon$ to compensate for the loss of network topology information. And $\epsilon$ is required to be bounded below for convergence of NT, which could also affect the fast local convergence.  Most GDPDMs are more communication-efficient than the second-order method ESOM, while all GDPDMs are superior to the first-order method GT. 
	Moreover, ESOM and NT obviously perform significantly worse in terms of CPU time, since exact Hessian and matrix inversions are calculated in their iterations. Figures \ref{fig_5_d}, \ref{fig_6_d}, \ref{fig_7_d} and \ref{fig_8_d} show the needed time and communication volume for reaching a $10^{-10}$-accuracy solution. 
	We think it may not be an ideal way to trade much time efficiency for a slight improvement in communication. Our GDPDMs, such as GDPDM(1), $\text{GDPDM(4)}^{+}$ are located near the bottom left corner, implying they 
	are quite efficient in both communication cost and CPU time.

	\begin{figure}[H]
		\centering
		\subfloat[]{\includegraphics[width=3.5in]{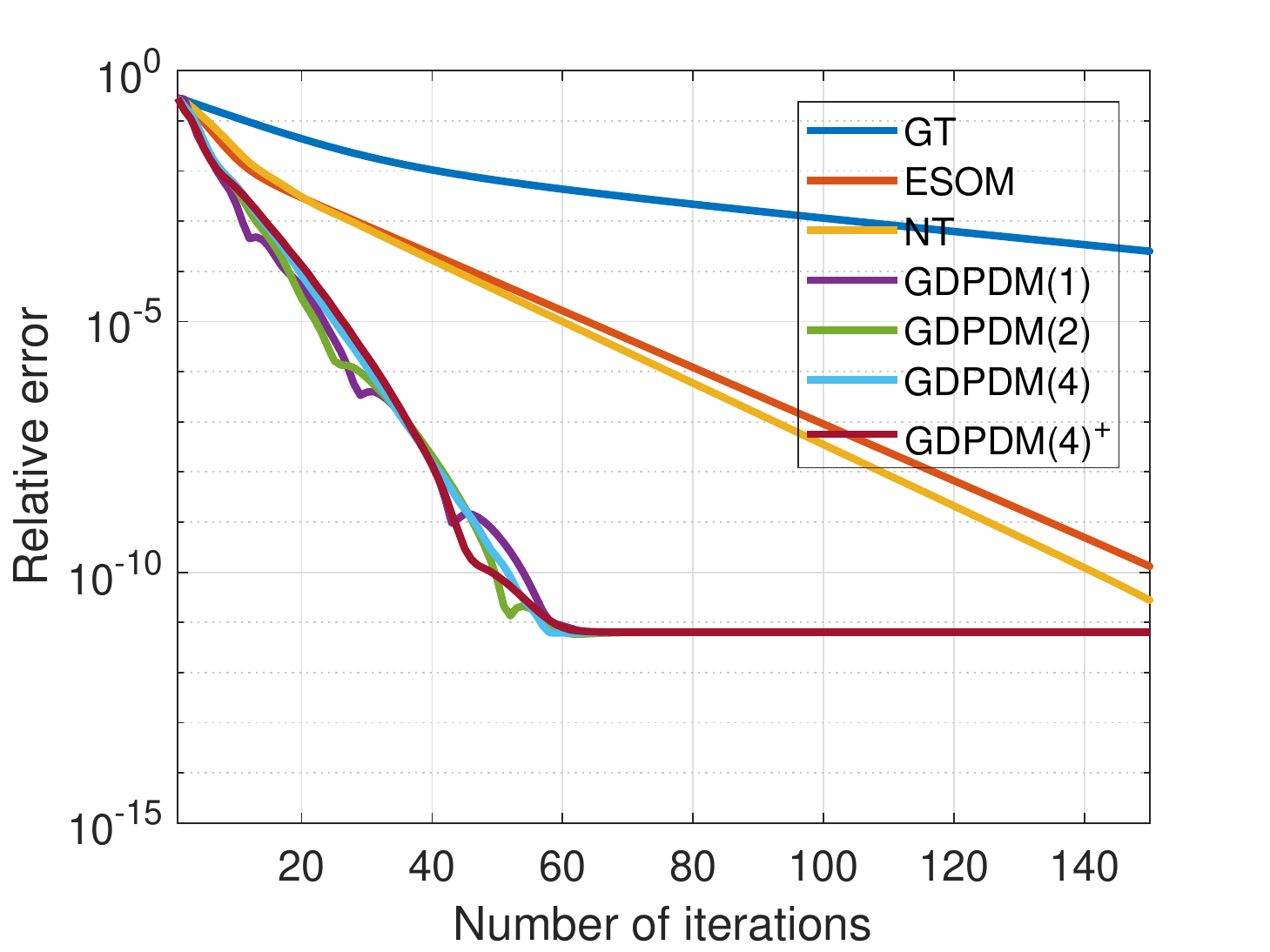}%
			\label{fig_5_a}}
		\hfil
		\subfloat[]{\includegraphics[width=3.5in]{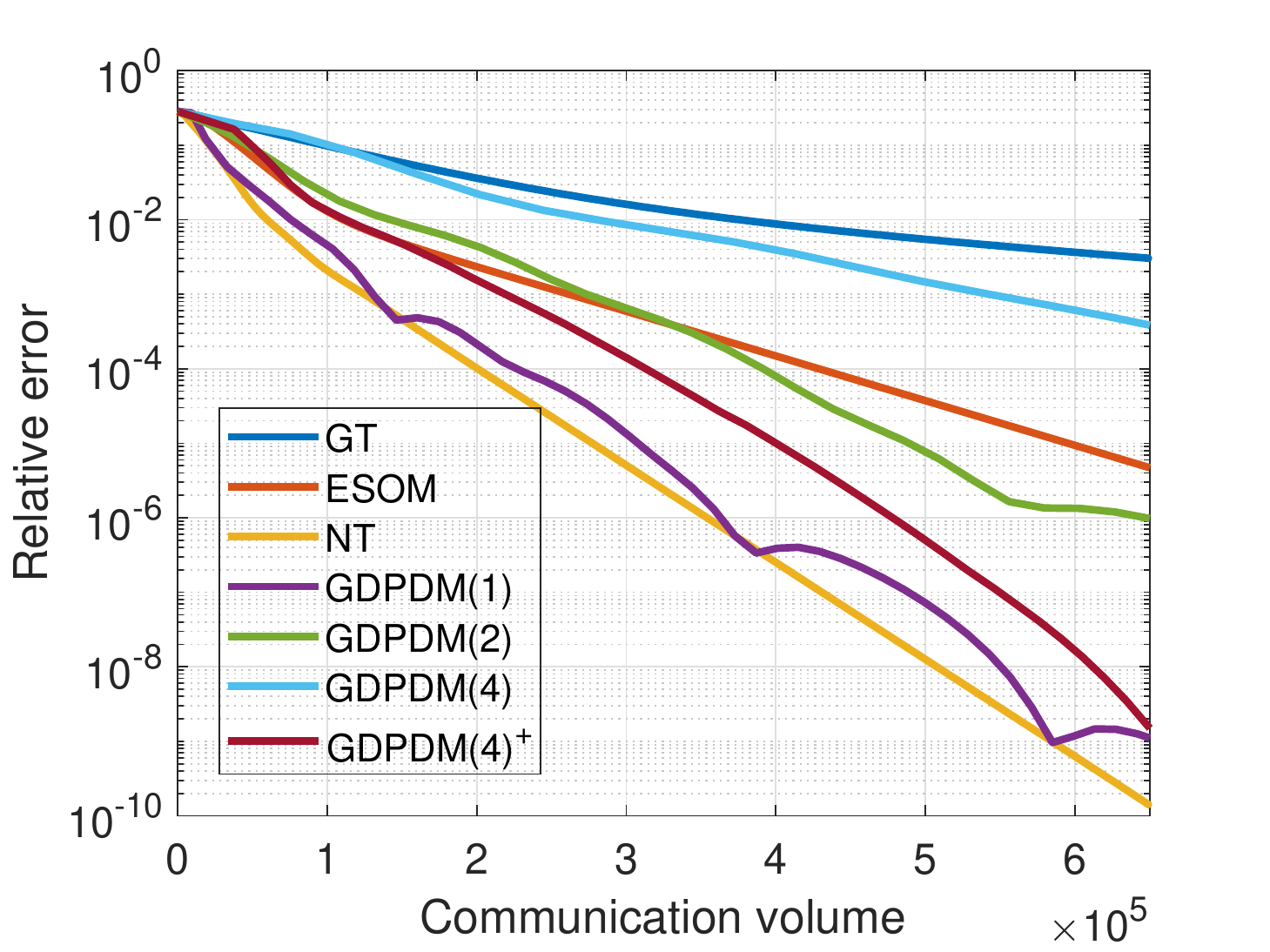}%
			\label{fig_5_b}}
		\hfil
		\subfloat[]{\includegraphics[width=3.5in]{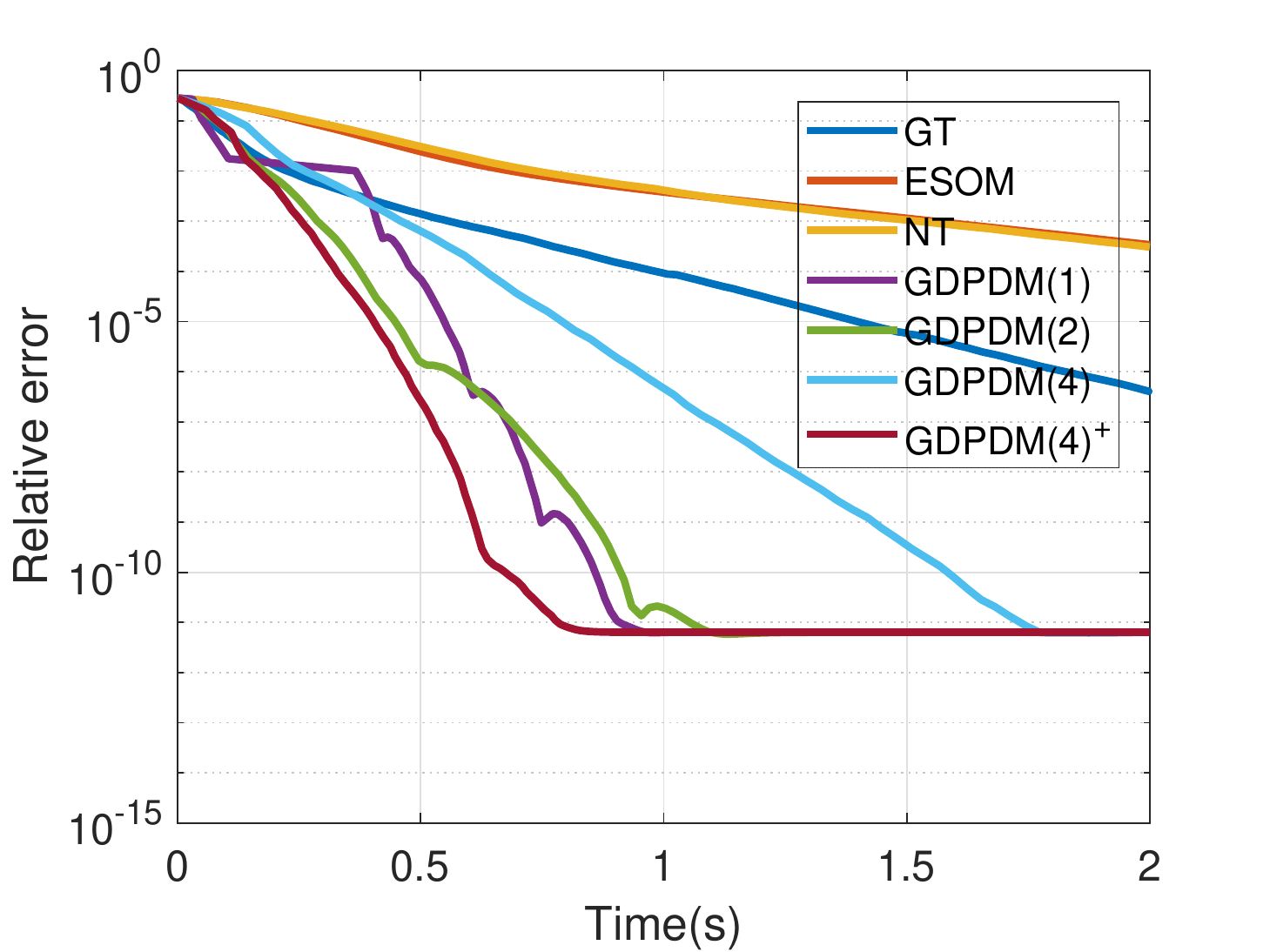}%
			\label{fig_5_c}}
		\hfil
		\subfloat[]{\includegraphics[width=3.5in]{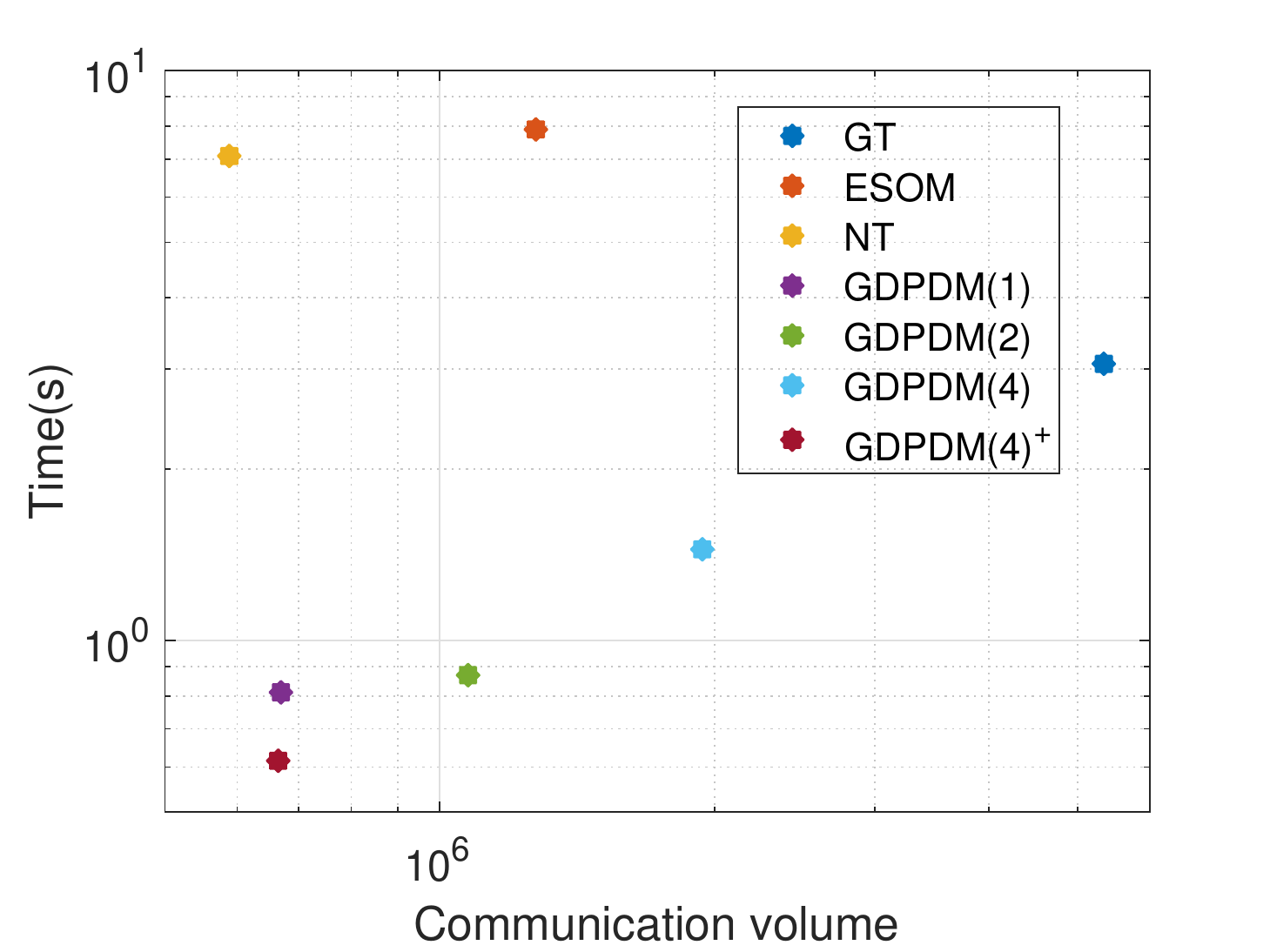}%
			\label{fig_5_d}}
		\caption{(a-c) Comparisons with decentralized second-order algorithms 
			in terms of the iteration number, communication volume, and CPU time (in seconds) using \textbf{mushroom} dataset. (d) Balance between time and communication volume.}
		\label{mush}
	\end{figure}
	
	\begin{figure}[H]
		\centering
		\subfloat[]{\includegraphics[width=3.5in]{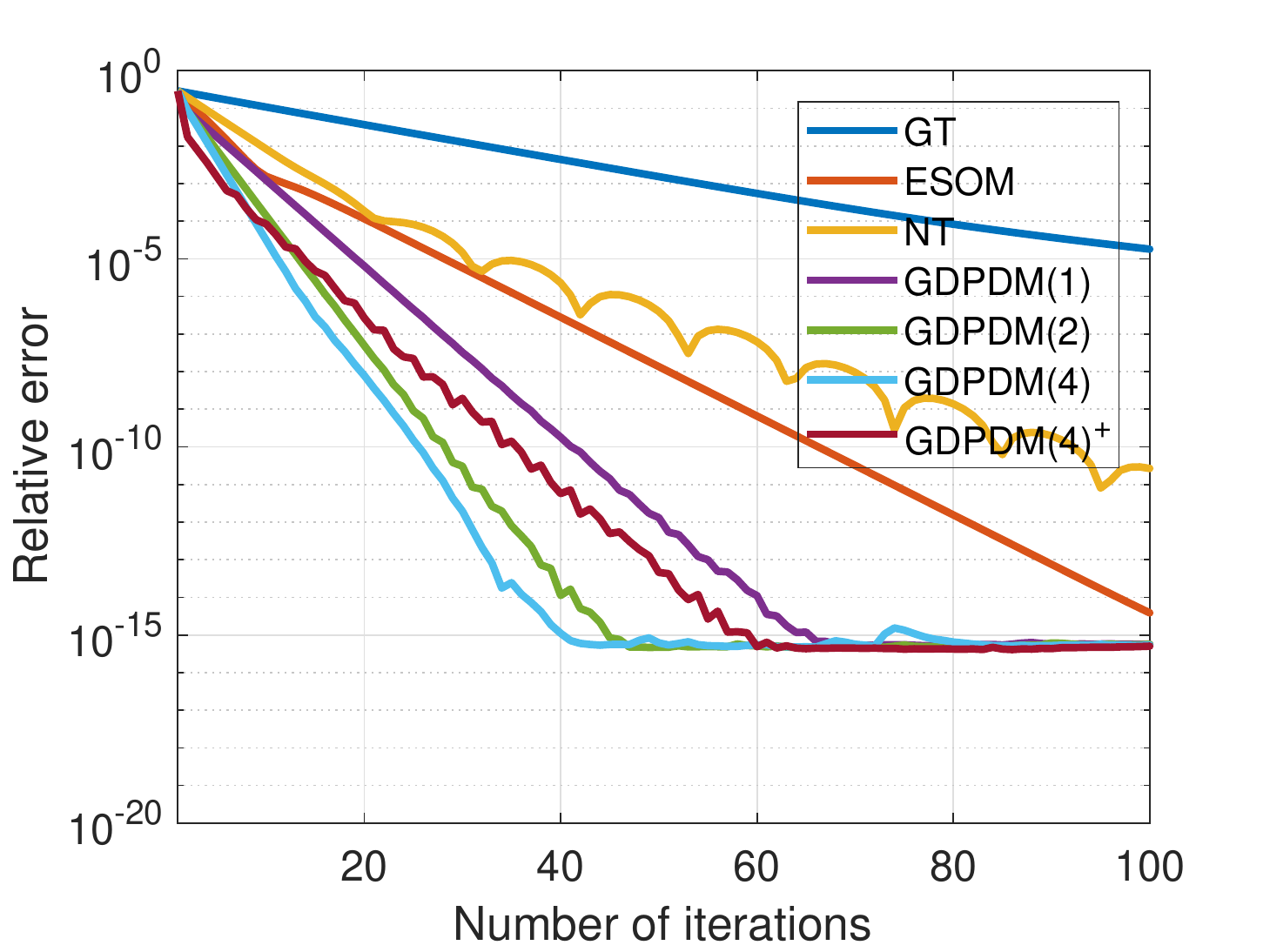}%
			\label{fig_6_a}}
		\hfil
		\subfloat[]{\includegraphics[width=3.5in]{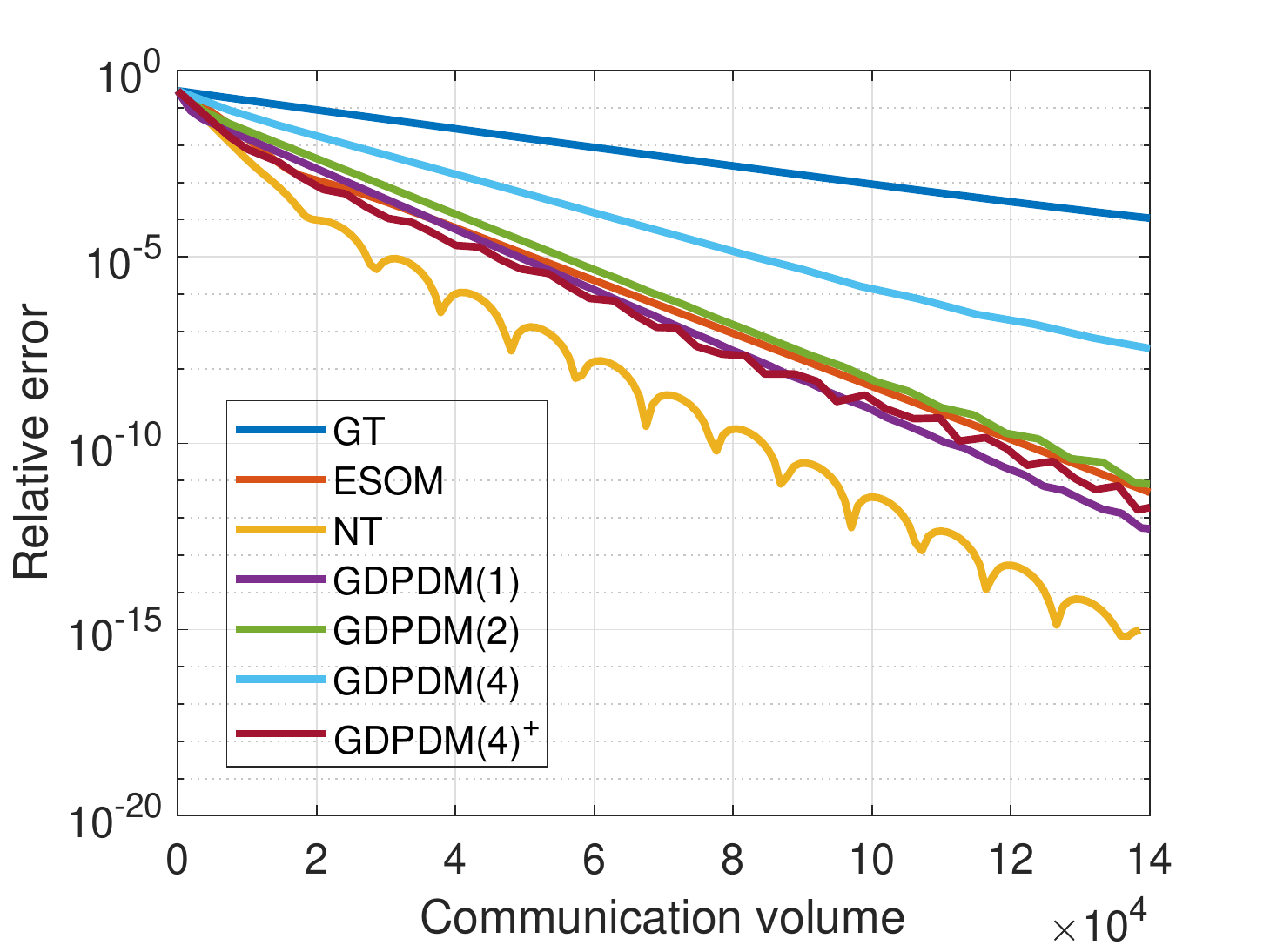}%
			\label{fig_6_b}}
		\hfil
		\subfloat[]{\includegraphics[width=3.55in]{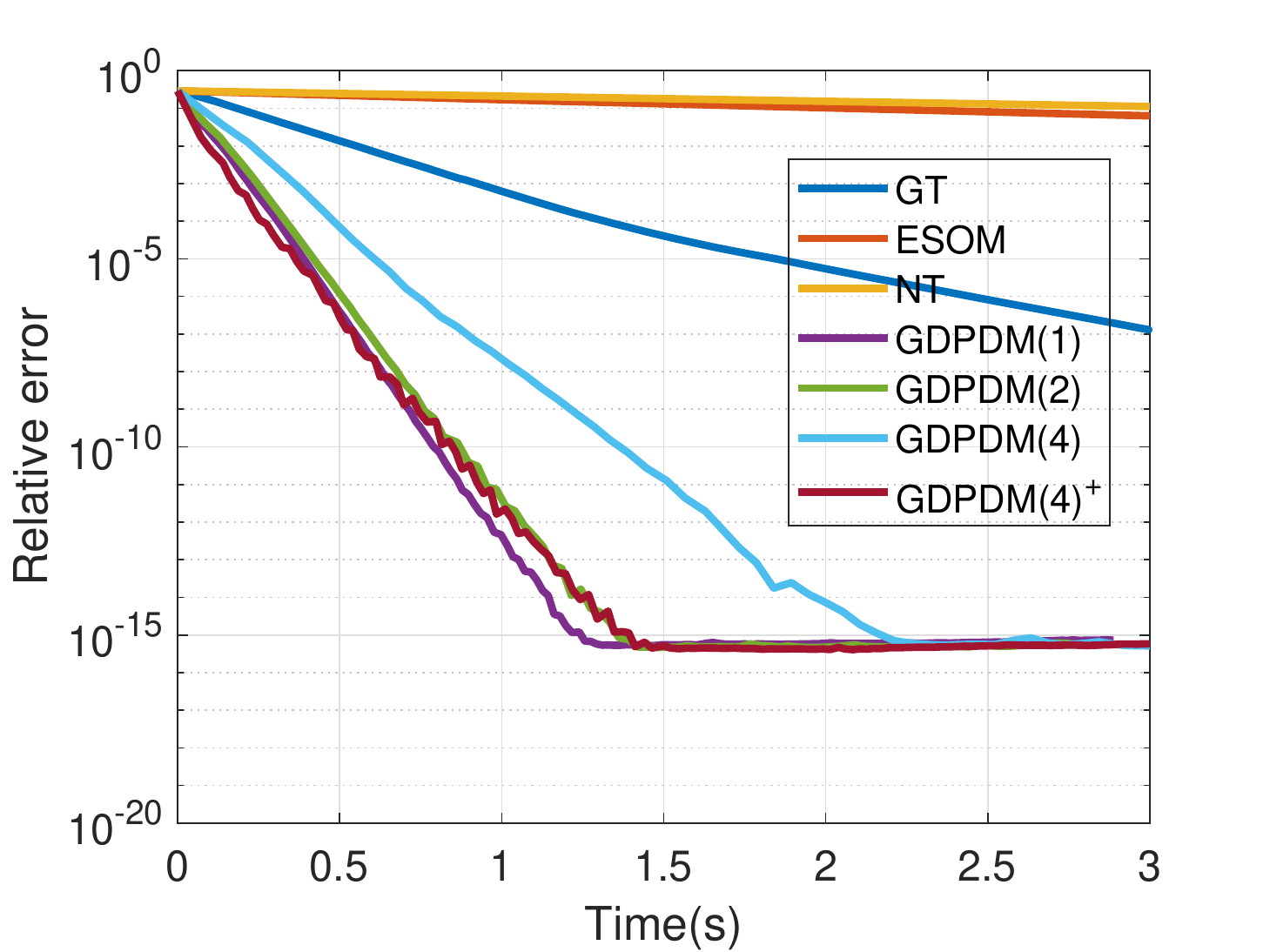}%
			\label{fig_6_c}}
		\hfil
		\subfloat[]{\includegraphics[width=3.5in]{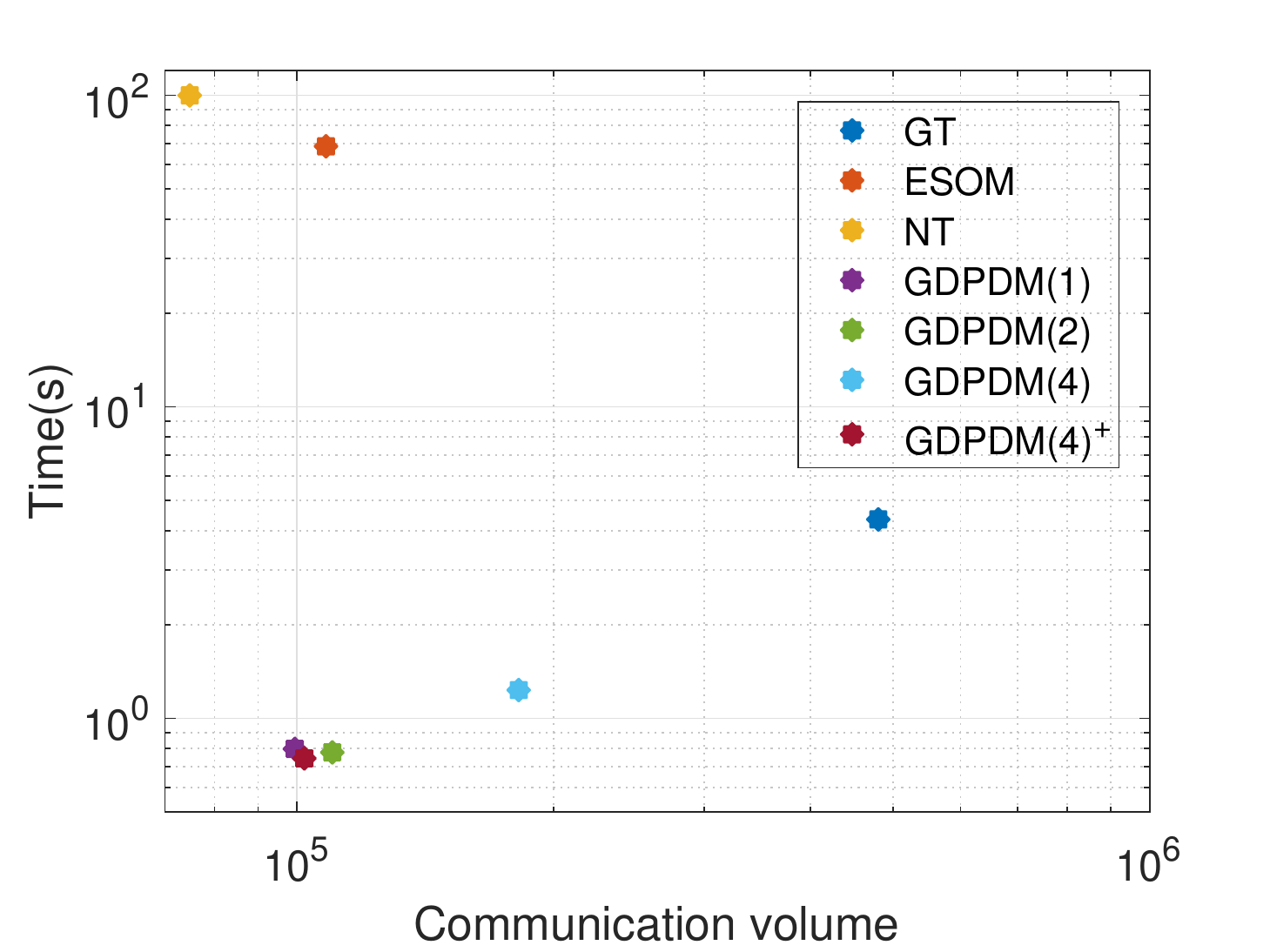}%
			\label{fig_6_d}}
		\caption{(a-c) Comparisons with decentralized second-order algorithms in terms of the iteration number, communication volume, and CPU time (in seconds) using \textbf{ijcnn1} dataset. (d) Balance between time and communication volume.}
		\label{ijcnn}
	\end{figure}

	\begin{figure}[H]
		\centering
		\subfloat[]{\includegraphics[width=3.5in]{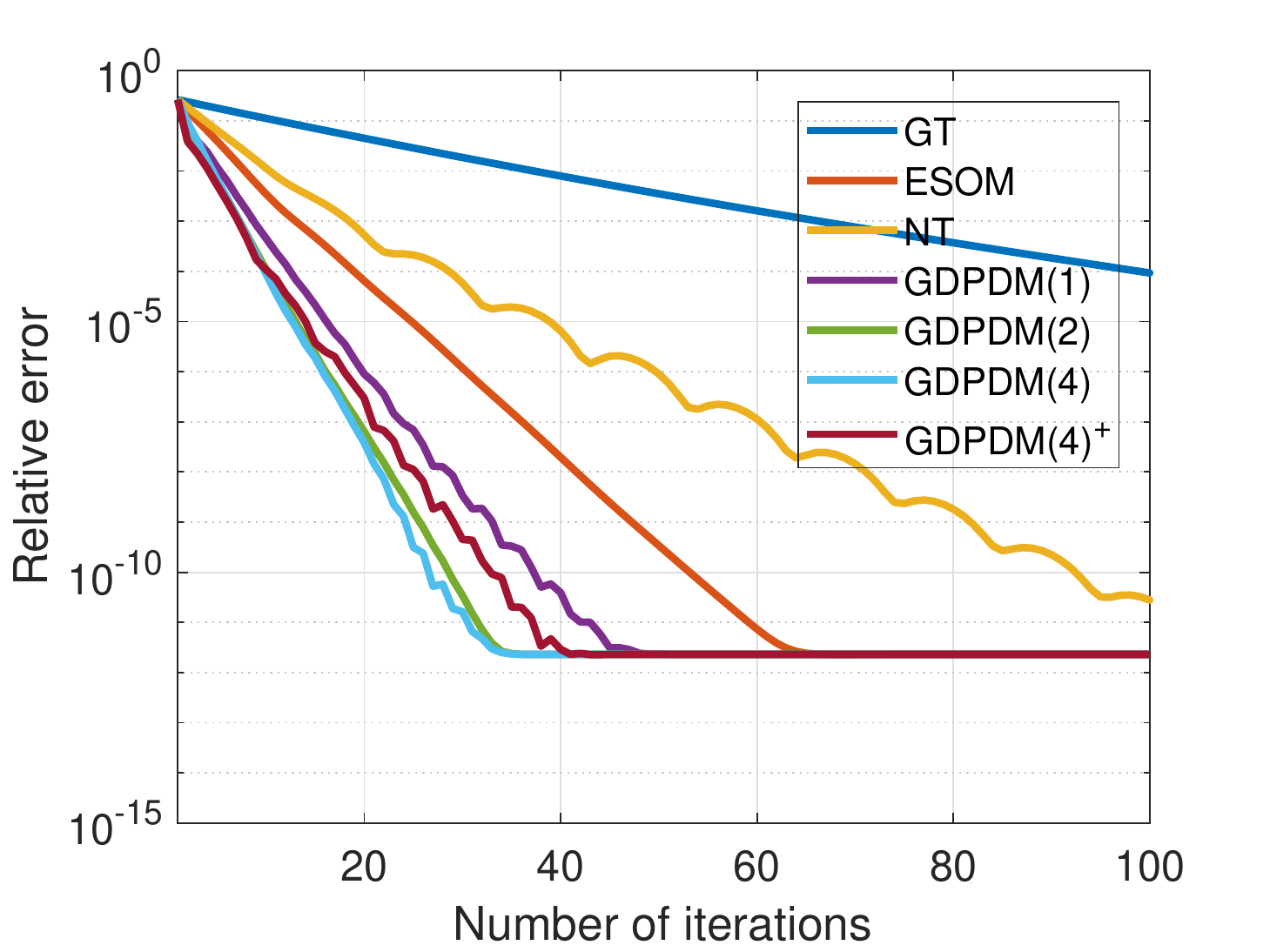}%
			\label{fig_7_a}}
		\hfil
		\subfloat[]{\includegraphics[width=3.5in]{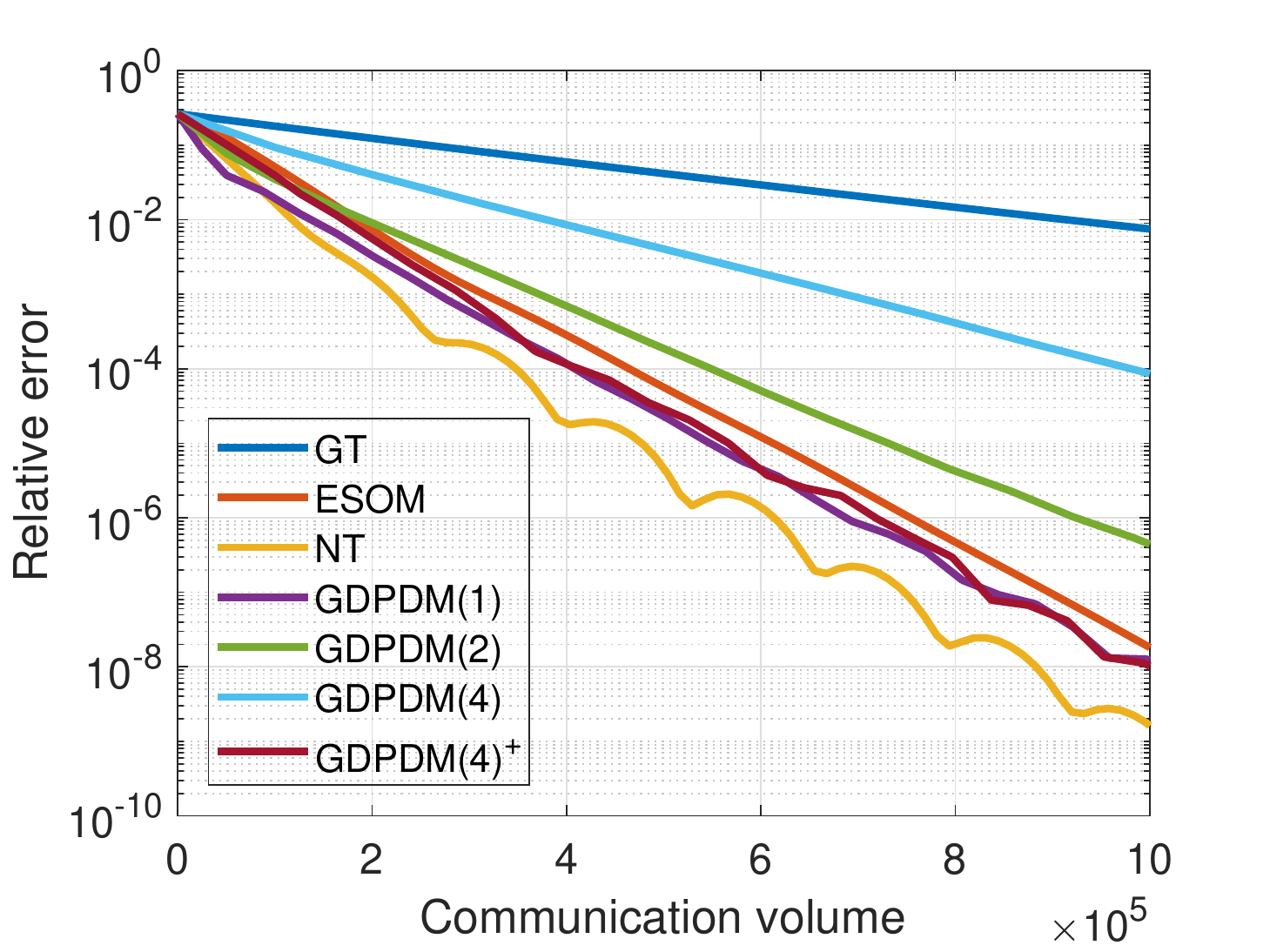}%
			\label{fig_7_b}}
		\hfil
		\subfloat[]{\includegraphics[width=3.5in]{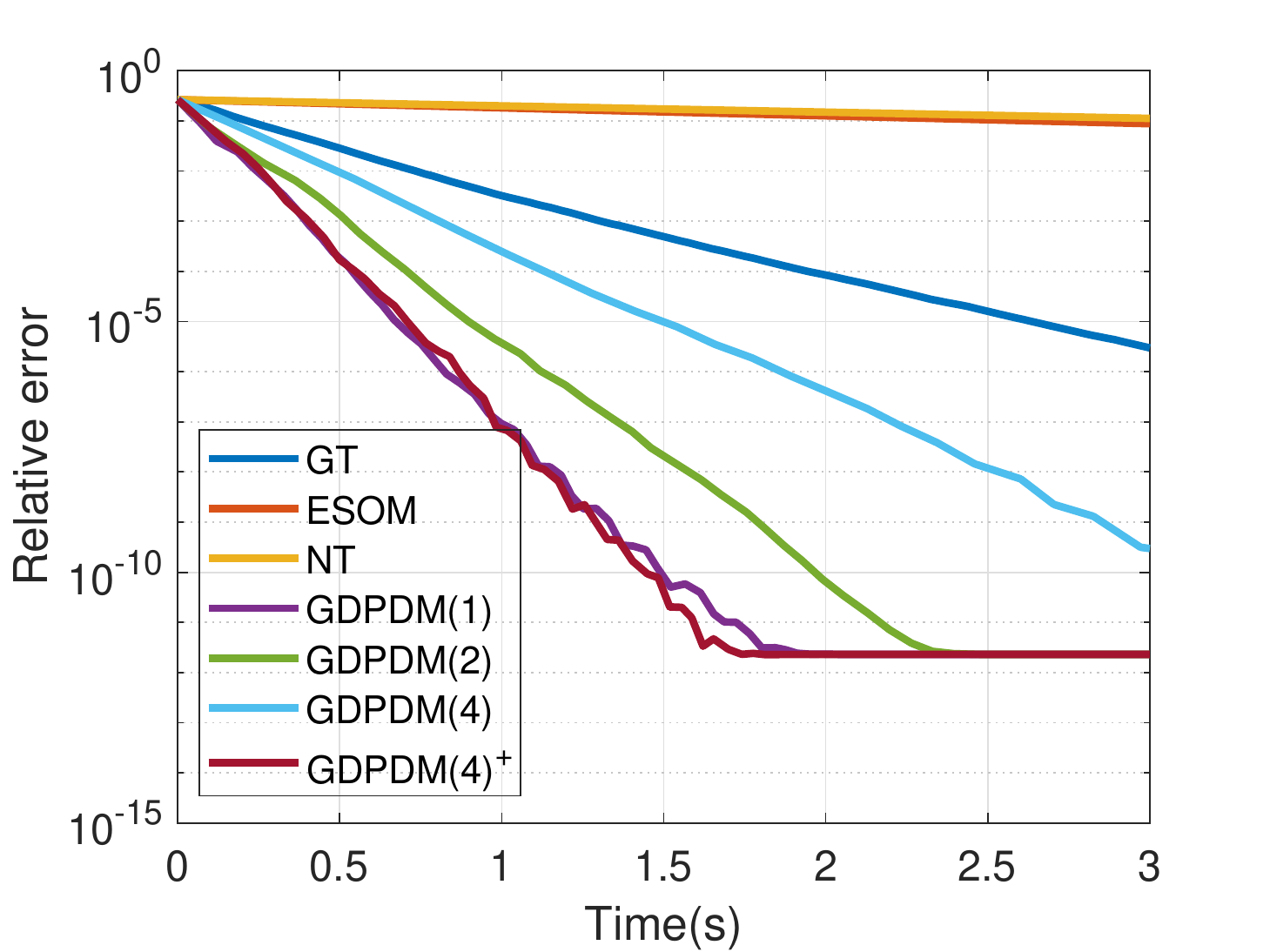}%
			\label{fig_7_c}}
		\hfil
		\subfloat[]{\includegraphics[width=3.5in]{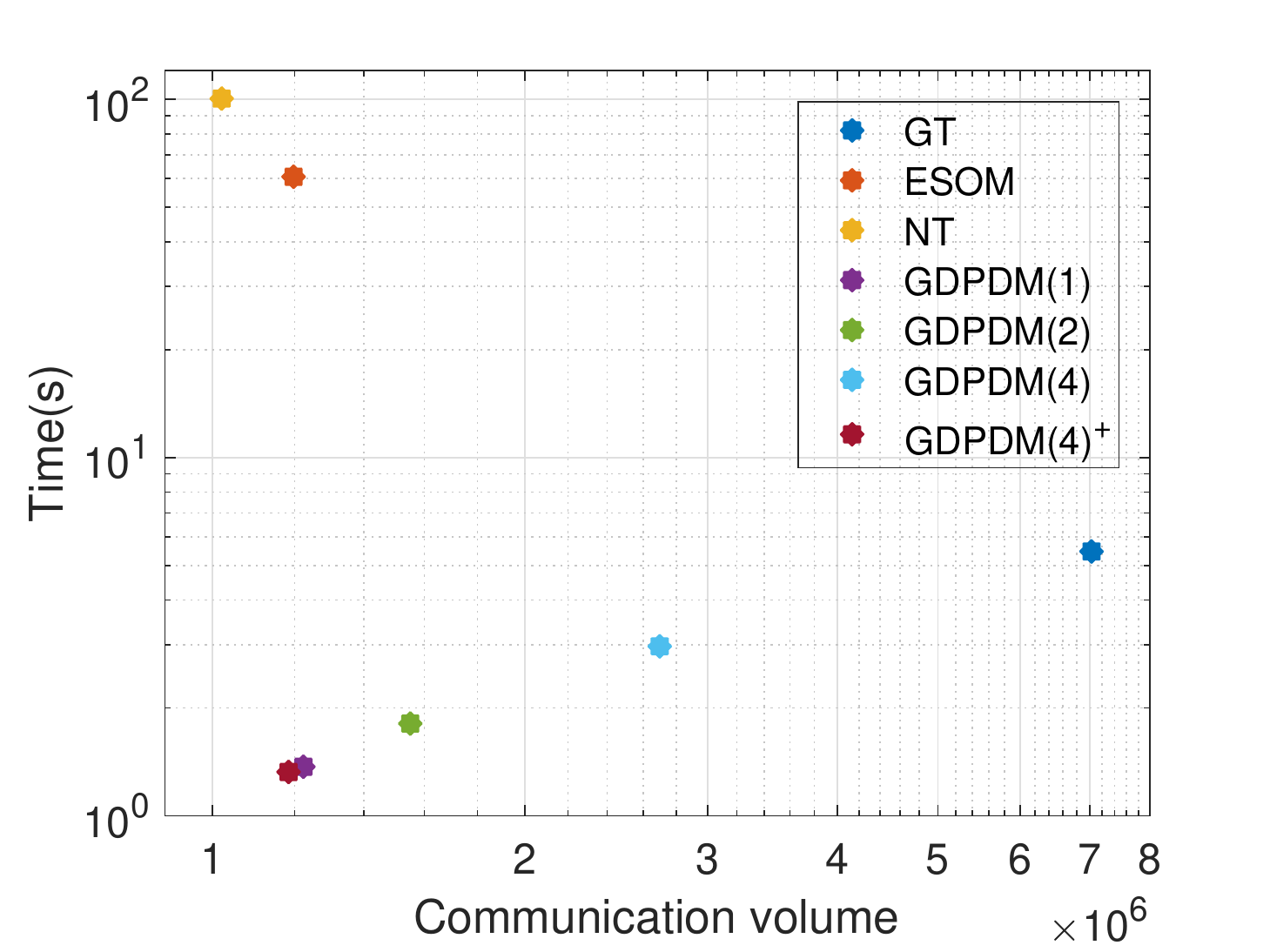}
			\label{fig_7_d}}
		\caption{(a-c) Comparisons with decentralized second-order algorithms in terms of the iteration number, communication volume, and CPU time (in seconds) using \textbf{w8a} dataset. (d) Balance between time and communication volume.}
		\label{w8a}
	\end{figure}
	
	\begin{figure}[H]
		\centering
		\subfloat[]{\includegraphics[width=3.5in]{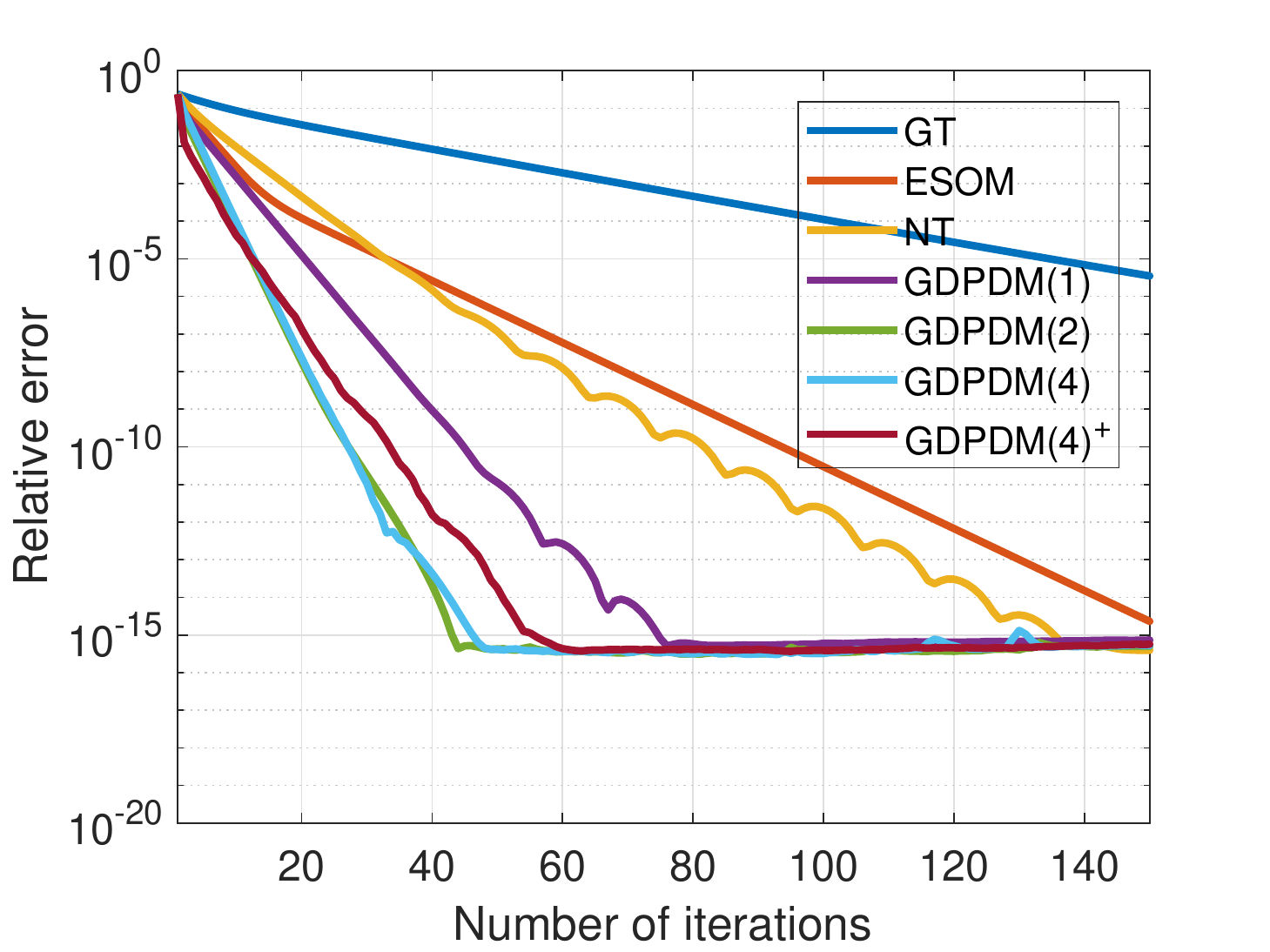}%
			\label{fig_8_a}}
		\hfil
		\subfloat[]{\includegraphics[width=3.5in]{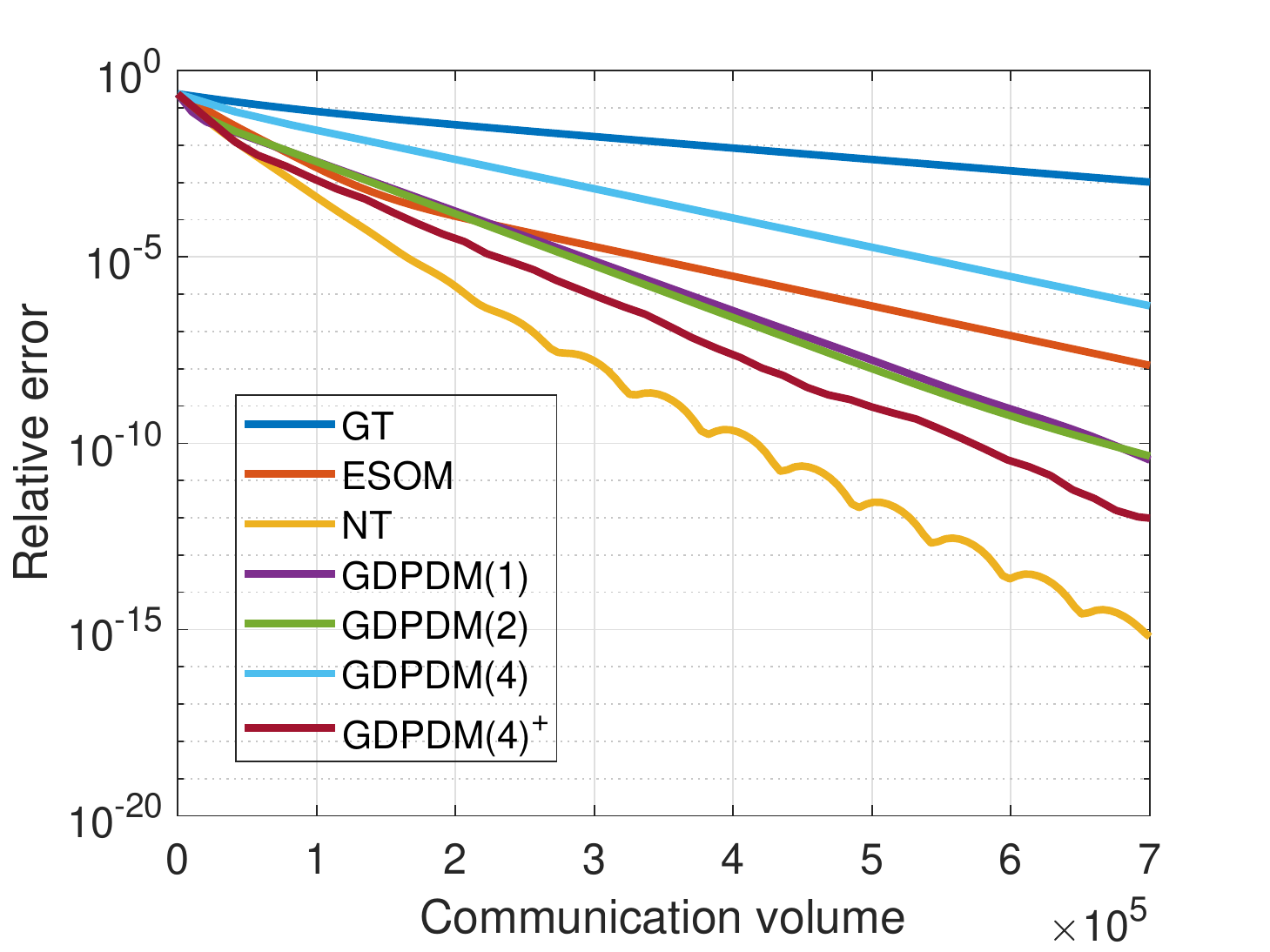}%
			\label{fig_8_b}}
		\hfil
		\subfloat[]{\includegraphics[width=3.5in]{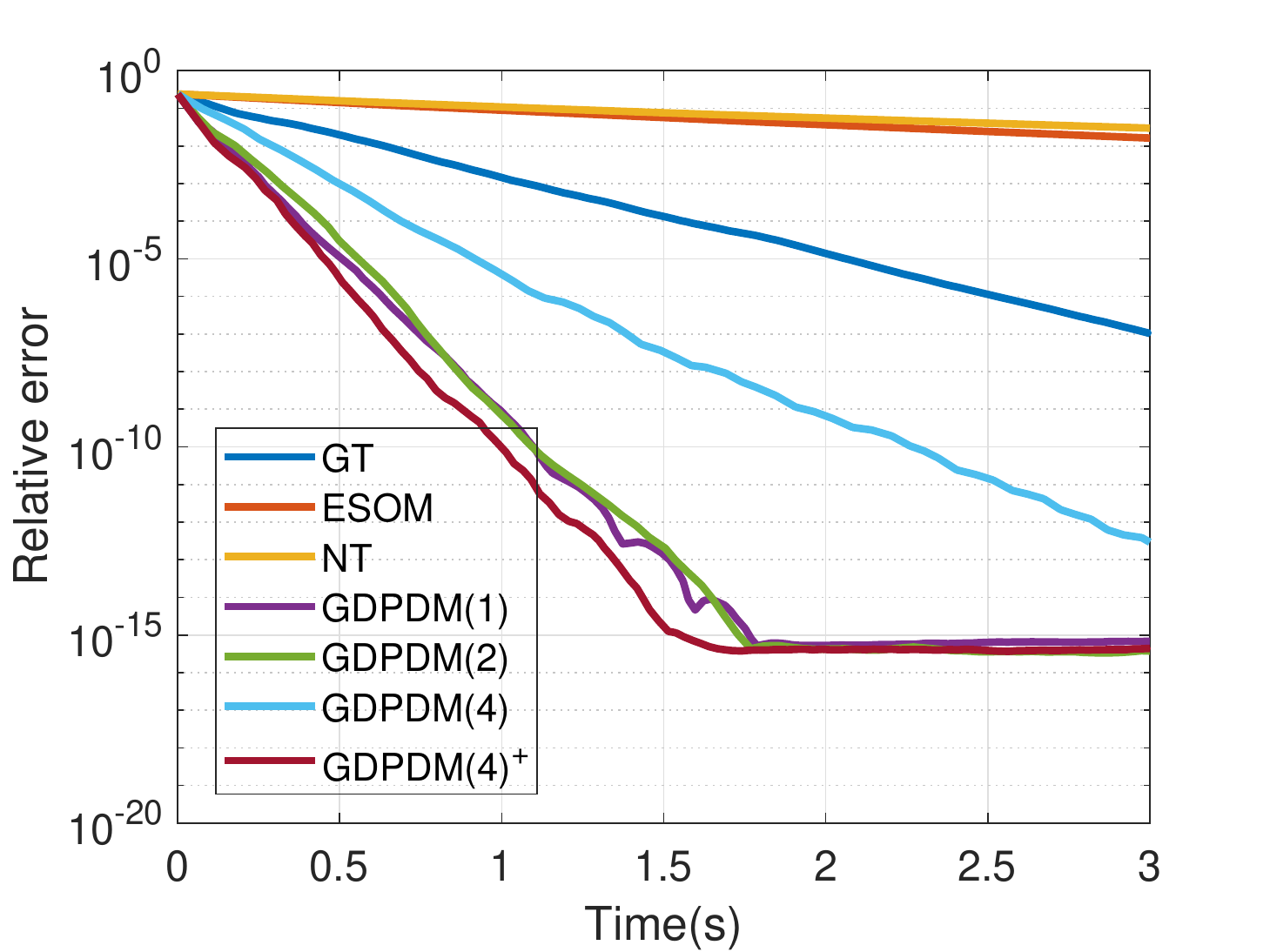}%
			\label{fig_8_c}}
		\hfil
		\subfloat[]{\includegraphics[width=3.5in]{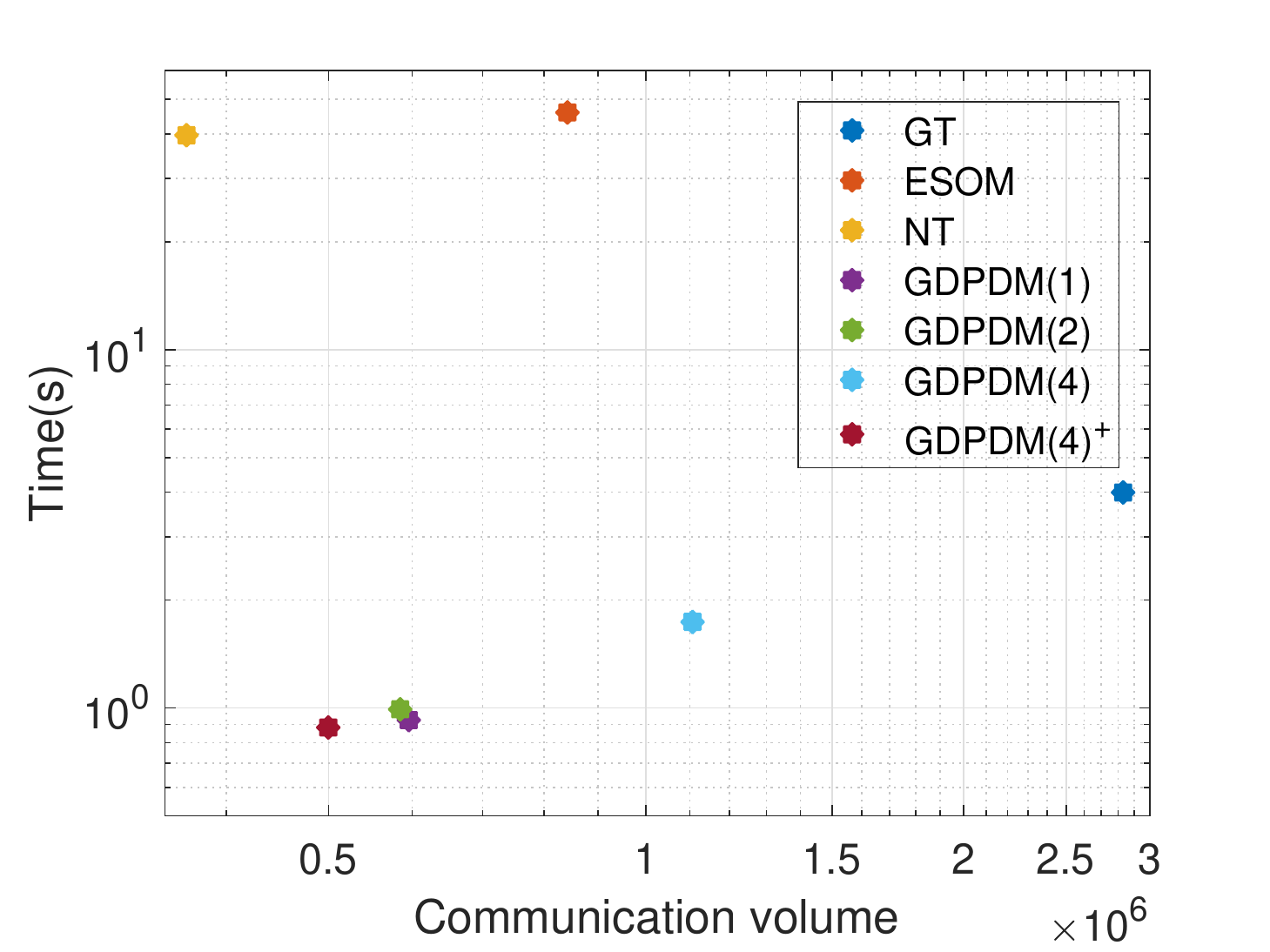}
			\label{fig_8_d}}
		\caption{(a-c) Comparisons with decentralized second-order algorithms in terms of the iteration number, communication volume, and CPU time (in seconds) using \textbf{a9a} dataset. (d) Balance between time and communication volume.}
		\label{a9a}
	\end{figure}
	
	We now compare our algorithms with several well-developed decentralized quasi-Newton algorithms: DBFGS \cite{eisen2017decentralized}, Damped regularized limited-memory DFP(called DR-LM-DFP below) \cite{zhang2023variance}, Damped limited-memory BFGS(called D-LM-BFGS below) \cite{zhang2023variance}.  All the algorithm parameters are set for their better performance and are listed in Table \ref{table3} where parameter notations follow the source papers.
		\begin{table}[H]
		\caption{Parameter settings for logistic regression with decentralized quasi-Newton algorithms}\label{table3}
		\centering
		\begin{tabular}{c|c|c|c}
			\hline
			&DBFGS  &DR-LM-DFP  &D-LM-BFGS  \\
			\hline
			\multirow{3}{*}{\textbf{ijcnn1}}&\multirow{3}{*}{\shortstack{$\alpha=0.01$, $\epsilon=0.02$,\\$\gamma=0.1$, $\Gamma=0.1$}}   &\multirow{3}{*}{\shortstack{$\alpha=0.04$, $\rho=0.03$, $\epsilon=10^{-3}$,\\$\beta=1$, $\C{B}=10^4$, $\tilde{L}=1$, $M=50$}}   &\multirow{3}{*}{\shortstack{$\alpha=0.48$, $\epsilon=10^{-3}$, $\beta=10^{-3}$,\\ $\C{B}=10^4$, $\tilde{L}=20$, $M=8$}}  \\
			& & &\\
			& & &\\
			\hline
			\multirow{3}{*}{\textbf{a9a}}&\multirow{3}{*}{\shortstack{$\alpha=0.01$, $\epsilon=0.03$,\\$\gamma=0.25$, $\Gamma=0.05$}}   &\multirow{3}{*}{\shortstack{$\alpha=0.04$, $\rho=0.03$, $\epsilon=5 \times 10^{-3}$,\\$\beta=10^{-2}$, $\C{B}=10^4$, $\tilde{L}=1$, $M=50$}}  &\multirow{3}{*}{\shortstack{$\alpha=0.45$, $\epsilon=10^{-3}$, $\beta=10^{-3}$,\\ $\C{B}=10^4$, $\tilde{L}=10$, $M=3$}}  \\
			& & &\\
			& & &\\
			\hline
		\end{tabular}
	\end{table}

	From figures \ref{quasi-ijcnn} and \ref{quasi-a9a}, we see that our algorithm is more efficient than other algorithms. The reason why DR-LM-DFP and D-LM-BFGS are very slow is that their quasi-Newton matrices are constructed using some significant regularization or damping techniques which could badly affect the approximation to the Hessian for capturing the second-order information. Furthermore, DBFGS is an inexact method which only converges to a small neighborhood of the solution.
	
	To sum up, numerical experiments show our new GDPDMs perform better than currently well-developed decentralized methods due to the applications of 
	quasi-Newton techniques to reduce the computational cost and the explorations of second-order information in both primal and dual updates to accelerate the convergence.

	\begin{figure}[H]
		\centering
		\subfloat[]{\includegraphics[width=3.5in]{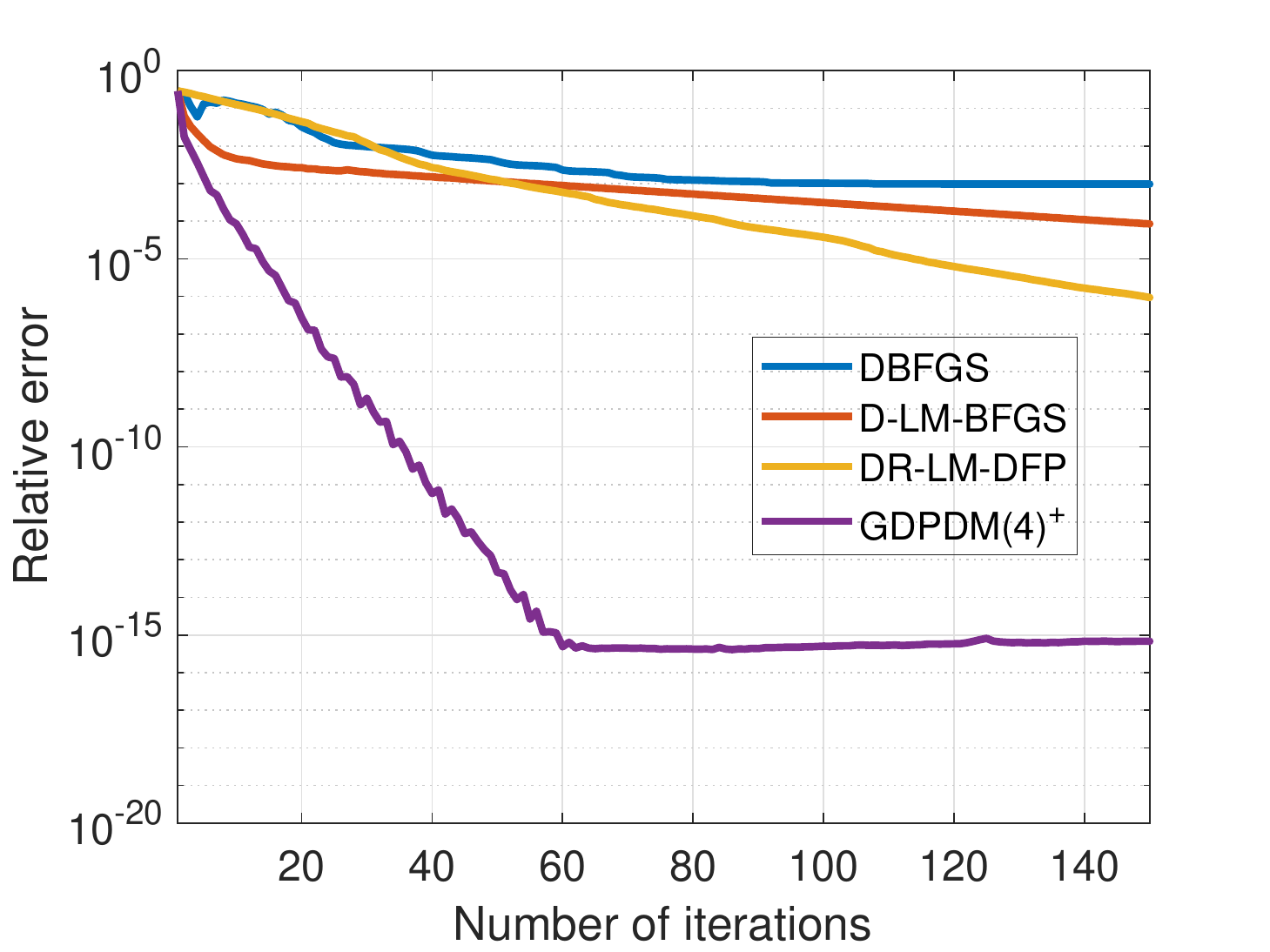}%
			\label{fig_9_a}}
		\hfil
		\subfloat[]{\includegraphics[width=3.5in]{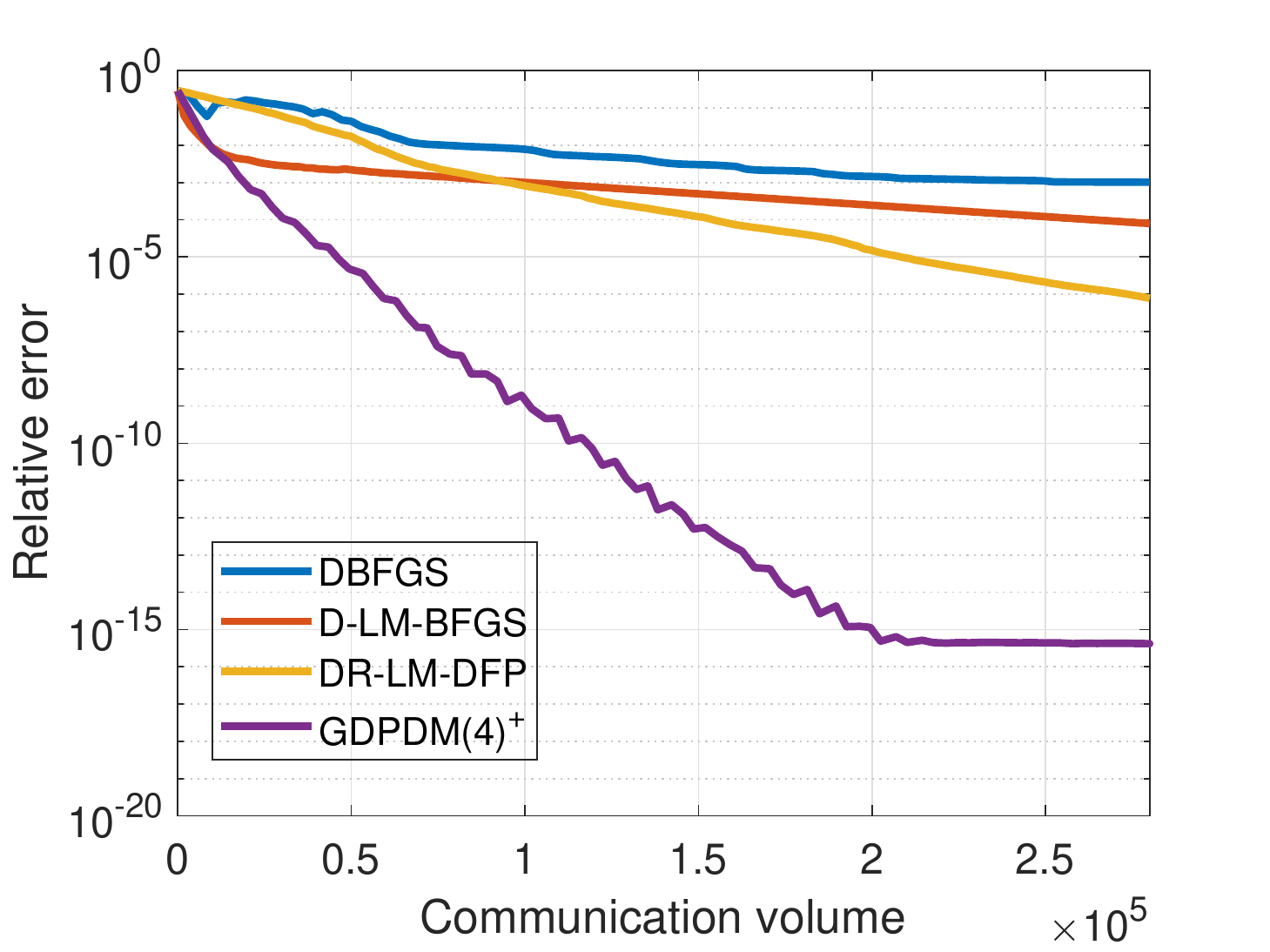}%
			\label{fig_9_b}}
		\hfil
		\subfloat[]{\includegraphics[width=3.5in]{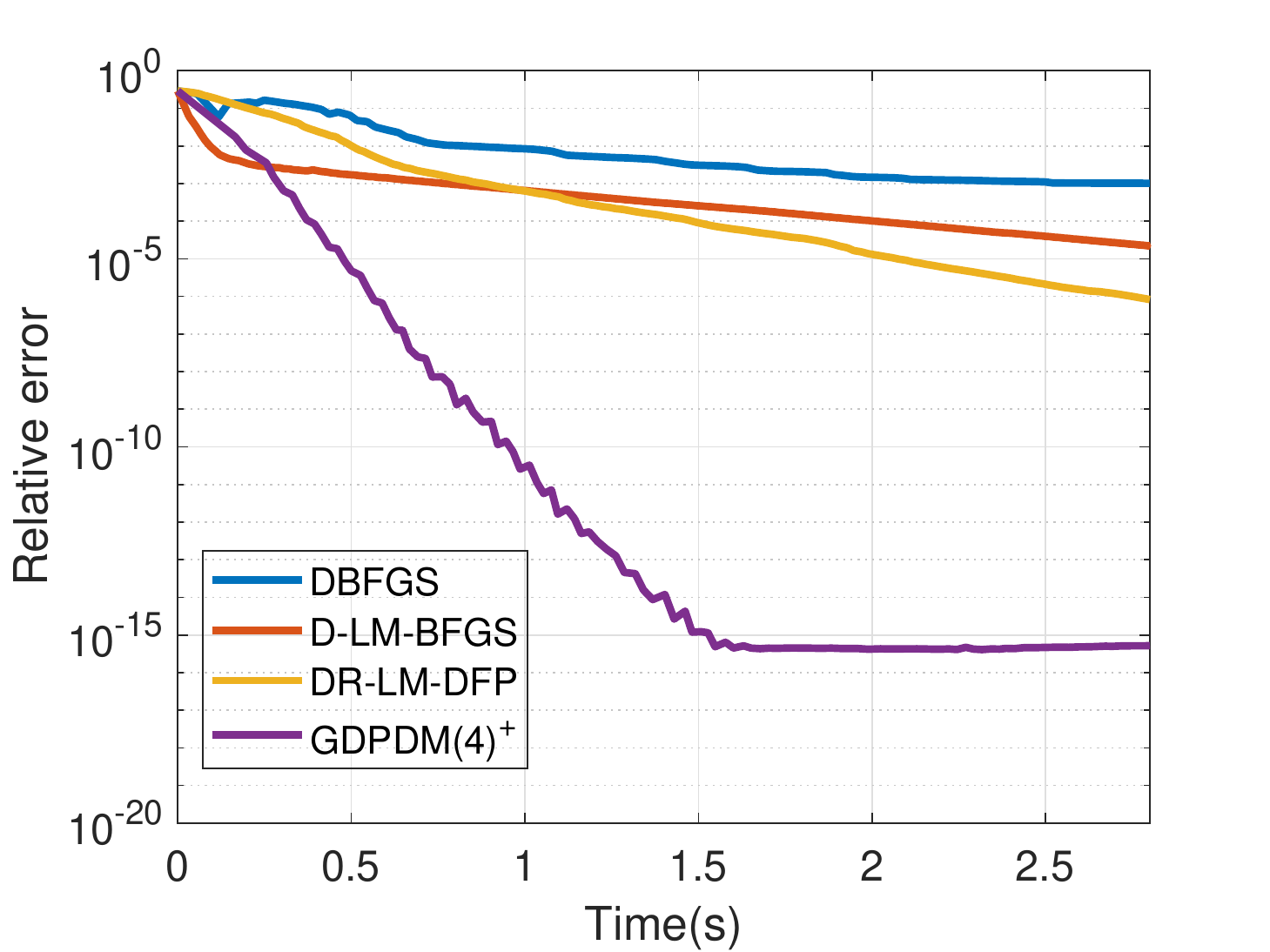}%
			\label{fig_9_c}}
		\caption{Comparisons with decentralized quasi-Newton algorithms in terms of the iteration number, communication volume, and CPU time (in seconds) using \textbf{ijcnn1} dataset.}
		\label{quasi-ijcnn}
	\end{figure}
	
	\begin{figure}[H]
		\centering
		\subfloat[]{\includegraphics[width=3.5in]{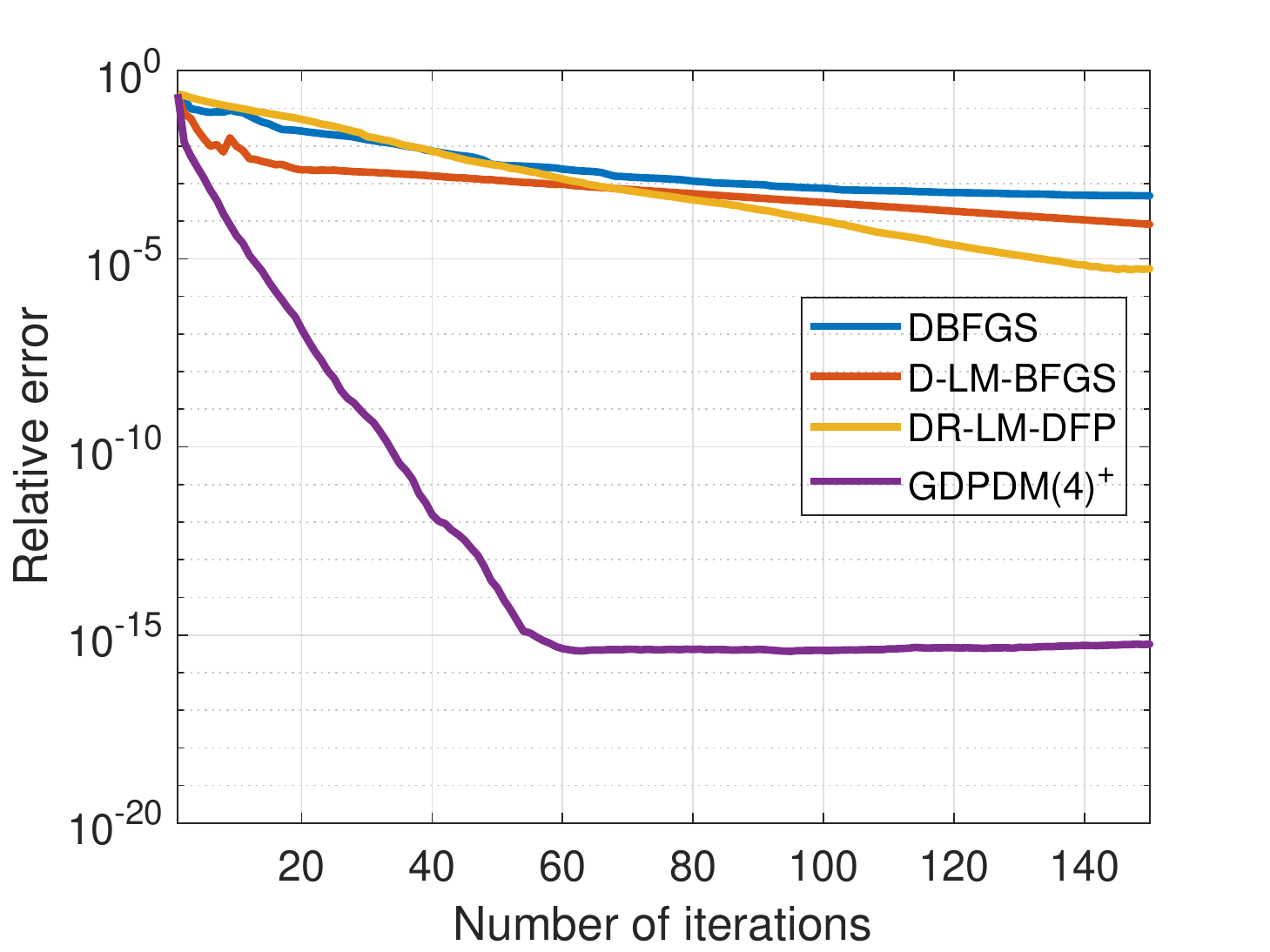}%
			\label{fig_10_a}}
		\hfil
		\subfloat[]{\includegraphics[width=3.5in]{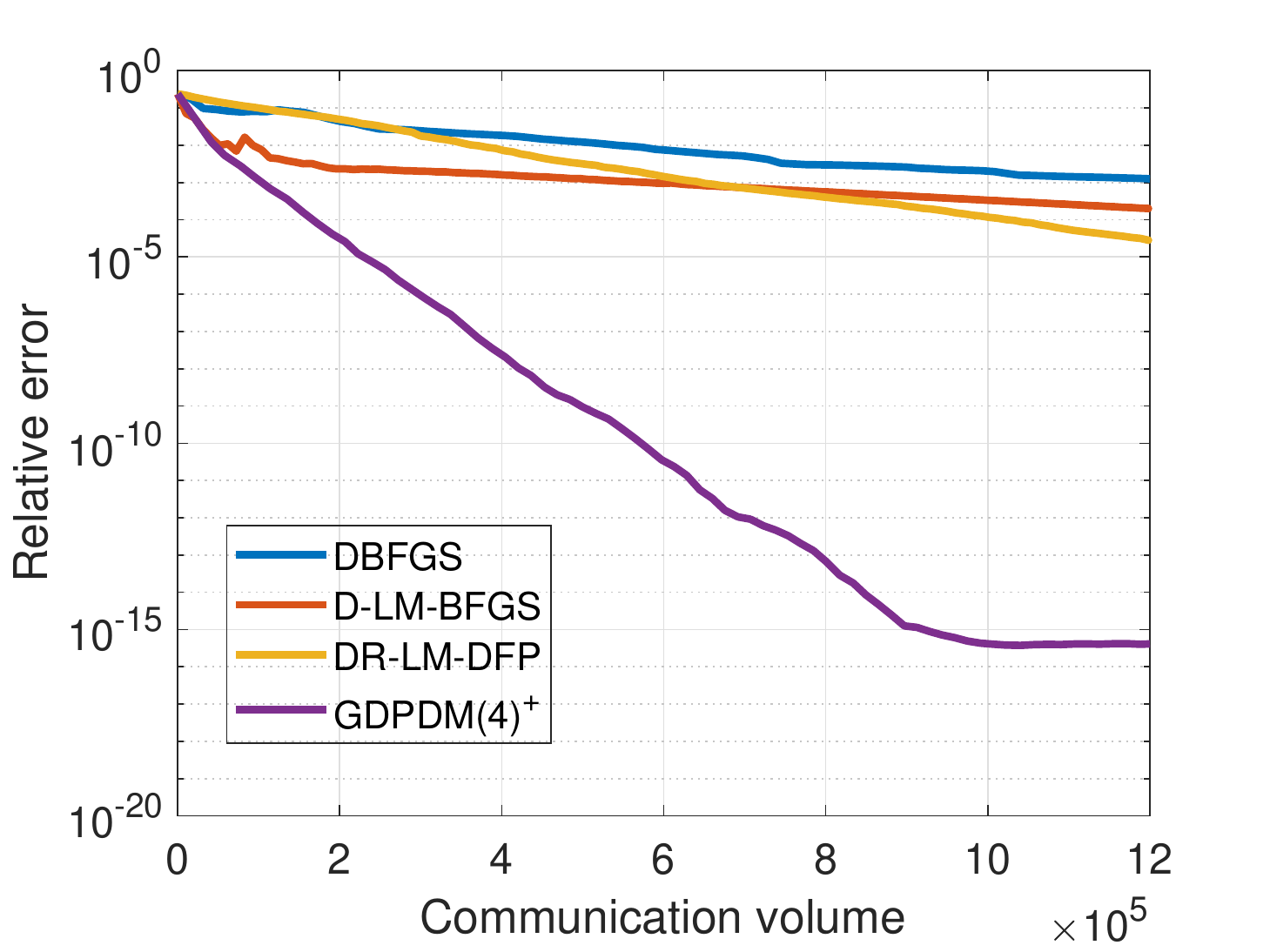}%
			\label{fig_10_b}}
		\hfil
		\subfloat[]{\includegraphics[width=3.5in]{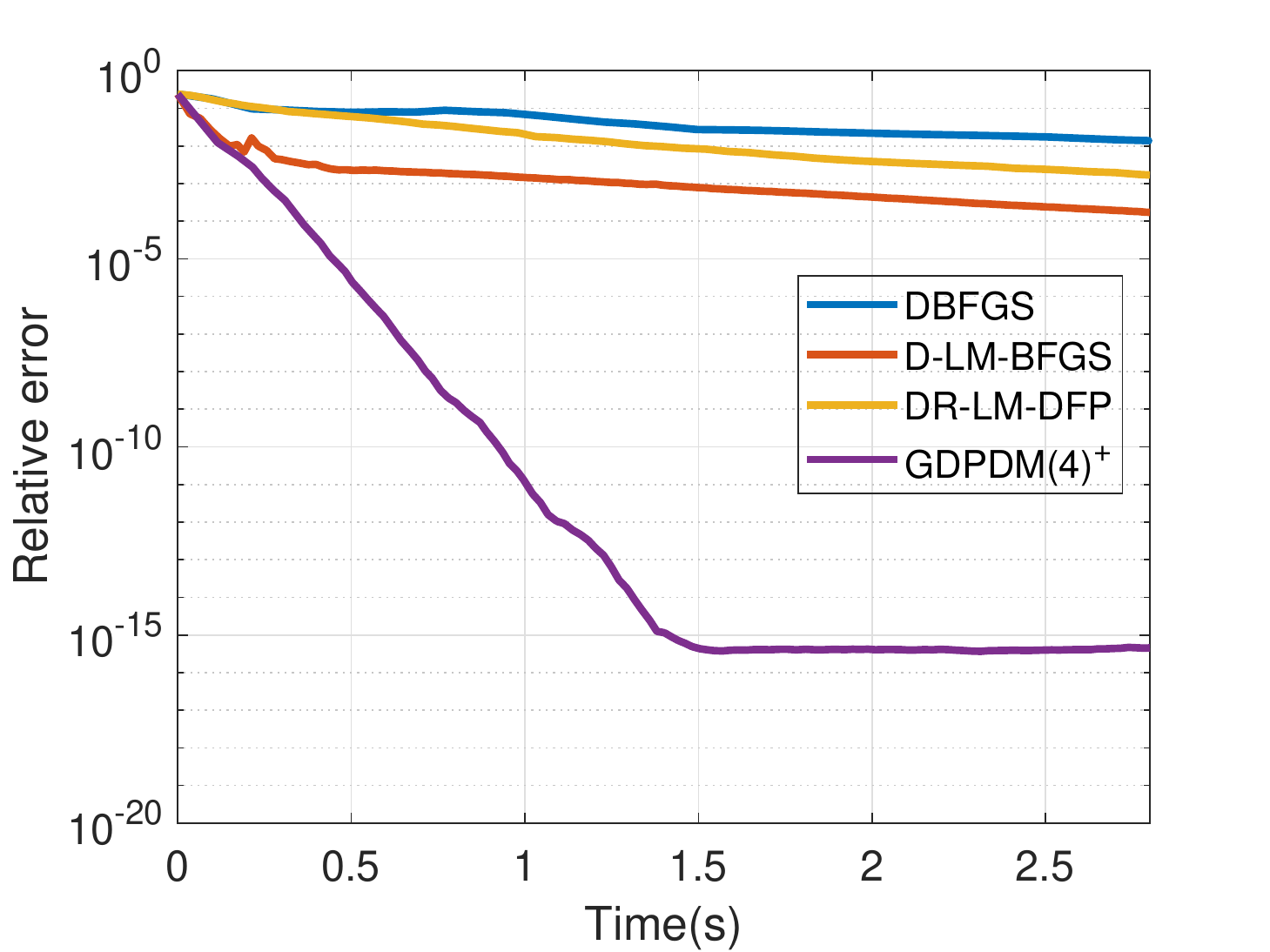}%
			\label{fig_10_c}}
		\caption{Comparisons with decentralized quasi-Newton algorithms in terms of the iteration number, communication volume, and CPU time (in seconds) using \textbf{a9a} dataset.}
		\label{quasi-a9a}
	\end{figure}
	
	\subsection{Effect of Network Topology}
	In the final subsection, we explore the impacts of the condition number of the network, denoted as $\kappa_g$, on our GDPDMs. 
	We choose GDPDM(1) as a demonstration and similar observations can be obtained for other GDPDMs.
	We conduct experiments on the linear regression problem \eqref{linear_problem} for different edge density $d= \{0.2, 0.36, 0.51, 0.67, 0.82, 1 \}$. 
	$d= 0.2$ corresponds to one of the sparsest networks (line graph) composed of 10 nodes, while $d=1$ corresponds to the complete graph. 
	For these mixing matrices with different edge density, $\kappa_g = \{39.8, 9.8, 4.2, 2.5, 1.9, 1.0\}$.
	
	We set $p=50$, $\kappa_f=100$. For $d =$ 0.2 (0.36; 0.51; 0.67; 0.82; 1), 
	we set $\omega =$ 0.28 (0.28; 0.26; 0.25; 0.26; 0.25) and $\beta=$ 0.1 (0.23; 0.42; 0.43; 0.46; 0.47). The remaining parameters are the same as those used in \textit{Linear Regression}.
	
	Figure \ref{graph} indicates the decentralized algorithm converges faster when the condition number of the network decreases, namely, the connectivity of the network becomes strong. 
	The complete graph whose condition number is 1 corresponds to the fastest convergence. However, the algorithm under the complete graph does not necessarily require the least communication volume 
	to reach a given relative error since the cost of each round of communication is also the highest. 
	It is important to note that real-world networks in applications, such as the Internet of Things and social networks, tend to be sparse.

	\begin{figure}[H]
		\centering
		\subfloat[]{\includegraphics[width=3.5in]{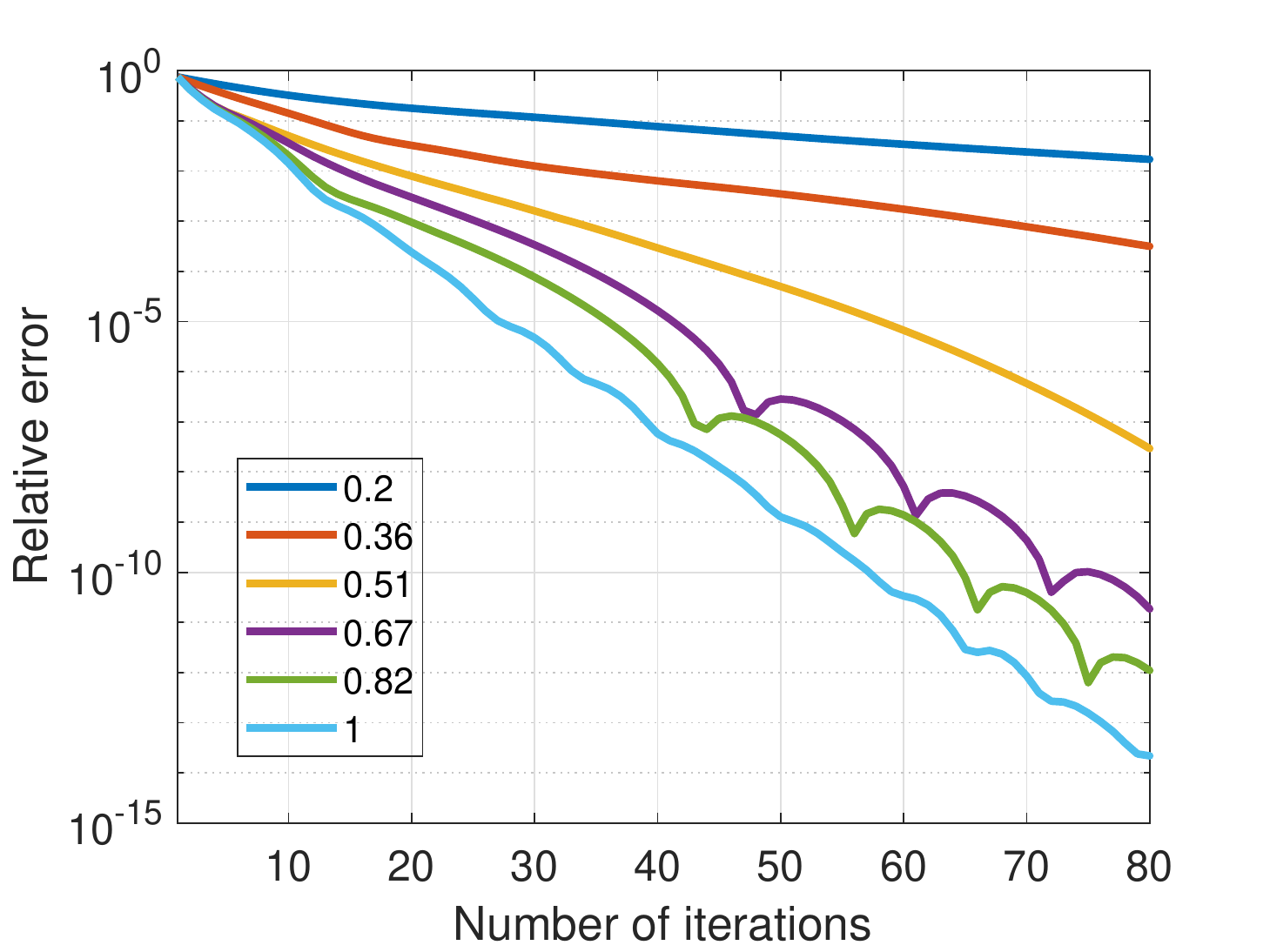}%
			\label{fig_12_a}}
		\hfil
		\subfloat[]{\includegraphics[width=3.5in]{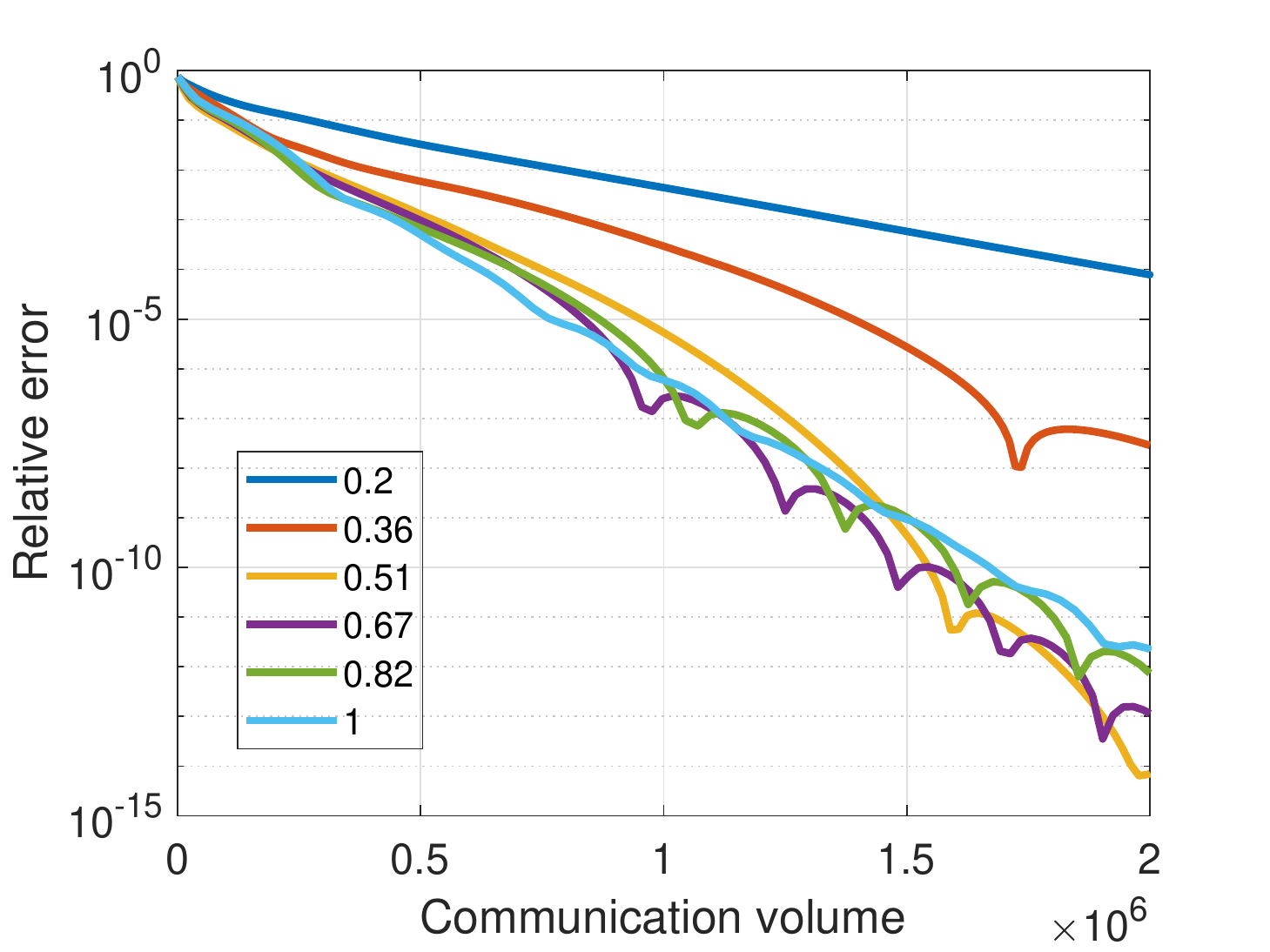}%
			\label{fig_12_b}}
		\caption{Relative error of GDPDM(1) versus the iteration number, communication volume for edge density 0.2, 0.36, 0.51, 0.67, 0.82 and 1.}
		\label{graph}
	\end{figure}
	
	\section{CONCLUSIONS}
	
	This paper considers a decentralized consensus optimization problem and proposes a decentralized primal-dual method(DPDM) and its generalization(GDPDM) with multiple primal updates per iteration. Both primal and dual updates effectively explore second-order information, where Hessian approximations are constructed with at most matrix-vector products. The single Jacobi loop with block-wise BFGS approximation is conducted in the primal domain. 
	Based on a new approximate dual gradient variation, the dual ascent step with a novel second-order correction is implemented in the dual domain. We show the update direction of each node asymptotically tends towards the global quasi-Newton direction. The relationship between GDPDM and some first or second-order methods is also established. 
	Under proper assumptions, GDPDMs have globally linear convergence. 
	The numerical results indicate that GDPDMs are not only very robust on the condition number of the problem,
	but also perform significantly better in terms of iteration number and CPU time than all the comparison decentralized methods, including EXTRA, GT, ESOM, NT, DBFGS, DR-LM-DFP, and D-LM-BFGS.

	%\printbibliography
	
	\bibliographystyle{IEEEtran}
	\bibliography{refs}

% Generated by IEEEtran.bst, version: 1.13 (2008/09/30)
\begin{thebibliography}{10}
\providecommand{\url}[1]{#1}
\csname url@samestyle\endcsname
\providecommand{\newblock}{\relax}
\providecommand{\bibinfo}[2]{#2}
\providecommand{\BIBentrySTDinterwordspacing}{\spaceskip=0pt\relax}
\providecommand{\BIBentryALTinterwordstretchfactor}{4}
\providecommand{\BIBentryALTinterwordspacing}{\spaceskip=\fontdimen2\font plus
\BIBentryALTinterwordstretchfactor\fontdimen3\font minus
  \fontdimen4\font\relax}
\providecommand{\BIBforeignlanguage}[2]{{%
\expandafter\ifx\csname l@#1\endcsname\relax
\typeout{** WARNING: IEEEtran.bst: No hyphenation pattern has been}%
\typeout{** loaded for the language `#1'. Using the pattern for}%
\typeout{** the default language instead.}%
\else
\language=\csname l@#1\endcsname
\fi
#2}}
\providecommand{\BIBdecl}{\relax}
\BIBdecl

\bibitem{fusco2021decentralized}
G.~Fusco and M.~Russo, ``A decentralized approach for voltage control by
  multiple distributed energy resources,'' \emph{IEEE Transactions on Smart
  Grid}, vol.~12, no.~4, pp. 3115--3127, 2021.

\bibitem{jeong2022asynchronous}
E.~Jeong, M.~Zecchin, and M.~Kountouris, ``Asynchronous decentralized learning
  over unreliable wireless networks,'' in \emph{2022 International Conference
  on Communications (ICC)}, 2022, pp. 607--612.

\bibitem{zhang2022distributed}
X.~Zhang, C.~Hu, B.~He, and Z.~Han, ``Distributed reptile algorithm for
  meta-learning over multi-agent systems,'' \emph{IEEE Transactions on Signal
  Processing}, vol.~70, pp. 5443--5456, 2022.

\bibitem{chen2020fully}
Z.~Chen, Z.~Li, C.~Guo, J.~Wang, and Y.~Ding, ``Fully distributed robust
  reserve scheduling for coupled transmission and distribution systems,''
  \emph{IEEE Transactions on Power Systems}, vol.~36, no.~1, pp. 169--182,
  2020.

\bibitem{pillutla2022robust}
K.~Pillutla, S.~M. Kakade, and Z.~Harchaoui, ``Robust aggregation for federated
  learning,'' \emph{IEEE Transactions on Signal Processing}, vol.~70, pp.
  1142--1154, 2022.

\bibitem{nedic2009distributed}
A.~Nedi{\'c} and A.~Ozdaglar, ``Distributed subgradient methods for multi-agent
  optimization,'' \emph{IEEE Transactions on Automatic Control}, vol.~54,
  no.~1, pp. 48--61, 2009.

\bibitem{yuan2016convergence}
K.~Yuan, Q.~Ling, and W.~Yin, ``On the convergence of decentralized gradient
  descent,'' \emph{SIAM Journal on Optimization}, vol.~26, no.~3, pp.
  1835--1854, 2016.

\bibitem{zeng2018nonconvex}
J.~Zeng and W.~Yin, ``On nonconvex decentralized gradient descent,'' \emph{IEEE
  Transactions on Signal Processing}, vol.~66, no.~11, pp. 2834--2848, 2018.

\bibitem{berahas2018balancing}
A.~S. Berahas, R.~Bollapragada, N.~S. Keskar, and E.~Wei, ``Balancing
  communication and computation in distributed optimization,'' \emph{IEEE
  Transactions on Automatic Control}, vol.~64, no.~8, pp. 3141--3155, 2018.

\bibitem{shi2015extra}
W.~Shi, Q.~Ling, G.~Wu, and W.~Yin, ``Extra: An exact first-order algorithm for
  decentralized consensus optimization,'' \emph{SIAM Journal on Optimization},
  vol.~25, no.~2, pp. 944--966, 2015.

\bibitem{li2020revisiting}
H.~Li and Z.~Lin, ``Revisiting extra for smooth distributed optimization,''
  \emph{SIAM Journal on Optimization}, vol.~30, no.~3, pp. 1795--1821, 2020.

\bibitem{li2019decentralized}
Z.~Li, W.~Shi, and M.~Yan, ``A decentralized proximal-gradient method with
  network independent step-sizes and separated convergence rates,'' \emph{IEEE
  Transactions on Signal Processing}, vol.~67, no.~17, pp. 4494--4506, 2019.

\bibitem{yuan2018exact}
K.~Yuan, B.~Ying, X.~Zhao, and A.~H. Sayed, ``Exact diffusion for distributed
  optimization and learning-part i: Algorithm development,'' \emph{IEEE
  Transactions on Signal Processing}, vol.~67, no.~3, pp. 708--723, 2018.

\bibitem{xu2015augmented}
J.~Xu, S.~Zhu, Y.~C. Soh, and L.~Xie, ``Augmented distributed gradient methods
  for multi-agent optimization under uncoordinated constant stepsizes,'' in
  \emph{2015 IEEE Conference on Decision and Control (CDC)}, 2015, pp.
  2055--2060.

\bibitem{qu2017harnessing}
G.~Qu and N.~Li, ``Harnessing smoothness to accelerate distributed
  optimization,'' \emph{IEEE Transactions on Control of Network Systems},
  vol.~5, no.~3, pp. 1245--1260, 2017.

\bibitem{nedic2017achieving}
A.~Nedi{\'c}, A.~Olshevsky, and W.~Shi, ``Achieving geometric convergence for
  distributed optimization over time-varying graphs,'' \emph{SIAM Journal on
  Optimization}, vol.~27, no.~4, pp. 2597--2633, 2017.

\bibitem{zhang2020distributed}
J.~Zhang, K.~You, and K.~Cai, ``Distributed dual gradient tracking for resource
  allocation in unbalanced networks,'' \emph{IEEE Transactions on Signal
  Processing}, vol.~68, pp. 2186--2198, 2020.

\bibitem{song2023optimal}
Z.~Song, L.~Shi, S.~Pu, and M.~Yan, ``Optimal gradient tracking for
  decentralized optimization,'' \emph{Mathematical Programming}, pp. 1--53,
  2023.

\bibitem{jakovetic2018unification}
D.~Jakoveti{\'c}, ``A unification and generalization of exact distributed
  first-order methods,'' \emph{IEEE Transactions on Signal and Information
  Processing over Networks}, vol.~5, no.~1, pp. 31--46, 2018.

\bibitem{mansoori2021flexpd}
F.~Mansoori and E.~Wei, ``Flexpd: A flexible framework of first-order
  primal-dual algorithms for distributed optimization,'' \emph{IEEE
  Transactions on Signal Processing}, vol.~69, pp. 3500--3512, 2021.

\bibitem{zhu2016quantized}
S.~Zhu, M.~Hong, and B.~Chen, ``Quantized consensus admm for multi-agent
  distributed optimization,'' in \emph{2016 IEEE International Conference on
  Acoustics, Speech and Signal Processing (ICASSP)}.\hskip 1em plus 0.5em minus
  0.4em\relax IEEE, 2016, pp. 4134--4138.

\bibitem{zhang2018admm}
C.~Zhang, M.~Ahmad, and Y.~Wang, ``Admm based privacy-preserving decentralized
  optimization,'' \emph{IEEE Transactions on Information Forensics and
  Security}, vol.~14, no.~3, pp. 565--580, 2018.

\bibitem{mancino2023decentralized}
G.~Mancino-Ball, Y.~Xu, and J.~Chen, ``A decentralized primal-dual framework
  for non-convex smooth consensus optimization,'' \emph{IEEE Transactions on
  Signal Processing}, vol.~71, pp. 525--538, 2023.

\bibitem{mokhtari2016network}
A.~Mokhtari, Q.~Ling, and A.~Ribeiro, ``Network newton distributed optimization
  methods,'' \emph{IEEE Transactions on Signal Processing}, vol.~65, no.~1, pp.
  146--161, 2016.

\bibitem{bajovic2017newton}
D.~Bajovic, D.~Jakovetic, N.~Krejic, and N.~K. Jerinkic, ``Newton-like method
  with diagonal correction for distributed optimization,'' \emph{SIAM Journal
  on Optimization}, vol.~27, no.~2, pp. 1171--1203, 2017.

\bibitem{eisen2017decentralized}
M.~Eisen, A.~Mokhtari, and A.~Ribeiro, ``Decentralized quasi-newton methods,''
  \emph{IEEE Transactions on Signal Processing}, vol.~65, no.~10, pp.
  2613--2628, 2017.

\bibitem{mokhtari2016decentralized}
A.~Mokhtari, W.~Shi, Q.~Ling, and A.~Ribeiro, ``A decentralized second-order
  method with exact linear convergence rate for consensus optimization,''
  \emph{IEEE Transactions on Signal and Information Processing over Networks},
  vol.~2, no.~4, pp. 507--522, 2016.

\bibitem{eisen2019primal}
M.~Eisen, A.~Mokhtari, and A.~Ribeiro, ``A primal-dual quasi-newton method for
  exact consensus optimization,'' \emph{IEEE Transactions on Signal
  Processing}, vol.~67, no.~23, pp. 5983--5997, 2019.

\bibitem{zhang2021newton}
J.~Zhang, Q.~Ling, and A.~M.-C. So, ``A newton tracking algorithm with exact
  linear convergence for decentralized consensus optimization,'' \emph{IEEE
  Transactions on Signal and Information Processing over Networks}, vol.~7, pp.
  346--358, 2021.

\bibitem{li2022communication}
Y.~Li, P.~G. Voulgaris, and N.~M. Freris, ``A communication efficient
  quasi-newton method for large-scale distributed multi-agent optimization,''
  in \emph{2022 IEEE International Conference on Acoustics, Speech and Signal
  Processing (ICASSP)}.\hskip 1em plus 0.5em minus 0.4em\relax IEEE, 2022, pp.
  4268--4272.

\bibitem{mokhtari2016dqm}
A.~Mokhtari, W.~Shi, Q.~Ling, and A.~Ribeiro, ``Dqm: Decentralized
  quadratically approximated alternating direction method of multipliers,''
  \emph{IEEE Transactions on Signal Processing}, vol.~64, no.~19, pp.
  5158--5173, 2016.

\bibitem{zhang2023decentralized}
Z.~Zhang, S.~Yang, and W.~Xu, ``Decentralized admm with compressed and
  event-triggered communication,'' \emph{Neural Networks}, 2023.

\bibitem{zhang2023variance}
J.~Zhang, H.~Liu, A.~M.-C. So, and Q.~Ling, ``Variance-reduced stochastic
  quasi-newton methods for decentralized learning,'' \emph{IEEE Transactions on
  Signal Processing}, vol.~71, pp. 311--326, 2023.

\bibitem{sayed2014diffusion}
A.~H. Sayed, ``Diffusion adaptation over networks,'' in \emph{Academic Press
  Library in Signal Processing}, 2014, vol.~3, pp. 323--453.

\bibitem{xiao2007distributed}
L.~Xiao, S.~Boyd, and S.-J. Kim, ``Distributed average consensus with
  least-mean-square deviation,'' \emph{Journal of parallel and distributed
  computing}, vol.~67, no.~1, pp. 33--46, 2007.

\bibitem{pillai2005perron}
S.~U. Pillai, T.~Suel, and S.~Cha, ``The perron-frobenius theorem: some of its
  applications,'' \emph{IEEE Signal Processing Magazine}, vol.~22, no.~2, pp.
  62--75, 2005.

\bibitem{young2014iterative}
D.~M. Young, \emph{Iterative solution of large linear systems}.\hskip 1em plus
  0.5em minus 0.4em\relax Elsevier, 2014.

\bibitem{yuan2014analysis}
Y.~Yuan, ``Analysis on a superlinearly convergent augmented lagrangian
  method,'' \emph{Acta Mathematica Sinica, English Series}, vol.~30, no.~1, pp.
  1--10, 2014.

\bibitem{barzilai1988two}
J.~Barzilai and J.~M. Borwein, ``Two-point step size gradient methods,''
  \emph{IMA journal of numerical analysis}, vol.~8, no.~1, pp. 141--148, 1988.

\bibitem{zhu2010discrete}
M.~Zhu and S.~Mart{\'\i}nez, ``Discrete-time dynamic average consensus,''
  \emph{Automatica}, vol.~46, no.~2, pp. 322--329, 2010.

\end{thebibliography}

	\end{document}